\documentclass[10pt]{amsart}

\setlength{\textwidth}{6in} \setlength{\textheight}{8.0in} \setlength{\oddsidemargin}{0.2in}\setlength{\evensidemargin}{0.2in}

\usepackage{amssymb}
\usepackage{verbatim} 
\usepackage[colorlinks=true, citecolor=black, filecolor=black, linkcolor=black, urlcolor=black]{hyperref}
\usepackage{graphicx}
\usepackage{enumerate}
\usepackage[table,xcdraw]{xcolor}
\usepackage{caption}
\usepackage{todonotes}
\usepackage{etoolbox}
\captionsetup[table]{skip=10pt}

\include{toc}

\def\bfs{\boldsymbol}

\newcommand{\C}{\mathbb{C}}
\newcommand{\Cx}{\widehat{\C}}
\newcommand{\R}{\mathbb{R}}

\renewcommand{\H}{\mathbb{H}}

\renewcommand{\Re}{\operatorname{Re}}
\renewcommand{\Im}{\operatorname{Im}}

\newcommand{\Rd}{\R_d}

\newcommand{\metg}{\textsl{g}}

\newcommand{\mb}[1]{\mathbf{#1}}
\newcommand{\mc}[1]{\mathcal{#1}}

\newcommand{\dell}{\partial}

\newcommand{\supp}{\operatorname{supp}}

\newcommand{\hcap}{\operatorname{hcap}}
\newcommand{\pre}{\textrm{pre}}

\newcommand{\LieL}{\mathcal{L}}

\newcommand{\CRR}{\operatorname{CRR}}
\newcommand{\PSL}{\operatorname{PSL}}

\newcommand{\NV}{\mathrm{NV}}
\newcommand{\Ham}{\mc{H}}

\newcommand{\1}[1]{\mathbf{1} \left \{ #1 \right \}}
\newcommand{\diag}{\operatorname{diag}}

\theoremstyle{plain}
\numberwithin{equation}{section}
\newtheorem{thm}{Theorem}[section]
\newtheorem{lem}[thm]{Lemma}
\newtheorem{cor}[thm]{Corollary}
\newtheorem{prop}[thm]{Proposition}

\theoremstyle{definition}
\newtheorem{eg}{Example}[section]

\newtheorem*{eg*}{Example}
\newtheorem*{egs*}{Examples}
\newtheorem*{def*}{Definition}

\theoremstyle{remark}
\newtheorem*{rmk*}{Remark}
\newtheorem*{rmks*}{Remarks}
\newtheorem*{q*}{Question}

\begin{document}

\title{Pole Dynamics and an Integral of Motion for Multiple SLE(0)}

\author{Tom Alberts}
\address{Department of Mathematics, University of Utah, Salt Lake City, UT 84112, USA} 
\email{alberts@math.utah.edu}
\thanks{Tom Alberts was supported by NSF grants DMS-1811087 and DMS-1715680.}

\author{Sung-Soo Byun}
\address{School of Mathematics, Korea Institute for Advanced Study, Seoul, 02455, Republic of Korea} 
\email{sungsoobyun@kias.re.kr}

\author{Nam-Gyu Kang} 
\address{School of Mathematics, Korea Institute for Advanced Study, Seoul, 02455, Republic of Korea} 
\email{namgyu@kias.re.kr} 
\thanks{Nam-Gyu Kang was partially supported by a KIAS Individual Grant (MG058103) at Korea Institute for Advanced Study. 
Sung-Soo Byun and Nam-Gyu Kang were partially supported by Samsung Science and Technology Foundation (SSTF-BA1401-51), and National Research Foundation of Korea under grant number NRF-2019R1A5A1028324.}

\author{Nikolai~G.~Makarov}
\address{Department of Mathematics, California Institute of Technology, Pasadena, CA 91125, USA} 
\email{makarov@caltech.edu}
\thanks{Nikolai~G.~Makarov was supported by NSF grant no. 1500821.}

\subjclass[2020]{Primary 30C15; Secondary 60J67, 81T40}
\keywords{rational functions, geodesic multichord, integral of motion, SLE, Calogero-Moser}

\begin{abstract}
We describe the Loewner chains of the real locus of a class of real rational functions whose critical points are on the real line. Our main result is that the poles of the rational function lead to explicit formulas for the dynamical system that governs the driving functions. Our formulas give a simple method for mapping the class of rational functions into solutions to a non-trivial system of quadratic equations, and for directly showing that the curves in the real locus satisfy \textit{geometric commutation} and have the \textit{geodesic multichord} property. These results are entirely self-contained and have no reliance on probabilistic objects, but make use of an integral of motion for the Loewner chain that is motivated by ideas from conformal field theory. We also show that the dynamics of the driving functions are a special case of the Calogero-Moser integrable system, restricted to a particular submanifold of phase space carved out by the Lax matrix. Our approach complements a recent result of Peltola and Wang, who showed that the real locus is the deterministic $\kappa \to 0$ limit of the multiple SLE$(\kappa)$ curves.
\end{abstract}

\vspace*{-1.3cm}
\maketitle
\vspace*{-1cm}
\tableofcontents

\section{Introduction \label{sec: intro}}

Recently, Peltola and Wang \cite{PW20} introduced and made a detailed study of \textbf{multiple SLE}$(0)$: a certain ensemble of $n$ simple curves in a simply connected subset of the Riemann sphere that connect $2n$ specified boundary points according to a non-crossing link pattern. Although the embedding of the curves is purely deterministic and fully determined by the domain, the boundary points, and the chosen link pattern, the name multiple SLE$(0)$ comes from the fact (which Peltola and Wang prove) that the ensemble of curves is the deterministic limit of the \textit{random} \textbf{multiple SLE}$(\kappa)$ process \cite{BBK05, Dubedat07, LK:configurational_measure} as $\kappa \to 0$. As a result the multiple SLE$(0)$ curves have many properties that are natural analogues of their multiple SLE$(\kappa)$ counterparts: \textbf{conformal invariance}, a description via \textbf{Loewner flow}, and an important geometric characterization called the \textbf{geodesic multichord property}. However, Peltola and Wang also show that the multiple SLE$(0)$ curves have a property which has no analogue in the random world: the curves are exactly the non-trivial part of the \textbf{real locus} of a \textbf{real rational function} whose critical points are all on the line. In particular this implies that the SLE$(0)$ curves are solutions to \textit{algebraic} equations, which is an impossible feat for their highly fractal SLE$(\kappa)$ cousins. 

In this paper we exploit the connection with rational functions to give two new descriptions of the Loewner chains that grow multiple SLE$(0)$ curves. We give explicit, algebraic formulas for the dynamical system that governs the \textbf{driving functions} of the Loewner chain. We express the dynamical system in two ways: as a first order evolution for the \textbf{poles} and \textbf{critical points} of the rational function, and as a second order evolution that only involves the critical points. The first order evolution is a natural extension of the well-known \textbf{SLE}$(\kappa, \bfs \rho)$ dynamics to the case $\kappa = 0$, while the second order evolution is a special case of the famous \textbf{Calogero-Moser} system. SLE$(\kappa, \bfs \rho)$ processes were introduced in \cite{LSW:restriction} to study boundaries of a particular family of random two-dimensional sets and are now ubiquitous in the theory of conformally invariant random systems, while Calogero-Moser \cite{Calogero:quantum_CM_integrable, Moser:CM_integrable} is a famous Hamiltonian system that describes the motion of particles interacting on the line via an inverse square pairwise potential. It is well known for being \textbf{integrable} with an explicit \textbf{Lax pair}, and to the best of our knowledge this paper is the first to prove a connection between Calogero-Moser dynamics and the Loewner evolution of SLE processes. 

We commonly describe the curves in this paper as multiple SLE$(0)$s, but we emphasize that our results are entirely self-contained within the theory of rational functions and Loewner evolution. We do not rely on properties of multiple SLE$(\kappa)$ or other tools coming from the theory of two-dimensional conformally invariant random systems. This is in contrast to the Peltola and Wang approach that uses the \textbf{Brownian loop measure} to describe the Loewner flow of the multiple SLE$(0)$ curves. While beautiful, computations involving Brownian loop measure can be involved or impractical. Our explicit formulas for the driving function's dynamical system allow us to prove several properties of multiple SLE$(0)$ curves in a direct manner. We show that each individual curve in the ensemble is an SLE$(0, \bfs \rho)$, where the $\bfs \rho$ term encodes the locations of the poles and critical points of the rational function and how they influence the evolution of the Loewner chain's driving function. See Theorem \ref{thm: real_locus} and equation \eqref{eq: U} for the explicit formula. Moreover we show that this family of Loewner chains (one for the growth of each curve in the ensemble) satisfies a \textbf{commutation} property. Loosely speaking, this means that we can grow the curves by starting and stopping their individual Loewner chains in any desired order and yet the hull that is produced is always a subset of a single fixed set. We also show, in just a few short lines, that our formulas for the first order driving function dynamics are solutions to the system of quadratic equations that we call the \textbf{null vector equations}. This system is also the $\kappa \to 0$ limit of the \textbf{Belavin-Polyakov-Zamolodchikov (BPZ)} equations that appear in conformal field theory. We also directly show that the real locus of a real rational function with real critical points satisfies the geodesic multichord property. In a sense our argument is a converse to the approach of Peltola and Wang, who use a Schwarz reflection argument to show that an ensemble of curves having the geodesic multichord property (and satisfying mild additional assumptions) must be the real locus of a certain type of real rational function with real critical points. A crucial component underlying all of these results is a new \textbf{field of integral of motion} for SLE$(0, \bfs \rho)$ processes, meaning that we have a preserved quantity for each $z \in \H$. 

The structure of this paper is as follows. A detailed description of our results is given in Section \ref{sec: results}. We also introduce the SLE$(0, \bfs \rho)$ processes and explain how they are a natural extension of the SLE$(\kappa, \bfs \rho)$ processes for $\kappa > 0$. In Section \ref{sec:IoM} we prove the field integral of motion property for SLE$(0, \bfs \rho)$ processes, and explain the condition on $\bfs \rho$ that gives conformal invariance of the family of processes. Section \ref{sec: RRF_Loewner} establishes an important algebraic relationship between poles and critical points that characterizes the class of rational functions under consideration. We refer to this relationship as the \textbf{stationary relation}. In the dynamical context of Loewner evolution and Calogero-Moser the stationary relation translates into a requirement of very specific initial conditions for the particles. It also suggests a new approach for enumerating equivalence classes (under post-composition by M\"{o}bius transforms) of real rational functions with prescribed critical points on the real line. Section \ref{sec: RRF_Loewner} also uses the field integral of motion from Section \ref{sec:IoM} to prove that our Loewner chains generate the real locus of the real rational function associated to a solution of the stationary relation. In Section \ref{sec: RRF_Loewner} we also prove the geodesic multichord property for the real locus, and that the formulas for our driving function dynamics are solutions to the quadratic null vector equations. In Section \ref{sec:locus_geodesic} we briefly recall existing theory that associates a vector field to each rational function and describes the real locus as the flow lines of this vector field. Although this theory is well known, we include it as a contrast to our new results on Loewner chains for the real locus.

Section \ref{sec: commutation} is essentially separate from the rest of the paper. It gives a self-contained explanation for the appearance of the null vector equations in this theory that does not involve $\kappa \to 0$ limits of the BPZ equations. The main result is that the null vector equations are a necessary algebraic relationship imposed on the dynamics of each individual curve's driving function in order for the entire system of curves to satisfy the commutation property and be conformally invariant. In this sense Section \ref{sec: commutation} is an intrinsic geometric characterization of the null vector equations. Section \ref{sec: CM} establishes the connection between our Loewner chains and the Calogero-Moser system. We show that the Calogero-Moser dynamics have to be restricted to a particular submanifold of phase space in order to match the first order dynamics of the Loewner driving function. This submanifold is carved out by a formula that expresses the null vector equations in terms of the Lax pair of the Calogero-Moser system. We also use the Calogero-Moser description to derive a partial differential equation for the evolution of a rational function under our Loewner evolution.

In Section \ref{sec: examples} we give examples of real rational functions and the formulas for our driving function dynamics in the cases $n=1,$ $n=2$, and $n=3$, i.e. two, four,  and six marked points on the real line, respectively. In the $n=1$ case this is simple since there is only one way of connecting the two boundary points. For $n=2$, in which there are two ways to connect the four boundary points, we use our results to find explicit formulas for the solutions to the null vector equations and for the curves that make up the two multiple SLE$(0)$ ensembles. We also show that our formulas agree with the $\kappa \to 0$ limits of the so-called \textit{pure} solutions to the BPZ equations, which are explicitly known for $n=2$.  For $n=3$, there are five link patterns, of which we discuss three symmetric patterns.

Finally, in Section \ref{sec: conclusion} we explain how our results are motivated by heuristics coming from conformal field theory. Slight modifications of the same heuristic explain both the new formulas for the Loewner driving function dynamics that are the cornerstone of this paper, and the classical description of the real locus as the flow lines of a vector field. We also use this heuristic to explain (non-rigorously) how the classical flow line description is a $\kappa \to 0$ limit of the Miller-Sheffield \textbf{imaginary geometry}  \cite{MS16:imaginary1, MS16:imaginary2, MS16:imaginary3, MS16:imaginary4}.

\vspace{.1in}
\noindent
\textbf{Acknowledgements: } We thank Eveliina Peltola and Yilin Wang for several helpful discussions about their paper \cite{PW20}. We also thank Alexander Abanov, Ilya Gruzberg, and Pavel Wiegmann for pointing out the connection to \cite{ABW09, AGK11}. This material is based upon work supported by the National Science Foundation under Grant No. DMS-1928930, while the authors participated in a program hosted by the Mathematical Sciences Research Institute in Berkeley, California, during the Spring 2022 semester.

\section{Formulation of Main Results \label{sec: results}}

We begin this section by briefly reviewing the basic properties of multiple SLE$(0)$ proved in \cite{PW20} and the relation with multiple SLE$(\kappa)$. We then state our main results, first by describing the Loewner chains for the real locus of critically real rational functions. The results are stated solely in the upper half-plane $\H$ equipped with the standard coordinate charts, and later we show that all results hold equally well in the coordinate charts on $\H$ induced by the real M\"{o}bius transformations of $\H$ to itself. Throughout we consider $n \geq 1$ and points $\bfs x = \{x_1, \ldots, x_{2n} \}$ that are real and distinct. A link pattern for $\bfs x$ is a pairing
\[
\alpha = \{ \{ a_1, b_1 \}, \ldots, \{ a_n, b_n \} \}, 
\]
such that each $a_i, b_i$ is an element of $\bfs x$, every element of $\bfs x$ appears exactly once, and there exists $n$ continuous, non-intersecting curves in $\H$ connecting the endpoints of the pairs. More precisely, the latter means that there exists continuous $\eta_i : [0,1] \to \overline{\H}$ with $\eta_i[0,1] \cap \eta_j[0,1] = \emptyset$, $\eta_i(0,1) \subset \H$, and $\eta_i(0) = a_i, \eta_i(1) = b_i$. For each $\bfs x$ the number of link patterns is given by the Catalan number
\[
C_n = \frac{1}{n+1} \dbinom{2n}{n}.
\]
We conclude the section by describing our algebraic solutions to the null vector equations and the connection with the Calogero-Moser system.

\subsection{\texorpdfstring{Review: Multiple SLE$(0)$ from Multiple SLE$(\kappa)$}{Review: Multiple SLE(0) from Multiple SLE(kappa)}}
Each multiple SLE$(0)$ in $\H$ is uniquely determined by $2n$ distinct real points $\bfs x$ and one of the $C_n$ link patterns $\alpha$ that specifies how the points in $\bfs x$ are connected. We use multiple SLE$(0; \bfs x; \alpha)$ to denote the ensemble of $n$ curves $\bfs \eta = (\eta_1, \ldots, \eta_n)$ in $\H$ corresponding to the data $(\bfs x; \alpha)$. We also use multiple SLE$(\kappa; \bfs x; \alpha)$ to denote the law of the $n$ random SLE$(\kappa)$ curves that almost surely connect the boundary points $\bfs x$ according to the link pattern $\alpha$. For $0 < \kappa \leq 4$ there are several existence arguments for these laws \cite{LK:configurational_measure, Lawler:part_functions_SLE, Lawler:PC, PelWu:multiple_SLEs} (see also \cite{Wu:hypergeometric} for $4 < \kappa \leq 6$), and in \cite{BPW:uniqueness} it has been shown that the law is uniquely characterized by a certain conditional law/resampling property. Loewner chains for multiple SLE$(\kappa)$ curves were studied in \cite{BBK05, Dubedat07, Graham:multiple_SLE}, the approach being that the driving function dynamics are determined by a collection of so-called partition functions that encode the geometry of the domain one is growing in and the location of the growth points. There is an associated literature on solutions to the system of PDEs (the BPZ equations) satisfied by the partition functions \cite{Dubedat06, FK1, FK2, FK3, FK4, KyPel:pure_partitions, JL:smoothness}. We also study multiple SLE$(\kappa)$ in the forthcoming \cite{AKM}, through the lens of a Gaussian conformal field theory introduced in \cite{KM13,KM}. The work in \cite{AKM} motivates the approach of the present work and we explain more of our intuition in the concluding remarks. For $\kappa = 0$, Peltola and Wang show that each multiple SLE$(0; \bfs x; \alpha)$ ensemble has the following properties:
\begin{itemize}
 \item it is the deterministic limit of multiple SLE$(\kappa; \bfs x; \alpha)$ as $\kappa \to 0$,
 \item it is the unique minimizer of a certain functional (called the \textbf{multichordal Loewner energy}) on ensembles of $n$ curves that connect the points in $\bfs x$ according to the pattern $\alpha$,
 \item it has the \textbf{geodesic multichord} property, meaning that each $\eta_j$ is the hyperbolic geodesic in the connected component of $\H \backslash \bigcup_{k \neq j} \eta_{k}$ whose boundary contains the endpoints of $\eta_j$, 
 \item each curve $\eta_j$ can be individually generated by a Loewner chain, where the driving term for $\eta_j$ is determined by solutions that Peltola and Wang construct (using the Brownian loop measure of the curves) for the classical limit of the so-called \textbf{Belavin-Polyakov-Zamolodchikov (BPZ)} equations as $\kappa \to 0$, and finally,
 \item when regarded as a subset of the Riemann sphere $\widehat{\C}$, $\bfs \eta \cup \overline{\bfs \eta} \cup \R$ is the real locus of a real rational function of degree $n+1$, whose critical points are precisely at $x_1, \ldots, x_{2n}$.
\end{itemize}
The first four properties, although difficult to prove rigorously, are quite natural from heuristic considerations coming from the multiple SLE$(\kappa)$ theory. Indeed, the first and second properties come from a small $\kappa$ large deviations principle which, prior to \cite{PW20}, was known to hold for a \textit{single} chordal SLE$(\kappa)$ curve \cite{Wang:LE}. In the single curve case the limiting chordal SLE$(0)$ curve is the hyperbolic geodesic connecting two boundary points in the half-plane. The geodesic multichord property is a manifestation of this single curve geodesic property and the resampling property of multiple SLE$(\kappa)$ ensembles: that each random curve, conditionally on all of the others, is an SLE in the connected component that it lives in after all of the other curves are removed. And for the fourth property, one would certainly expect that the Loewner chain description of the multiple SLE$(\kappa)$ curves should certainly carry over to the $\kappa = 0$ case, and likely even be less technically cumbersome owing to the smoothness of the curves when compared to their multiple SLE$(\kappa)$ counterparts.

In contrast, what does \textit{not} appear so naturally in the multiple SLE$(\kappa)$ theory is a connection with rational functions. As mentioned earlier this property implies that the points on the SLE$(0)$ curves are solutions to algebraic equations, which is certainly impossible for their fractal multiple SLE$(\kappa)$ counterparts. In the final section we explain how, at least heuristically, one may suspect the appearance of rational functions in a $\kappa \to 0$ limit of ``rational type'' functions that appear in conformal field theory. These heuristics are an underlying motivation for the following main results.

\subsection{\texorpdfstring{Real Rational Functions and SLE$(0, \bfs \rho)$}{Real Rational Functions and SLE(0, rho)}}\label{subsec:RRF_SLE}
Now we begin our description of the Loewner chains for the real locus of a class of real rational functions. We emphasize that our approach is fully self-contained and does not rely on multiple SLE$(\kappa)$ (for $\kappa = 0$ or $\kappa > 0$) nor on \cite{PW20}. To describe our results we first recall some basic facts about real rational functions and their real locus. Recall that any real rational function $R : \widehat{\C} \to \widehat{\C}$ of degree $n+1$ can be written as $R = P/Q$, where $P$ and $Q$ are monic polynomials with real coefficients, no common factors, and $\max \{ \deg P, \deg Q \} = n+1$. Any such $R$ is a branched covering of the Riemann sphere by itself, and the degree $n+1$ is the number of pre-images of any regular value. The branch points of the covering are precisely the critical points of $R$, i.e. the zero set of
\[
R' = \frac{P'Q - Q'P}{Q^2},
\]
considered as a multiset in the case of multiplicities (which we do not study in this paper). We use the following notation for the set of real rational functions with prescribed critical points on the line. 

\begin{def*}\label{def: CRR}
For $\bfs x = \{x_1, \ldots, x_{2n}\}$ with $x_j$ distinct, real, and finite, let $\CRR_{n+1}(\bfs x)$ be the set of real rational functions of degree $n+1$ whose critical points are precisely $\bfs x$. For any $R \in \CRR_{n+1}(\bfs x)$ we let $\Gamma(R)$ denote its real locus, i.e. the set
\[
\Gamma(R) = \left \{ z \in \widehat{\C} : R(z) \in \widehat{\R} \right \},
\]
where $\widehat{\R}$ is the one-point compactification of $\R$.
\end{def*}

$\CRR$ stands for \textit{critically real, real rational} functions. By this we mean that the rational function has real coefficients and that additionally all of its critical points are real. Natural operations on such functions are pre- and post-composition by elements of $\PSL(2,\R)$, the group of conformal automorphisms (M\"{o}bius transformations) of $\H$ to itself. It is straightforward to verify that the pre-composition of an element $R \in \CRR_{n+1}(\bfs x)$ by $\phi \in \PSL(2, \R)$ satisfies $R \circ \phi^{-1} \in \CRR_{n+1}(\phi(\bfs x))$ and $\Gamma(R \circ \phi^{-1}) = \phi(\Gamma(R))$. Similarly, one can check that the post-composition of $\phi$ by $R$ satisfies
\[
\phi \circ R \in \CRR_{n+1}(\bfs x), \quad \Gamma(\phi \circ R) = \Gamma(R). 
\]
Consequently, post-composition induces a natural equivalence relation on $\CRR_{n+1}(\bfs x)$. Goldberg \cite{Goldberg91} proved that for every $\bfs x$ there is at least one equivalence class in $\CRR_{n+1}(\bfs x)$ but no more than $C_n$ distinct classes. Later, \cite{EG02, MTV:Shapiro, EG11, PW20} all independently prove the matching lower bound of at least $C_n$ distinct classes. Therefore as $R$ varies over $\CRR_{n+1}(\bfs x)$ there are exactly $C_n$ distinct realizations of $\Gamma(R)$. The next lemma describes the structure of the real locus $\Gamma(R)$ for any $R \in \CRR_{n+1}(\bfs x)$, although we note that it is elementary and only relies on properties of $\CRR_{n+1}(\bfs x)$ (in particular it does not use any of \cite{Goldberg91, EG02, MTV:Shapiro, EG11, PW20}). 

\begin{lem}\label{lem: RRF_locus}
Let $\bfs x = \{ x_1, \ldots, x_{2n} \}$ be real and distinct and assume $R \in \CRR_{n+1}(\bfs x)$. Then $\Gamma(R)$ consists of the real line, $n$ non-crossing simple curves in $\H$ that connect the points in $\bfs x$ according to some non-crossing link pattern, and the complex conjugates of these curves.
\end{lem}

Here is a sketch: first appeal to the Riemann-Hurwitz formula, which implies that a rational function of degree $n+1$ has $2n$ critical points if and only if the critical points are all of index two. Thus the rational function is locally a two-to-one branched cover around each critical point, i.e.
\[
R(z) = R(x_i) + c_i(z - x_i)^2 + o((z-x_i)^3), \quad c_i = \frac12 R''(x_i) \in \R, \, c_i \neq 0.
\]
Therefore around each critical point the real locus locally consists only of the real line and a curve that passes through the critical point at a right angle. Since the $x_i$ are the only branch points the curves emanating from them cannot cross each other, and the real locus being path-connected forces that the $2n$ curves must pair up and connect according to some non-crossing link pattern. 

It is often useful to regard a set with the structure of $\Gamma(R)$ as a graph embedded in $\widehat{\C}$. The critical points $x_i$ are the vertices, the arcs connecting them are the edges (including the intervals on the real line), and connected components of the complement of $\Gamma(R)$ are the faces. The geometric structure of $\Gamma(R)$ suggests that one should be able to grow the $n$ curves via a Loewner chain, with the growth occurring either separately or in tandem. In the latter case it is better to think of the ensemble consisting of $2n$ curves, one growing from each of the critical points, but paired according to some link pattern. To describe the dynamics of the driving function for a single curve it turns out to be convenient to adopt the language of SLE$(\kappa, \bfs \rho)$ processes, specialized to the case $\kappa = 0$. 

\begin{def*}
Let $x \in \R$ and $\bfs \rho = \sum_i \rho_i \delta_{z_i}$ be a finite atomic measure on $\Cx$ that is symmetric under conjugation, i.e. $\bfs \rho(z) = \bfs \rho(\overline{z})$ for all $z$. Define the chordal SLE$(0, x, \bfs \rho)$ Loewner chain by
\begin{align}\label{eq: g}
\partial_t g_t(z) = \frac{2}{g_t(z) - x_t}, \quad g_0(z) = z,
\end{align}
where the driving function $x_t$ evolves as
\begin{align}\label{eq: zero_rho_driving}
\dot{x}_t = \int_{\C} \frac{d \bfs \rho(w)}{x_t - g_t(w)}, \quad x_0 = x.
\end{align}
The flow map $g_t$ is well-defined up until the first time $\tau$ at which $x_t = g_t(w)$ for some $w$ in the support of $\bfs \rho$, while for each $z \in \C$ the process $t \mapsto g_t(z)$ is well-defined up until $\tau_z \wedge \tau$, where $\tau_z$ is the first time at which $g_t(z) = x_t$. We let $K_t = \{ z \in \overline{\H} : \tau_z \leq t \}$ be the hull associated to this Loewner chain. We use the notation SLE$(0, x, \bfs \rho)$ when we want to emphasize the initial position $x$ of the driving function, but when the initial position is less important we generically refer to these systems as SLE$(0, \bfs \rho)$.
\end{def*}

We explain the connection with SLE$(\kappa, \bfs \rho)$ processes in Section \ref{sec:IoM}, although essentially it only involves adding a stochastic term $\sqrt{\kappa} \, dB_t$ to the evolution equation \eqref{eq: zero_rho_driving}. For our main theorem it is beneficial to also consider time-varying superpositions of the SLE$(0, \bfs \rho)$ dynamics. The purpose of these superpositions is to allow for the corresponding hull to grow from several different locations at once, which is useful for analyzing commutation properties of an ensemble of curves. 

\begin{def*}
Let $x_1, \ldots, x_N$ be real and distinct, and $\bfs \rho_1, \ldots, \bfs \rho_N$ be finite atomic measures on $\Cx$ that are symmetric under conjugation. Let $\bfs \nu = (\nu_1, \ldots, \nu_N)$, where each $\nu_i : [0, \infty) \to [0, \infty)$ is assumed to be measurable. Then define the $\bfs \nu$-superposition of SLE$(0, x_j, \bfs \rho_j)$ to be the Loewner chain $(g_t)_{t \geq 0}$ obtained by superimposing the weighted dynamics of each individual SLE$(0, x_j, \bfs \rho_j)$ Loewner chain, with the weight of each chain at time $t$ being $\nu_j(t)$. More precisely, both the flow on the Riemann sphere and the flows for the driving functions are superimposed, so that \eqref{eq: g} is replaced by
\begin{align}\label{eq: g2}
\partial_t g_t(z) = \sum_{j=1}^N \frac{2 \nu_j(t)}{g_t(z) - x_j(t)}, \quad g_0(z) = z,
\end{align}
and the driving functions $x_j(t)$, $j=1,\ldots,N$, evolve as
\begin{align}\label{eq: x2}
\dot{x}_j = \nu_j(t) \int_{\C} \frac{d \bfs \rho_{j,t}(w)}{x_j - g_t(w)} + \sum_{k \neq j} \frac{2 \nu_k(t)}{x_j - x_k}, \quad x_j(0) = x_j.
\end{align}
The measure $\bfs \rho_{j,t}$ is defined by
\[
\bfs \rho_{j,t} = \bfs \rho_j |_{\C \backslash \{ \bfs x \}}  +  \sum_{k \neq j} \bfs \rho_j(x_k(0)) \delta_{\gamma_k(t)},
\]
where $\bfs x = \{ x_1, \ldots, x_N \}$ and $\gamma_1(t), \ldots, \gamma_N(t)$ are the tips of the $N$ individual curves at time $t$ (these are well-defined since the Loewner chain is generated by a curve, see Proposition \ref{prop: SLE_zero_rho_curve}). For each $z$ the process $t \mapsto g_t(z)$ is well-defined up to a time $\tau_z \wedge \tau$, where $\tau$ is the first time at which any of the individual SLE$(0, x_j, \bfs \rho_j)$ stops being well-defined or any two $x_j$ collide. We continue to let $K_t = \{z \in \overline{\H} : \tau_z \leq t \}$ be the hull generated by this Loewner chain. 
\end{def*}

Equation \eqref{eq: g2} captures two distinct sources for the evolution of the driving functions: the integral term being the contribution from the SLE$(0, x_j, \bfs \rho_j)$ process, and the summation term being the contribution of the Loewner chain for $x_k$ acting on $x_j$, over all $k \neq j$. We now state our main result on the Loewner chains of $\Gamma(R)$ for elements of $\CRR_{n+1}(\bfs x)$.

\begin{thm}\label{thm: real_locus}
Let $\bfs x = \{x_1, \ldots, x_{2n} \}$ be distinct real points. Assume $\bfs \zeta = \{ \zeta_1, \ldots, \zeta_{n+1} \} \subset \widehat{\C}$ is closed under conjugation and solves the \textbf{stationary relation}
\begin{align}\label{eq: stationary}
\sum_{j=1}^{2n} \frac{1}{\zeta_k - x_j} = \sum_{l \neq k} \frac{2}{\zeta_k - \zeta_l}, \quad k=1,\ldots,n+1.
\end{align}
Then there exists an $R \in \CRR_{n+1}(\bfs x)$ with pole set $\bfs \zeta$. Furthermore, for
\begin{align}\label{eq: rhoj}
\bfs \rho_j = \sum_{k \neq j} 2 \delta_{x_k} - \sum_{k=1}^{n+1} 4 \delta_{\zeta_k}, \quad j=1,\ldots,2n,
\end{align}
the hulls $K_t$ generated by \textit{any} $\bfs \nu$-superposition of the SLE$(0, x_j, \bfs \rho_j)$ Loewner flows are a subset of $\Gamma(R)$, up to any time $t$ before the collisions of any poles or critical points. Up to any such time
\[
R \circ g_t^{-1} \in \CRR_{n+1}(\bfs x(t)),
\]
where $\bfs x(t)$ is the location of the critical points at time $t$ under the $\bfs \nu$-superposition.
\end{thm}

The last statement shows that the poles and critical points of the rational function generate Loewner chains along which the rationality of $R$ is preserved, and this fact is what ultimately shows that these particular chains generate the real locus. Moreover these results hold under \textit{any} $\bfs \nu$-superposition of the SLE$(0, x_j, \bfs \rho_j)$ processes. For any choice of $\bfs \nu$ the generated hull is always a subset of the same \textit{fixed} set $\Gamma(R)$. This is a strong manifestation of the \textbf{geometric commutation} property: that the same curves are generated regardless of the order in which the driving function dynamics are applied. Dub\'{e}dat studies geometric commutation for multiple SLE$(\kappa)$ in \cite{Dubedat06} (see also \cite{Graham:multiple_SLE}), although in the stochastic setting the commutation is only required to hold in law. In Section \ref{sec: commutation} we study geometric commutation in the deterministic setting and how it naturally leads to the system of null vector equations for the driving function dynamics. 

Under any $\bfs \nu$-superposition the processes $x_1(t), \ldots, x_{2n}(t)$ and $\zeta_1(t), \ldots, \zeta_{n+1}(t)$ form an autonomous dynamical system. The dynamics in the standard coordinate chart of $\H$ are expressed in \eqref{eq:x_zeta_dynamics} of Section \ref{sec:IoM}. Note that the particular choice $\nu_j \equiv 1$ and $\nu_k \equiv 0$ for $k \neq j$ corresponds to growing a single curve anchored at $x_j$. Consequently Theorem \ref{thm: real_locus} gives the following simple description of each curve in the multiple SLE$(0)$ ensemble.

\begin{cor}
Each individual curve in a multiple SLE$(0)$ ensemble is an SLE$(0, \bfs \rho)$ with $\int \! \bfs \rho = -6$. 
\end{cor}

The calculation that $\int \! \bfs \rho_j = -6$ is straightforward, but of course relies on the charges at the critical points being of size $2$ and the charges at the poles being of size $-4$. The reason for the specific choices of $2$ and $-4$ is more subtle, and we only explain it heuristically in our concluding remarks. An important consequence of $\bfs \rho_j$ having total mass $-6$ is the following.

\begin{cor}
Each individual curve in a multiple SLE$(0)$ ensemble is M\"{o}bius invariant.
\end{cor}

This is a property of SLE$(0, \bfs \rho)$ processes with $\int \! \bfs \rho = -6$. We state the precise result in Section \ref{sec:IoM} and prove it in Section \ref{sec: commutation}. It is the analogue of the well known fact that SLE$(\kappa, \bfs \rho)$ processes are conformally invariant (in law) iff $\int \! \bfs \rho = \kappa-6$. M\"{o}bius invariance means that if $\gamma$ is an SLE$(0, x, \bfs \rho)$ curve and $\phi \in \PSL(2, \R)$ then $t \mapsto \phi(\gamma(t))$ is an SLE$(0, \phi(x), \phi_{\#} \bfs \rho)$ curve, at least up to a time change. Here $\phi_{\#} \bfs \rho$ is the pushforward of $\bfs \rho$ by $\phi$. Note that because $\phi$ can swap the point at infinity with another point it is important that $\bfs \rho$ account for any charges at infinity.

The other mysterious aspect of Theorem \ref{thm: real_locus} is the stationary relation \eqref{eq: stationary}. Although SLE$(0, \bfs \rho)$ processes are defined for general $\bfs \rho$, Theorem \ref{thm: real_locus} requires that the points $\zeta_1, \ldots, \zeta_{n+1}$ in $\supp \bfs \rho$ satisfy \eqref{eq: stationary} in order to make the connection with real rational functions. The first assertion of the theorem, that a solution to the stationary relation implies the existence of an element of $R \in \CRR_{n+1}(\bfs x)$, is intimately connected to the poles of the rational function. By writing $R = P/Q$ we see that the finite poles of $R$ are just the zeros of $Q$, and since $Q$ is a polynomial with real coefficients these zeros must always appear in conjugate pairs. Furthermore, since $R$ is of degree $n+1$ with $2n$ critical points the number of zeros of $Q$ must always be $n$ or $n+1$. This follows from the fact that the numerator of $R'$ is $P'Q - Q'P$ and hence the $2n$ distinct points of $\bfs x$ can be critical points of $R$ iff $\deg P'Q - Q'P = 2n$. Since $\deg P, \deg Q \leq n+1$ this is possible iff either $\deg P = \deg Q = n+1$, or $\deg Q = n+1$ and $\deg P = n$, or $\deg P = n+1$ and $\deg Q = n$. The last case is precisely when $R$ has a pole at infinity, since then $R(z) = z + o(1)$ as $z \to \infty$. Note that including a point at infinity in $\bfs \zeta$ does not necessarily preclude $\bfs \zeta$ from being a solution to the stationary relation, since in that case both sides of \eqref{eq: stationary} degenerate to zero.

The connection between the stationary relation and elements of $\CRR_{n+1}(\bfs x)$ is made more precise in the next theorem. It states the stationary relation is not only a sufficient condition for the existence of real rational functions but is also (essentially) necessary. 

\begin{thm}\label{thm: stationary}
Let $\bfs x = \{ x_1, \ldots, x_{2n} \}$ be real and distinct and $\bfs \zeta = \{ \zeta_1, \ldots, \zeta_{n+1} \} \subset \widehat{\C}$ be closed under conjugation with the $\zeta_i$ distinct. Then there exists an $R \in \CRR_{n+1}(\bfs x)$ with pole set $\bfs \zeta$ iff $\bfs \zeta \cap \bfs x = \emptyset$ and $\bfs \zeta$ satisfies the stationary relation \eqref{eq: stationary}.
\end{thm}

Theorem \ref{thm: stationary} is a straightforward exercise in complex analysis based on the partial fraction expansion of $R'$. We highlight the statement as it is used heavily in our analysis of the Loewner chains and in making the connection with Calogero-Moser. Given $\bfs x$ it is typically difficult to explicitly determine solutions to \eqref{eq: stationary} for arbitrary $\bfs x$, other than for small $n$ (see Section \ref{sec: examples} for $n=1,$ $n=2$, and $n=3$). Since the theorem holds in both directions the existence results of \cite{EG02, MTV:Shapiro, EG11, PW20} for real rational functions already implies existence of solutions to the stationary relation. In fact there must be infinitely many solutions since post-composition of elements in $\CRR_{n+1}(\bfs x)$ does not preserve the poles, even though it does preserve the real locus and critical points. By \cite{Goldberg91, EG02,MTV:Shapiro,EG11,PW20} the solution space to \eqref{eq: stationary} must partition into exactly $C_n$ distinct equivalence classes under post-composition. In fact, if one could independently prove results on the structure of the solution space to \eqref{eq: stationary}, using algebraic methods or any other means, then Theorem \ref{thm: stationary} provides a new avenue for re-proving either Goldberg's upper bound or the matching lower bounds of \cite{EG02, MTV:Shapiro, EG11, PW20}. 

Using the stationary relation we also give a natural, self-contained proof of the following.

\begin{thm}\label{thm:RRF_are_geos}
Let $\bfs x = \{x_1, \ldots, x_{2n} \}$ be distinct real points. If $R \in \CRR_{n+1}(\bfs x)$ then $\Gamma(R)$ has the geodesic multichord property.
\end{thm}

Our proof of Theorem \ref{thm:RRF_are_geos} uses that the stationary relation is preserved under the SLE$(0, x_j, \bfs \rho_j)$ Loewner chains, which is a consequence of the proof of Theorem \ref{thm: real_locus}. We carefully analyze the limiting behavior of the stationary relation as a curve in the ensemble approaches its endpoint. We show that in this limit the map $R \circ g_t^{-1}$ degenerates to a rational function of one smaller degree and with two fewer critical points. Repeated iteration of this argument brings us to a single curve which we can directly show is a hyperbolic geodesic. From this we recover the geodesic multichord property by inverting the intermediate conformal maps and using that hyperbolic geodesics are conformally invariant. In essence Theorem \ref{thm:RRF_are_geos} is a converse to the result of Peltola and Wang, who show that geodesic multichords must be the real locus of a real rational function (this is for geodesic multichords of the type considered in this paper - more general types are considered in \cite{BE:canonical} and the recent \cite{MRW:piecewise}). Then by appealing to Goldberg's result \cite{Goldberg91} Peltola and Wang show that $\Gamma(R)$ satisfies the geodesic multichord property for every $R \in \CRR_{n+1}(\bfs x)$. Our argument is more direct and has no reliance on Goldberg's result. 

Our proof of Theorem \ref{thm: real_locus} ultimately rests on a field integral of motion for SLE$(0, \bfs \rho)$ Loewner chains, which we describe in the next theorem. We note that the structure of the $\bfs \rho$ measure is the same as in Theorem \ref{thm: real_locus}, but the result does \textit{not} require the stationary relation to hold.

\begin{thm} \label{primitive}
Let $x_1, \ldots, x_{2n}$ be real and distinct and $\bfs \zeta = \{\zeta_1, \ldots, \zeta_{n+1} \} \subset \widehat{\C}$ be distinct and closed under conjugation. Define $\bfs \rho_j$, $j=1,\ldots,2n$, as in \eqref{eq: rhoj}. Then for any $\bfs \nu$-superposition of the SLE$(0, x_j, \bfs \rho_j)$ processes the quantity
\begin{equation}\label{eq: Nt}
N_t(z) := g_t'(z) \frac{\prod_{j=1}^{2n} (g_t(z) - x_j(t))}{\prod_{\zeta_k \neq \infty} (g_t(z) - g_t(\zeta_{k}))^2}
\end{equation}
is an integral of motion on the interval $[0, \tau_z \wedge \tau)$, for each $z \in \overline{\H}$.
\end{thm}

As we explain in the concluding remarks, we discovered this field integral of motion by taking a heuristic $\kappa \to 0$ limit of a \textbf{martingale observable} for a system of multiple SLE$(\kappa)$ curves. Martingale observables can be thought of as the stochastic counterparts of integrals of motion, and in \cite{AKM} we will explain how conformal field theory produces an infinite family of martingale observables for systems of multiple SLE$(\kappa)$. These observables are correlation functions derived from formal algebraic operations on the random Gaussian free field, although multiple SLE requires a version of the GFF that is shifted by an appropriate deterministic function; see \cite{SS09, Dubedat09, SS13} (among others) for more on this idea. The quantity \eqref{eq: Nt} is closely related to this shift.

\subsection{Solutions to the Null Vector Equations}

In this paper the term null vector equations refers, for a given set $\bfs x = \{x_1, \ldots, x_{2n} \}\in \Rd^{2n}=\{x_j \textrm{ are distinct real points}\}$, to the system of $2n$ quadratic equations
\begin{align}\label{eq: NV0}
\frac{1}{2} U_j^2 + \sum_{k \neq j} \frac{2}{x_k - x_j} U_k - \sum_{k \neq j} \frac{6}{(x_k - x_j)^2} = 0, \quad j=1,\ldots,2n.
\end{align}
We are interested in real-valued solutions to these equations, in which the variables $U_1, \ldots, U_{2n}$ are regarded as unknowns. Their relevance to multiple SLE$(0)$ initially came from the expectation that, given solutions to \eqref{eq: NV0}, trajectories of the differential equation
\begin{align}\label{eq:Uj_intro}
\dot x_j(t) = U_j(\boldsymbol x(t))\nu_j(t) + \sum_{k\ne j} \frac{2\nu_k(t)}{x_j(t)-x_k(t)}, \quad j=1,\ldots,2n
\end{align}
should be the driving functions for the individual curves in a commuting multiple SLE$(0)$ ensemble. This heuristic was first explained in the SLE literature in \cite{BBK05}, who arrived at it by taking a formal $\kappa \to 0$ limit of the \textbf{BPZ equations}, the system of $2n$ linear partial differential equations
\begin{align}\label{eq: BPZ}
\left( \frac{\kappa}{2} \partial_{j}^2 + \sum_{k \neq j} \left( \frac{2}{x_k - x_j} \partial_{k} - \frac{(6-\kappa)/\kappa}{(x_k - x_j)^2} \right) \right) \mathcal{Z} = 0, \quad j=1,\ldots,2n. 
\end{align} 
The BPZ equations are satisfied by partition functions $\mathcal{Z} : \R^{2n}_d \to \R_+$ that describe multiple SLE$(\kappa)$ curves; more precisely $\kappa \dell_j \log \mathcal{Z}$ is the drift term in the stochastic differential equation for the driving process $x_j(t)$ that generates the $j$th curve in the ensemble. Bauer, Bernard, and Kyt\"{o}lla derived the BPZ equations using ideas from conformal field theory (which we also do in the forthcoming \cite{AKM}), and it is due to the CFT connection that we refer to \eqref{eq: NV0} as the null vector equations. Later Dub\'{e}dat \cite{Dubedat07} (see also \cite{Graham:multiple_SLE}) showed that \eqref{eq: BPZ} is a necessary condition for generating random curves that satisfy a geometric commutation property (in law). In Section \ref{sec: commutation} we use similar ideas to show that the null vector equations \eqref{eq: NV0} are a necessary condition on any $U_j : \Rd^{2n} \to \R$, if one wants that the curves generated by solutions to \eqref{eq:Uj_intro} satisfy geometric commutation and a conformal invariance property. In the deterministic setting the argument is quite different from \cite{Dubedat07}, owing to the quadratic nature of the null vector equations compared to the linear structure of the BPZ equations. We also note that our argument in Section \ref{sec: commutation} frees the null vector equations \eqref{eq: NV0} from their connection to multiple SLE$(\kappa)$, which is the only way in which they have appeared in the SLE literature to date. Indeed the derivation of \eqref{eq: NV0} from \eqref{eq: BPZ} in \cite{BBK05} is by taking a limit of $\kappa \partial_j \log \mathcal{Z}$ as $\kappa \to 0$, although their argument was only a heuristic and not a precise proof. Bauer, Bernard, and Kyt\"{o}lla also make no claim about existence of solutions to \eqref{eq: NV0}, and until \cite{PW20} no solutions were known in the SLE literature. It is possible that solutions to \eqref{eq: NV0} had been discussed in the algebraic geometry literature but we are unaware of them.

Our next result gives a new expression for solutions to the null vector equations \eqref{eq: NV0}. Our solutions are precisely the right hand side of \eqref{eq: zero_rho_driving}, for the $\bfs \rho_j$ encoded by the SLE$(0, x_j, \bfs \rho_j)$ processes of Theorem \ref{thm: real_locus}. The exact formula is given in equation \eqref{eq: U}, where we note the appearance of the terms $\zeta_{\alpha, k}(\bfs x)$ that we call \textbf{pole functions}. The purpose of these objects is so that we can write our solutions purely in terms of the $\bfs x$ variables, which are the only parameters that appear in \eqref{eq: NV0}, and without the auxiliary $\bfs \zeta$ variables. For convenience we define these pole functions in terms of a distinguished representative of each equivalence class $\CRR_{n+1}(\bfs x; \alpha)$, where the classes are induced by post-composition and indexed by link patterns. In Section \ref{sec: RRF_Loewner} we show that any other choice of representative would lead to the same solutions $U_{\alpha,j}$, i.e. the solutions $U_{\alpha,j}$ are constant on each equivalence class $\CRR_{n+1}(\bfs x; \alpha)$.

\begin{def*}
Let $\bfs x = \{ x_1, \ldots, x_{2n} \}$ be real and distinct and $\alpha$ be a non-crossing topological link pattern connecting the points in $\bfs x$. Define the canonical element of $\CRR_{n+1}(\bfs x; \alpha)$ to be the rational function $R$ that satisfies the \textit{hydrodynamic normalization} $R(z) = z + o(1)$ at infinity and whose derivative factors as
\[
R'(z) = \frac{\prod_{j=1}^{2n} (z - x_j)}{\prod_{k=1}^n (z - \zeta_k)^2}.
\]
Here $\zeta_1, \ldots, \zeta_n$ are the finite poles of $R$ and by the hydrodynamic normalization there is another simple pole at infinity. We define $\zeta_{\alpha, 1}(\bfs x), \ldots, \zeta_{\alpha,n}(\bfs x)$ to be the poles of $R$, and we call $\zeta_{\alpha,k}$ the \textbf{pole functions}. We use no specific labeling scheme for the indices $k=1,\ldots,n$, as the choice of the labeling is not important so long as it is applied consistently.
\end{def*}

By Theorem \ref{thm: stationary} the pole functions must satisfy the stationary relation \eqref{eq: stationary} but it is difficult to find explicit expressions for them in terms of the $\bfs x$. In this sense the solutions \eqref{eq: U} are semi-explicit. Nonetheless the pole functions exist as a consequence of the non-emptiness of the equivalence classes $\CRR_{n+1}(\bfs x; \alpha)$ as proved by (any one of) \cite{EG02, MTV:Shapiro, EG11, PW20}. These existence results provide the existence of the solutions \eqref{eq: U} in the next theorem.

\begin{thm}\label{thm: Z}
For distinct boundary points $\bfs x$ and a link pattern $\alpha$ connecting them define
\begin{align} \label{eq: Z}
Z_{\alpha}(\bfs x) := \!\! \prod_{1 \leq j < k \leq 2n} \!\! (x_j - x_k)^2 \!\! \prod_{1 \leq l < m \leq n} \!\! (\zeta_{\alpha,l}(\bfs x) - \zeta_{\alpha,m}(\bfs x))^8 \prod_{k=1}^{2n} \prod_{l=1}^n (x_k - \zeta_{\alpha,l}(\bfs x))^{-4}.
\end{align}
Then $Z_{\alpha}$ is strictly positive, and is a $(3,0)$-differential at each coordinate of $\bfs x$ with respect to M\"{o}bius transformations of $\H$ to itself. Moreover 
\begin{align} \label{eq: U}
U_{\alpha,j}(\bfs x) = \dell_{x_j} \log Z_{\alpha}(\bfs x) = \sum_{k \neq j} \frac{2}{x_j - x_k} - \sum_{k=1}^n \frac{4}{x_j - \zeta_{\alpha,k}(\bfs x) }, \quad j=1,\ldots,2n,
\end{align}
are real-valued, translation invariant, homogeneous of degree $-1$, and satisfy the system of null vector equations \eqref{eq: NV0}.
\end{thm}

The simplicity of formula \eqref{eq: U} is one of the strengths of the result, as it also allows us to verify that \eqref{eq: U} is a solution to the null vector equations \eqref{eq: NV0} using nothing more than the stationary relation and basic algebra. That \eqref{eq: U} is the logarithmic derivative of a function $Z_{\alpha}$ (the ``partition function'') is analogous to the situation for multiple SLE$(\kappa)$, but the partition function itself is somewhat more transparent in this case. Peltola and Wang also express their solutions $U_{\alpha,j}(\bfs x)$ as the derivative of a function $\mathcal{U}_{\alpha}(\bfs x)$, defined as the value at the minimum for a certain functional on curves that connect the points in $\bfs x$ according to the link pattern $\alpha$ (the multichordal Loewner \textit{potential}). The functional itself is explicit in terms of the Brownian loop measure, and using convexity properties they are able to show that a minimizer exists and is unique. However the Brownian loop measure term makes it difficult to evaluate the functional explicitly for any fixed ensemble of curves, including at the minimizer, and so \cite{PW20} has no simple closed form expression for $\mathcal{U}_{\alpha}(\bfs x)$. Nonetheless they are still able to show that the values $-\partial_{x_j} \mathcal{U}_{\alpha}(\bfs x)$ solve the null vector equations \eqref{eq: NV0}, and that when used as Loewner vector fields these curves also generate the individual curves in $\Gamma(R)$. This indirectly shows that our version of $U_{\alpha,j}$ agrees with theirs (since they generate the same ensemble of curves), which in turn implies that $\log Z_{\alpha}$ and $-\mathcal{U}_{\alpha}$ should be the same (up to an additive constant). It would be interesting to see a direct reason for this argument, as it clearly hints at an interesting relation between the Brownian loop measure and rational functions.

\subsection{Connection to the Calogero-Moser System}

Finally we come to the connection with the Calogero-Moser integrable system. Calogero-Moser describes a one-dimensional many-body problem in which the particles interact via a pairwise inverse quadratic potential. This means that each particle applies a force to every other equal to the cube of the inverse distance between them. The system is of Hamiltonian type and is most famous for being \textbf{completely integrable}. Informally this means that one can find as many integrals of motion as there are particles, with all of the integrals being \textit{in involution}. These integrals of motion are determined by finding a \textbf{Lax pair} \cite{Lax:pairs} for the dynamical system, and their existence implies that the integrals of motion can be used (at least abstractly) to re-parameterize the dynamical system via the so-called \textbf{action-angle coordinates} in which the new system is solvable. Moser \cite{Moser:CM_integrable} discovered the Lax pair $(L,M)$ shortly after Calogero \cite{Calogero:quantum_CM_integrable} solved the quantum version of the system. Moser's work showed that the transformation into action-angle coordinates is explicit (see also \cite{Ruij:action_angle}), leading to several concrete methods for generating solutions to the equation. In our case the Calogero-Moser system arises by a particular superposition of the SLE$(0, x_j, \bfs \rho_j)$ processes of Theorem \ref{thm: real_locus}. We call it the $1/4$-superposition.

\begin{thm}\label{thm: CM-main}
Let $\bfs x = \{x_1, \ldots, x_{2n} \}$ be distinct real points and $\bfs \zeta = \{ \zeta_1, \ldots, \zeta_{n+1} \} \subset \widehat{\C}$ be closed under conjugation and solve the stationary relation \eqref{eq: stationary}. Under the $1/4$-superposition of the SLE$(0, x_j, \bfs \rho_j)$ processes of \eqref{eq: rhoj}, i.e. $\nu_j \equiv 1/4$ for all $j$, the critical points $\bfs x(t)$ evolve as
\begin{align}\label{eq: CM-main}
\ddot x_j = - \sum_{k \neq j} \frac{2}{(x_j - x_k)^3}.
\end{align}
\end{thm}

The choice of $\nu_j \equiv 1/4$ is mostly for convenience; any other constant value would be a linear time change of the system. Under these second order dynamics the particles $\bfs x(t)$ stay on the real line but due to the sign they begin to collide with each other in finite time. Hence the forces between particles are \textit{attractive} in our situation, whereas Calogero-Moser is most typically studied in the \textit{repulsive} case (see \cite{Wilson:collisions} for a notable exception). The Hamiltonian corresponding to \eqref{eq: CM-main} is
\begin{align}\label{eq: Ham}
\mathcal{H}(\bfs x, \bfs p) = \frac{1}{2} \sum_j p_j^2 - \sum_{j < k} \frac{1}{(x_j - x_k)^2},
\end{align}
and the negative sign therein is somewhat atypical as a result. It is, however, equivalent to the typical Hamiltonian with a positive sign but the particles $x_i$ confined to the imaginary axis instead of the real line, and so we may freely use the known results on Calogero-Moser in the study of our system. The transition from the first-order SLE$(0, x_j, \bfs \rho_j)$ processes to the second order Calogero-Moser system is ultimately a consequence of the stationary relation. Under the $1/4$-superposition the poles and critical points follow a coupled system of first order differential equations, and then owing to the stationary relation the second order evolution of the poles and critical points decouple into two autonomous systems. Remarkably the poles also end up following the same Calogero-Moser system (see Corollary \ref{cor: CM}). The same coupled first order system appears in \cite{ABW09, AGK11} although without the stationary relation involved. Many non-linear evolution equations  (Korteweg-de Vries, Kadomtsev-Petviashvili, Benjamin-Ono) turn out to be integrable by studying the evolution of their poles under finite-dimensional dynamical systems. See \cite{AC:solitons_book} for a broad survey, and \cite{AMM:KdV} in particular for a situation similar to ours in which the authors study a collection of flows induced by the Calogero-Moser system that preserve the rationality of certain initial conditions to the KdV equation. We derive a result of this type by applying Theorem \ref{thm: CM-main} to show that the time evolution of the real rational functions $R \circ g_t^{-1}$ of Theorem \ref{thm: real_locus} follows a parabolic partial differential equation. 

\begin{thm}\label{parabolic PDE}
Let $\bfs x = \{x_1, \ldots, x_{2n} \}$ be distinct real points and $\bfs \zeta = \{ \zeta_1, \ldots, \zeta_{n+1} \} \subset \widehat{\C}$ be closed under conjugation and solve the stationary relation \eqref{eq: stationary}. Under the 1/4-superposition of the SLE$(0, x_j, \bfs \rho_j)$ processes of \eqref{eq: rhoj} the function
\[
Q_t(z) = \prod_{\zeta_k \neq \infty} (z - g_t(\zeta_k))
\]
solves the backward heat equation 
\begin{align}\label{eq: backward heat}
\dot Q_t = -\frac12 Q_t''.
\end{align}
Moreover, if $R$ is an element of $\CRR_{n+1}(\bfs x)$ with pole set $\bfs \zeta$ and $g_t$ is the Loewner chain induced by the $1/4$-superposition, the rational functions $R_t := R \circ g_t^{-1}$ follow the parabolic PDE
\[
\dot R_t(z) = -\frac12 R_t''(z) - \frac{Q_t'(z)}{Q_t(z)} R_t'(z),
\]
at least until the first collision time between any particles in $\bfs x \cup \bfs \zeta$.
\end{thm}

In Section \ref{sec: examples} we directly verify this evolution in specific examples, by explicitly computing the time evolution of the poles and critical points. As stated, in both Theorems \ref{thm: CM-main} and \ref{parabolic PDE} the connection with real rational functions comes about through the stationary relation and Theorem \ref{thm: stationary}. Note that no explicit dependence on the link pattern for the real locus is mentioned; instead it lurks in the choice of the initial momenta $\dot{\bfs x}(0)$. The next result shows that for every equivalence class in $\CRR_{n+1}(\bfs x)$ there is a choice of initial momenta $\dot{\bfs x}(0)$ such that the corresponding solutions $t \mapsto \bfs x(t)$ of the Calogero-Moser system grow the real locus corresponding to that equivalence class, when $\bfs x(t)$ is used as the driving function in a Loewner chain. The initial momenta $\dot{\bfs x}(0)$ is intimately related to the null vector equations \eqref{eq: U}. 

\begin{thm} \label{thm: CM2SLEzero}
Let $\bfs x(t) = (x_1(t), \ldots, x_{2n}(t))$ evolve according to the Calogero-Moser dynamics \eqref{eq: CM-main} with $\bfs x(0)$ having all coordinates distinct. For each choice of link pattern $\alpha$ there is a choice of canonical momenta $\dot{\bfs x}(0)$ that solves the system of quadratic equations
\[
\dot x_j^2 - \sum_{k\ne j}\frac{\dot x_j + \dot x_k}{x_j - x_k}- \sum_{k\ne j}\frac{1}{(x_j-x_k)^2} + \frac{1}{2} \sum_{k \neq j} \sum_{l \neq k} \frac{1}{(x_j-x_k)(x_j-x_l)} = 0, \quad j=1,\ldots,2n,
\]
such that the Loewner chain
\[
\dot g_t(z) = \sum_{j=1}^{2n} \frac{1/2}{g_t(z) - x_j(t)}, \quad g_0(z) = z
\]
generates the multiple SLE$(0; \bfs x; \alpha)$ curves, at least up until the first collision time of any two $x_j$.
\end{thm}

In Section \ref{sec: CM} we show how this result can naturally be expressed in terms of the Lax pair for the Calogero-Moser system. 

\section{\texorpdfstring{SLE$(0, \bfs \rho)$ and the Field Integral of Motion}{SLE(0, rho) and the Field Integral of Motion}} \label{sec:IoM}

We begin by deriving some basic but important properties of SLE$(0, \bfs \rho)$ processes. Note that the dynamics are defined on all of $\Cx$, but that symmetry of $\bfs \rho$ ensures that the driving function stays on the real line. Consequently the Loewner chain maps real points to real points and the hull $K_t$ is symmetric under conjugation. Although the SLE$(0, \bfs \rho)$ flow is purely deterministic the name comes from the stochastic SLE$(\kappa, \bfs \rho)$ processes, which first appeared in \cite{LSW:restriction} for describing the boundaries of random conformally invariant sets in scaling limits of discrete statistical mechanics models. These processes are also of central importance in the study of the so-called \textit{imaginary geometry} \cite{MS16:imaginary1}, and in many other contexts which are too numerous to list here. We use the definition of SLE$(\kappa, \bfs \rho)$ processes coming from \cite{SW05}, which is equivalent to all other definitions in the literature but is best suited to our needs. For $\kappa > 0$ and $x, \bfs \rho$ as in Section \ref{subsec:RRF_SLE} an SLE$(\kappa, x, \bfs \rho)$ corresponds to the same Loewner chain \eqref{eq: g} but with a driving function that evolves as
\[
d x_t = \sqrt{\kappa} dB_t + \int_{\C} \frac{d \bfs \rho(z)}{x_t - g_t(z)}, \quad x_0 = x, \,
\]
with $B_t$ a standard Brownian motion. Setting $\kappa = 0$ matches the definition of SLE$(0, x, \bfs \rho)$ above. Our next result shows that SLE$(0, \bfs \rho)$ processes match their SLE$(\kappa, \bfs \rho)$ counterparts in two important ways: the hulls $K_t$ are simple curves in $\H$ (which is also the case for $\kappa$ positive but small), and these curves are invariant under conformal automorphisms of $\H$ iff $\bfs \rho(\Cx) = \int \! \bfs \rho = -6$ (for $\kappa > 0$ the required condition is $\int \! \bfs \rho = \kappa-6$). Applying this result to Theorem \ref{thm: real_locus} show that the arcs of real loci of real rational functions can be generated by particular choices of $\bfs \rho$ with $\int \! \bfs \rho = -6$, thereby allowing us to understand the conformal invariance property of the real loci in a new way.

\begin{prop}\label{prop: SLE_zero_rho_curve}
Let $x \in \R$ and $\bfs \rho$ be a finite atomic measure on $\Cx$ that is symmetric under conjugation. If $\bfs \rho(x) = 0$ the hull $K_t$ is a simple curve in $\H$ for any $t < \tau$. This curve is invariant under conformal automorphisms of $\H$ to itself iff $\bfs \rho(\Cx) = \int \! \bfs \rho = -6$.
\end{prop}

\begin{proof}
That the hull $K_t$ is a simple curve (i.e. does not self-touch) in $\H$ follows from the smoothness of the vector field that the driving function follows. Indeed, since $\bfs \rho(x) = 0$ the force points are initially separated from the driving function and the vector field on the right hand side of the driving function dynamics \eqref{eq: zero_rho_driving} is locally Lipshitz for all $t < \tau$. This is a sufficient condition for the hull to be a simple curve up until time $\tau$.
 
The proof of conformal invariance for $\int \! \bfs \rho = -6$ is deferred until Section \ref{sec:Loewner_flows_charts}, where it fits in naturally to the discussion on Loewner flows in a different global coordinate chart.
\end{proof}

Invariance of the curve under conformal automorphisms means that
\[
\phi \left( \operatorname{SLE}(0, x, \bfs \rho) \right) = \operatorname{SLE}(0, \phi(x), \phi_{\#} \bfs \rho),
\]
where on both sides ``SLE'' refers to the hull generated by the Loewner chain, considered as an unparameterized subset of $\H$. As a parameterized curve equality only holds after an appropriate time change of one of the curves. The conformal automorphisms $\phi$ are the M\"{o}bius transformations of $\H$ to itself, i.e. the subgroup $\PSL(2, \R)$ of fractional linear transformations. One way to think of it is the following: if $\gamma(t)$ is the curve generated by the SLE$(0, x, \bfs \rho)$ dynamics then it is fully determined by $x$ and $\bfs \rho$. Conformal invariance is the statement that the curve $\phi(\gamma(t))$ is the same as the curve determined by $\phi(x)$ and $\phi_{\#} \bfs \rho$, at least up to a time change.

Our next result derives a field integral of motion a single for SLE$(0, \bfs \rho)$ process. This integral of motion is a special case of Theorem \ref{primitive}, in which the given quantity is an integral of motion for the superposition of many such processes. One consequence of considering only a single SLE$(0, \bfs \rho)$ process is that the next result does not assume that $\bfs \rho$ has the particular charges of $2$ and $-4$ that are present in Theorem \ref{primitive}.

\begin{prop}\label{prop: SLE_zero_rho_IoM}
Let $x \in \R$ and $\bfs \rho$ be a finite atomic measure on $\Cx$ that is symmetric under conjugation. Under the SLE$(0, x, \bfs \rho)$ dynamics process the quantity
\begin{align}\label{eq:SLE0_rho_IoM}
g_t'(z) (g_t(z) - x_t) \exp \left \{ \frac{1}{2} \int_{\C} \log (g_t(z) - g_t(w)) \, d \bfs \rho|_{\C}(w) \right \}
\end{align}
is, for each $z \in \overline{\H}$, an integral of motion on $[0, \tau_z \wedge \tau)$. 
\end{prop}

Here $\bfs \rho|_{\C}$ refers to the measure restricted to $\C$, i.e. ignoring any charge at infinity. In practice we will only consider $\bfs \rho$ such that $\bfs \rho(z)$ is an even integer at any $z$. Therefore the exponential term will be well-defined with respect to branch cuts of the logarithm. The algebra in the proof does not require this property of $\bfs \rho$, as we now see.

\begin{proof}[Proof of Proposition \ref{prop: SLE_zero_rho_IoM}]
Without loss of generality assume that $\bfs \rho = \bfs \rho|_{\C}$, i.e. that $\bfs \rho$ has no charge at infinity. The time derivative of the logarithm of \eqref{eq:SLE0_rho_IoM} is
\[
\frac{\dot{g}_t'(z)}{g_t'(z)} + \frac{\dot{g}_t(z) - \dot{x}_t}{g_t(z) - x_t} + \frac{1}{2} \int_{\C} \frac{\dot{g}_t(z) - \dot{g}_t(w)}{g_t(z) - g_t(w)} d \bfs \rho(w).
\]
By using \eqref{eq: g} and its derivative with respect to $z$ we obtain
\[
\frac{\dot{g}_t'(z)}{g_t'(z)} + \frac{\dot{g}_t(z)}{g_t(z) - x_t} = - \frac{2}{(g_t(z) - x_t)^2} + \frac{2}{(g_t(z) - x_t)^2} = 0.
\]
Similarly from \eqref{eq: g} and the evolution equation \eqref{eq: zero_rho_driving} for $x_t$ we have
\begin{align*}
- \frac{\dot{x}_t}{g_t(z) - x_t} + \frac{1}{2} \int_{\C} \frac{\dot{g}_t(z)}{g_t(z) - g_t(w)} \, d \bfs \rho(w) &= -\frac{1}{g_t(z) - x_t} \int_{\C} \left( \frac{1}{x_t - g_t(w)} - \frac{1}{g_t(z) - g_t(w)} \right) \, d \bfs \rho(w) \\
&= -\int_{\C} \frac{d \bfs \rho(w)}{(x_t - g_t(w))(g_t(z) - g_t(w))}. 
\end{align*}
Using \eqref{eq: g} again we obtain
\[
-\frac{1}{2} \int_{\C} \frac{\dot{g}_t(w)}{g_t(z) - g_t(w)} \, d \bfs \rho(w) = \int_{\C} \frac{d \bfs \rho(w)}{(x_t - g_t(w))(g_t(z) - g_t(w))},
\]
and the result follows.
\end{proof}

Now we turn to Theorem \ref{primitive} and the integral of motion for superimposed SLE$(0, x_j, \bfs \rho_j)$ processes. Recall that the purpose of the superpositions is to allow for the curve to grow from several locations at once. If more than one $\nu_i$ is non-zero at some time then the hull $K_t$ grows from at least two distinct points, since there are multiple singularities on the right hand side of \eqref{eq: g2}. By changing the value of $\nu_j$ with time we can alter the speed of growth at each of the growth locations. For example, setting $\nu_j \equiv 1$ for all $j$ causes $K_t$ to grow from $N$ locations simultaneously. This choice also leads to the connection with the Calogero-Moser integrable system, as we will see in Section \ref{sec: CM}. On the other hand, setting $\nu_j \equiv 1$ and $\nu_k \equiv 0$ for $k \neq j$ corresponds to the single SLE$(0, x_j, \bfs \rho_j)$ system in which $K_t$ only grows from a single point. It will always be clear from the context which particular growth type we are referring to. Also note that the dynamics are initially only defined until the first collision time between the points within or between any two SLE$(0, x_j, \bfs \rho_j)$ processes, but sometimes they can be extended beyond these times. However this requires a choice of how precisely the dynamics should be extended and the structure of the hull after this time may depend on this choice. We discuss a natural extension in Section \ref{sec: RRF_Loewner}.

\begin{proof}[Proof of Theorem \ref{primitive}]
For each $j=1,\ldots,2n$, the definition of $\bfs \rho_j$ implies that $N_t(z)$ has the form
\[
N_t(z) = g_t'(z) (g_t(z) - x_j(t)) \exp \left \{ \frac{1}{2} \int_{\C} \log(g_t(z) - g_t(w)) \, d \bfs \rho_j|_{\C}(w) \right \}.
\]
Note that the expression is the same for each value of $j=1,\ldots,2n$. This is a special property of the $\bfs \rho_j$ that we work with, in particular that $\bfs \rho_j(x_i) = 2$ for all $j \neq i$. For these $\bfs \rho_j$ we also have that the driving functions $x_j(t)$, $j=1,\ldots,2n$ and the poles $\zeta_k(t)$, $k=1,\ldots,n+1$, evolve as the autonomous system
\begin{align}\label{eq:x_zeta_dynamics}
\dot x_j = \nu_j \left( \sum_{k \neq j} \frac{2}{x_j - x_k} - \sum_{k=1}^{n+1} \frac{4}{x_j - \zeta_k} \right) + \sum_{k \neq j} \frac{2 \nu_k}{x_j - x_k}, \quad \dot \zeta_k = \sum_{j=1}^{2n} \frac{2 \nu_j}{\zeta_k - x_j},
\end{align}
where we have dropped the $t$ dependence for convenience. As in the proof of Proposition \ref{prop: SLE_zero_rho_IoM} we have that the time derivative of the logarithm of $N_t(z)$ is
\[
\frac{d}{dt} \log N_t(z) = \frac{\dot g_t'(z)}{g_t'(z)} + \sum_{j=1}^{2n}\frac{\dot g_t(z)-\dot x_j(t)}{g_t(z)-x_j(t)} -2 \sum_{k=1}^{n+1} \frac{\dot g_t(z)-\dot \zeta_k(t)}{g_t(z)-\zeta_k(t)}.
\]
Also, as in the proof of Proposition \ref{prop: SLE_zero_rho_IoM} we have (dropping the $z$ dependence in $g_t(z)$)
\[
\frac{\dot{g}_t'}{g_t'} = -\sum_{j=1}^{2n} \frac{2 \nu_j}{(g_t - x_j)^2}.
\]
Insert this into the expression for $\partial_t \log N_t(z)$ and expand out $\dot x_j$ and $\dot \zeta_k$ and group the terms as $\dot N_t(z)/ N_t(z) = \mathrm{I} + \mathrm{II}$, where
\[
\mathrm{I} =\sum_{j\ne k} \frac{2\nu_k}{(g_t-x_j)(g_t-x_k)}-\sum_{j\ne k} \frac{2\nu_j}{(g_t-x_j)(x_j-x_k)}-\sum_{j\ne k} \frac{2\nu_k}{(g_t-x_j)(x_j-x_k)}
\]
is the sum of the terms not involving $\zeta_k$ and
\[
\mathrm{II} =\sum_{j,k} 4\nu_j \Big(\frac{1}{(g_t-x_j)(x_j-\zeta_k)}- \frac{1}{(g_t-x_j)(g_t-\zeta_k)}- \frac{1}{(g_t-\zeta_k)(x_j-\zeta_k)}\Big)
\]
is the sum of the terms that do involve $\zeta_k$. 
It is easy to see that each summand of $\mathrm{II}$ vanishes. It follows that  $\mathrm{II} = 0$. 
Rearranging terms or changing the summation index,
\[
\mathrm{I} =\sum_{j\ne k} 2\nu_k \Big(\frac{1}{(g_t-x_j)(g_t-x_k)}+\sum_{j\ne k} \frac{1}{(g_t-x_k)(x_j-x_k)}-\sum_{j\ne k} \frac{1}{(g_t-x_j)(x_j-x_k)} \Big) = 0.
\]
This completes the proof.
\end{proof}

\section{Real Rational Functions and Loewner Flow \label{sec: RRF_Loewner}}

In the first subsection we prove Theorem \ref{thm: stationary}, that the stationary relation between poles and critical points is both a necessary and sufficient condition for elements of $\CRR_{n+1}(\bfs x)$, and Theorem \ref{thm: real_locus}, that any superposition of the SLE$(0, x_j, \bfs \rho_j)$ processes generates the real locus of $R$. In the second subsection we prove Theorem \ref{thm: Z} on our solutions to the stationary relation, and in the third subsection we prove Theorem \ref{thm:RRF_are_geos} that $\Gamma(R)$ has the geodesic multichord property. The final subsection reviews the classical description of the real locus as flow lines and geodesics.

\subsection{Stationary Relation and Loewner Flow\label{subsec:Loewner}}
The next lemma collects some basic facts about meromorphic functions and their partial fraction expansions.

\begin{lem}\label{lem:rational_basic}
Fix $n \geq 1$ and let $\bfs x = \{ x_1, \ldots, x_{2n} \}$ be real and distinct. The following are true.
\begin{enumerate}[(i)]
\item For $0 \leq d \leq n+1$ consider the meromorphic function
\begin{align}\label{eq:meromorphic_product}
f(z) = \frac{\prod_{j=1}^{2n} (z - x_j)}{\prod_{k=1}^d (z - \zeta_k)^2}.
\end{align}
If $\zeta_1, \ldots, \zeta_d \in \C$ are all distinct then $f(z)$ has a partial fraction expansion
\begin{align}\label{eq:meromorphic_partial_fraction}
f(z) = p(z) + \sum_{k=1}^d \frac{A_k}{(z - \zeta_k)^2} + \sum_{k=1}^d \frac{B_k}{z - \zeta_k},
\end{align}
where $p(z)$ is a polynomial of degree $\max \{ 2n - 2d, 0 \}$, and the constants $A_k$ and $B_k$ are
\begin{align}\label{eq:Ak_Bk}
A_k = \frac{\prod_{j=1}^{2n} (\zeta_k - x_j)}{\prod_{l \neq k} (\zeta_k - \zeta_l)^2}, \quad B_k = \left( \sum_{j=1}^{2n} \frac{1}{\zeta_k - x_j} - \sum_{l \neq k} \frac{2}{\zeta_k - \zeta_l} \right) A_k, \quad k=1,\ldots, d.
\end{align}
\item If $\zeta_1, \ldots, \zeta_d \in \C$ are all distinct then the meromorphic function \eqref{eq:meromorphic_partial_fraction} has a rational primitive iff all $B_k = 0$.
\item Up to a real multiplicative constant, the derivative of any $R \in \CRR_{n+1}(\bfs x)$ factorizes as
\begin{align}\label{eq:Rprime_product}
R'(z) = \frac{\prod_{j=1}^{2n} (z - x_j)}{\prod_{\zeta_k \neq \infty} (z - \zeta_k)^2}, 
\end{align}
where $\{ \zeta_1, \ldots, \zeta_{n+1} \}$ are the poles of $R$, considered as a multi-set in the case of multiplicities.
\item If $R \in \CRR_{n+1}(\bfs x)$ and no element of $\bfs x$ is a pole of $R$ then all poles are simple.
\item If $R \in \CRR_{n+1}(\bfs x)$ and no element of $\bfs x$ is a pole of $R$ then, up to a real multiplicative constant, $R'$ has the partial fraction expansion 
\begin{align}\label{eq:Rprime_partial_fraction}
R'(z) = \1{d=n} + \sum_{k=1}^d \frac{A_k}{(z - \zeta_k)^2},
\end{align}
where $\{ \zeta_1, \ldots, \zeta_d \}$ are the finite poles of $R$, $d \in \{n, n+1 \}$, and $A_k$ is given by \eqref{eq:Ak_Bk}. 
\item If $R \in \CRR_{n+1}(\bfs x)$ has all simple poles then they are distinct from the points in $\bfs x$.
\end{enumerate}
\end{lem}

Parts (iv)-(vi) are stated so as to deal with a rare but possible phenomenon: the existence of $R \in \CRR_{n+1}(\bfs x)$ that have a double pole at some critical point $x_i$. Example \ref{eg: rainbow} in Section \ref{sec: examples} is one such example for $n=2$. The existence of such rational functions complicates the statements of many of our results but not the essence of the underlying arguments. We will also see that these exceptional $R$ are in a sense ``isolated singularities'' that can be removed in a very natural way, so rather than writing lengthy additional statements to deal with them we simply work with $R \in \CRR_{n+1}(\bfs x)$ that do not exhibit this phenomenon. In this case statement (ii) can be replaced by the more general statement that a meromorphic function has a rational primitive iff it has zero residue at each of its poles.

\begin{proof}
In part (i) it is clear that \eqref{eq:meromorphic_partial_fraction} follows from \eqref{eq:meromorphic_product} and each pole being of order at most two, the latter of which follows from the assumption that the $\zeta_k$ are distinct. The formula for $A_k$ follows from \eqref{eq:meromorphic_product} and \eqref{eq:meromorphic_partial_fraction} by
\[
A_k = \lim_{z \to \zeta_k} f(z) (z - \zeta_k)^2 = \frac{\prod_{j=1}^{2n} (\zeta_k - x_j)}{\prod_{l \neq k} (\zeta_k - \zeta_l)^2}.
\]
The formula for $B_k$ follows from
\[
B_k = \lim_{z \to \zeta_k} (f(z) (z - \zeta_k)^2)' = \lim_{z \to \zeta_k} \left( \sum_{j=1}^{2n} \frac{1}{z - x_j} - \sum_{l \neq k} \frac{2}{z - \zeta_l} \right) f(z) (z - \zeta_k)^2.
\]

Part (ii) is obvious. In part (iii) the factorization of $R'$ into the form \eqref{eq:Rprime_product} follows from a combination of $R' = (P'Q - QP')/Q^2$, the critical points of $R$ being precisely the points in $\bfs x$, and the finite poles of $R$ being the zeros of $Q$. The real-valuedness of the multiplicative constant follows from $P$ and $Q$ having real coefficients. Part (iv) follows by contradiction: if $R = P/Q$ has a double pole (or higher) at a zero $\zeta_k$ of $Q$ then $Q(\zeta_k) = Q'(\zeta_k) = 0$ and hence the Wronskian of $P$ and $Q$ vanishes at $\zeta_k$, implying that $\zeta_k$ is also a critical point. 

For part (v) the assumption that no $\bfs x$ is a pole of $R$ is combined with part (iv) to guarantee that all poles are simple. Hence by parts (iii) and (i) the partial fraction expansion \eqref{eq:meromorphic_partial_fraction} applies to $R'$, with the coefficients $A_k$, $B_k$ given by \eqref{eq:Ak_Bk}. However it must hold that $B_k = 0$ for all $k$, which follows from part (ii) and the fact that $R'$ has a rational primitive ($R$ itself). That the $p(z)$ of \eqref{eq:meromorphic_partial_fraction} becomes $p(z) = \1{d=n}$ in the expansion of $R'$ follows from the behavior of \eqref{eq:Rprime_product} at infinity.

Part (vi) also follows by contradiction. Assume that $\zeta_l = x_m$ for some $l,m$. Define $q(z) = R'(z)(z - \zeta_l)^2$, so that by part (iii) we have 
\[
q(z) = \frac{\prod_{j=1}^{2n} (z - x_j)}{\prod_{k \neq l} (z - \zeta_k)^2}.
\]
Since the poles are all distinct the denominator is well-defined at $\zeta_l$, and since $\zeta_l = x_m$ by assumption we have $q(\zeta_l) = 0$. Further, since the $x_j$ are all distinct the factorization of $q(z)$ has only one $(z - x_m) = (z - \zeta_l)$ term, hence $q$ has the expansion 
\[
q(z) = (z - \zeta_l) + O((z - \zeta_l)^2),
\]
in a neighborhood of $\zeta_l$. In particular $q'(\zeta_l) \neq 0$. However this contradicts the conclusion $q'(\zeta_l) = B_l = 0$ that follows from the partial fraction expansion \eqref{eq:meromorphic_partial_fraction} and part (ii).
\end{proof}

Given these results we now prove Theorem \ref{thm: stationary} on the stationary relation.

\begin{proof}[Proof of Theorem \ref{thm: stationary}]
First recall that by assumption the $n+1$ elements of $\bfs \zeta$ are all distinct. Assume that $R \in \CRR_{n+1}(\bfs x)$ has pole set $\bfs \zeta$. Since the elements of $\bfs \zeta$ are distinct then by Lemma \ref{lem:rational_basic} part (vi) we have $\bfs \zeta \cap \bfs x = \emptyset$. Then apply Lemma \ref{lem:rational_basic} part (v) to conclude that $R'$ has the partial fraction expansion \eqref{eq:Rprime_partial_fraction}. The coefficients $A_k$ are given by \eqref{eq:Ak_Bk} and hence $A_k \neq 0$ for each $k$, thanks to $\bfs \zeta \cap \bfs x = \emptyset$. At the same time the formula \eqref{eq:Ak_Bk} for $B_k$ also holds, but we also know $B_k = 0$ from the argument in part (v). Since $A_k \neq 0$ this implies the stationary relation \eqref{eq: stationary}.

Conversely, suppose that $\bfs \zeta$ satisfies the stationary relation \eqref{eq: stationary} and $\bfs \zeta \cap \bfs x = \emptyset$. Combine $\bfs x$ and $\bfs \zeta$ to form the meromorphic function \eqref{eq:meromorphic_product}. Since the elements of $\bfs \zeta$ are assumed to be all distinct the partial fraction expansion \eqref{eq:meromorphic_partial_fraction} holds, and since $\bfs \zeta$ satisfies the stationary relation we have $B_k = 0$ for all $k$. Lemma \ref{lem:rational_basic} part (ii) completes the proof.
\end{proof}

\begin{rmk*}
The principle underlying Theorem \ref{thm: stationary} is that if $R \in \CRR_{n+1}(\bfs x)$ then $R'$ has a rational primitive, and therefore its residue at all poles must be zero. When the poles are distinct from the critical points this principle expresses itself algebraically in the form of the stationary relation. When, however, a pole overlaps with a critical point (in which case the pole is necessarily of order two) then the partial fraction expansion of $R'$ becomes more complicated as does the corresponding algebra. However the underlying principle remains the same.
\end{rmk*}

\begin{proof}[Proof of Theorem \ref{thm: real_locus}]
We first prove that $R \circ g_t^{-1}$ is real rational and in $\CRR_{n+1}(\bfs x(t))$. Since the points $\bfs \zeta$ are assumed to be closed under conjugation and solve the stationary relation Theorem \ref{thm: stationary} guarantees the existence of an $R \in \CRR_{n+1}(\bfs x)$ with pole set $\bfs \zeta$. Moreover, by Lemma \ref{lem:rational_basic} the derivative of $R$ factors (up to a real multiplicative constant) as
\[
R'(z) = \frac{\prod_{j=1}^{2n} (z - x_j)}{\prod_{\zeta_k \neq \infty} (z - \zeta_k)^2}.
\]
Using the integral of motion $N_t(z)$ of Theorem \ref{primitive} we have
\[
N_t(z) = g_t'(z) \frac{\prod_{j=1}^{2n} (g_t(z) - x_j(t))}{\prod_{\zeta_k \neq \infty} (g_t(z) - g_t(\zeta_k))^2 } = \frac{\prod_{j=1}^{2n} (z - x_j)}{\prod_{\zeta_k \neq \infty} (z - \zeta_k)^2} = N_0(z) = R'(z).
\]
Let $f_t = g_t^{-1}$, and since the above holds everywhere evaluate it at $f_t(z)$ to obtain
\begin{align}\label{eq:Rt_prime}
\frac{\prod_{j=1}^{2n} (z - x_j(t))}{\prod_{\zeta_k \neq \infty} (z - g_t(\zeta_k))^2 } = f_t'(z) \frac{\prod_{j=1}^{2n} (f_t(z) - x_j)}{\prod_{\zeta_k \neq \infty} (f_t(z) - \zeta_k)^2} = f_t'(z) R(f_t(z)) = (R \circ f_t)'(z).
\end{align}
Apply Lemma \ref{lem:rational_basic} part (i) to the factorization on the left hand side to conclude that it has the partial fraction expansion \eqref{eq:meromorphic_partial_fraction} with
\[
A_k(t) = \frac{\prod_{j=1}^{2n} (\zeta_k(t) - x_j(t))}{\prod_{l \neq k} (\zeta_k(t) - \zeta_l(t))^2}
\]
and the corresponding formula for $B_k = B_k(t)$. Note the application of Lemma \ref{lem:rational_basic} part (i) used that the points $x_j(t)$ and $g_t(\zeta_k)$ are all distinct (over all $j$ and $k$), which follows from $t < \tau$ and the definition of the time $\tau$. This gives $A_k(t) \neq 0$ at all such $t$. At such times it is also true that $B_k(t) = 0$ for all $k$, since otherwise the partial fraction expansion \eqref{eq:meromorphic_partial_fraction} for $(R \circ f_t)'$ would imply that $R \circ f_t$ has logarithmic singularities. This contradicts rationality of $R$ and smoothness of $g_t^{-1}$ on $\H$. From $B_k(t) = 0$ and $A_k(t) \neq 0$ we immediately conclude, again from Lemma \ref{lem:rational_basic} part (i), that the stationary relation is preserved at all times. Furthermore, from $B_k(t) = 0$ and Lemma \ref{lem:rational_basic} part (ii) we conclude that $(R \circ f_t)'$ has a rational primitive, and the factorization \eqref{eq:Rt_prime} and Theorem \ref{thm: stationary} imply that $R \circ f_t \in \CRR_{n+1}(\bfs x(t))$, as claimed.

Finally, to prove that the hull $K_t$ is a subset of the real locus $\Gamma(R)$, first recall that $g_{t}$ is the unique conformal map from $\H \backslash K_t$ onto $\H$ with the hydrodynamic normalization $g_t(z) = z + o(1)$ as $z \to \infty$. Hence $g_t^{-1}$ maps $\R$ onto the subset $K_t$. On the other hand $R \circ g_t^{-1}$ being real rational implies that it maps the real line into the real line. Consequently $R$ must map $K_t$ back to the real line, which implies that $K_t$ is a subset of $\Gamma(R)$. 
\end{proof}

We now consider an additional result on the interplay between poles and critical points and their evolution under Loewner flow. 

\begin{lem}\label{lem: gt_algebraic}
Let $\bfs x = \{ x_1, \ldots, x_{2n} \}$ be real and distinct and $\bfs \zeta = \{ \zeta_1, \ldots, \zeta_{n+1} \} \subset \widehat{\C}$ be symmetric under conjugation and solve the stationary relation \eqref{eq: stationary}. Let $g_t$ be the Loewner chain of any $\bfs \nu$-superposition of the SLE$(0, x_j, \bfs \rho_j)$ flows of Theorem \ref{thm: real_locus}. Then for any $t < \tau$ and any $z$ such that $t < \tau_z$ we have
\begin{align}\label{eq:Rt_partial_fraction}
g_t(z) - \sum_{k=1}^{n+1} \frac{A_k(t)}{g_t(z) - g_t(\zeta_k)} = z - \sum_{k=1}^{n+1} \frac{A_k(0)}{z - \zeta_k},
\end{align}
where $A_k(t)$ is defined by
\[
A_k(t) = \frac{\prod_{j=1}^{2n} (g_t(\zeta_k) - x_j(t))}{\prod_{l \neq k} (g_t(\zeta_k) - g_t(\zeta_l))^2}.
\]
\end{lem}

This lemma is useful for finding $\Gamma(R)$ given its poles and critical points. Indeed, if we insert $z = \gamma_j(t)$ into \eqref{eq:Rt_partial_fraction} for some $j=1,\ldots,2n$ then the left hand side is real-valued. By searching for $z \in \H$ that make the right hand side real-valued we can locate the tips of the curves as the Loewner evolution unfolds. We use this technique frequently in the examples. It would be interesting to see if this can lead to an efficient numerical method for plotting $\Gamma(R)$. The proof of Lemma \ref{lem: gt_algebraic} is simple.

\begin{proof}
Combine $R \circ g_t^{-1} \circ g_t = R$ with equation \eqref{eq:Rt_prime} for $R \circ g_t^{-1}$.
\end{proof}

\subsection{Solutions to the Null Vector Equations via Real Rational Functions\label{sec: NV0_solns}}

Now we prove Theorem \ref{thm: Z}, that our $U_{\alpha, j}$ of \eqref{eq: U} are solutions to the null vector equations \eqref{eq: NV0}. Recall that we represent our $U_{\alpha, j}$ via the canonical element of $\CRR_{n+1}(\bfs x; \alpha)$, the purpose of which is to make the functions $U_{\alpha,j}$ a function of $(\bfs x; \alpha)$ alone. The proof uses that the pole functions $\zeta_{\alpha,k}(\bfs x)$ are smooth functions of the $\bfs x$ variables but does not require any explicit formulas for the derivatives. We also prove that our functions $U_{\alpha,j}$ satisfy a system of three algebraic equations called the \textbf{conformal Ward identities}. These are algebraic expressions of the fact that the $U_j$ transform in a particular way under M\"{o}bius transformations of the half-plane to itself; see Section \ref{sec: commutation} for more.

\begin{proof}[Proof of Theorem \ref{thm: Z}]
We first prove that the $U_{\alpha,j}$ of \eqref{eq: U} satisfy the null vector equations. Write $U_{\alpha,j} = U_j$ throughout. Real-valuedness of $U_j$ follows from $x_k \in \R$ and the $\zeta_k$ occurring in complex conjugate pairs. For the translation invariance, note that the pole functions $\zeta_{\alpha,k}$ clearly satisfy $\zeta_{\alpha,k}(\bfs x + h) = \zeta_{\alpha,k}(\bfs x) + h$ for $h \in \R$, since if $R \in \CRR_{n+1}(\bfs x; \alpha)$ is canonical then $R(z-h) \in \CRR_{n+1}(\bfs x + h; \alpha)$ and is also canonical. Similarly, $r R(r^{-1} z) \in \CRR_{n+1}(r \bfs x; \alpha)$ for $r > 0$ and is canonical, hence $\zeta_{\alpha,k}(r \bfs x) = r \zeta_{\alpha,k}(\bfs x)$. The homogeneity of $U_{\alpha,j}$ follows from these properties of the pole function and the definition \eqref{eq: U}.

For the proof that the $U_{j}$ satisfy the null vector equations it is convenient to introduce 
\begin{equation} \label{eq: uU}
u_j:= U_j - \sum_{k\ne j} \frac2{x_j-x_k}.
\end{equation}
All we need to check is that $u_j$'s satisfy the algebraic equations 
\begin{equation} \label{eq: u}
\frac12\, u_j^2 + 2 \sum_{k\ne j} \frac{u_j-u_k}{x_j-x_k}=0, \quad j=1,\ldots,2n.
\end{equation}
Using the stationary relation~\eqref{eq: stationary}, we have 
\begin{align*}
\frac18\Big(\frac12\, u_j^2 &+ 2 \sum_{k\ne j} \frac{u_j-u_k}{x_j-x_k}\Big)=\Big(\sum_{l=1}^n \frac1{\zeta_l-x_j}\Big)^2+\sum_{k\ne j}\sum_{l=1}^n \frac1{(\zeta_l-x_j)(\zeta_l-x_k)}\\
&=\sum_{l=1}^n \frac1{\zeta_l-x_j} \sum_{k=1}^{2n}\frac1{\zeta_l-x_k} + 2\sum_{k<l}\frac1{(\zeta_k-x_j)(\zeta_l-x_j)}\\
&=\sum_{l=1}^n \frac1{\zeta_l-x_j}\sum_{k\ne l}\frac2{\zeta_l-\zeta_k} + 2\sum_{k<l}\frac1{(\zeta_k-x_j)(\zeta_l-x_j)}=0.
\end{align*}
Here we used the identity 
\[
\sum_{k<l}\frac1{(\zeta_k-x_j)(\zeta_l-x_j)} = \sum_{k<l} -\frac1{(\zeta_k-x_j)(\zeta_k-\zeta_l)} +\frac1{(\zeta_l-x_j)(\zeta_k-\zeta_l)}.
\]

Now we prove that $U_{\alpha,j} = \partial_j \log Z_{\alpha}$ for $j=1,\ldots,2n$. Write $Z = Z_{\alpha}$ throughout. Strict positivity of $Z$ follows from the $x_j$ assumed to be real and distinct and the $\zeta_k$ always appearing in complex conjugate pairs. For the differentiability of $Z$ we observe that smoothness of the pole functions follows from the fact that for each $\bfs x$ they obey the stationary relation \eqref{eq: stationary}. Indeed, the stationary relation \eqref{eq: stationary} together with the implicit function theorem implies that there is a neighborhood of $\bfs x$ on which the poles $\zeta_{\alpha,k} = \zeta_{\alpha,k}(\bfs x)$ are smooth functions of the critical points. We use the existence and continuity of the first and second order partial derivatives to compute $\partial_j \log Z$. Indeed, using expression \eqref{eq: Z} for $Z$ and computing $\dell_{j} \log Z$ directly we have
\[
\dell_j \log Z = \sum_{k \neq j} \frac{2}{x_j - x_k} + \sum_{l=1}^{n} \frac{4}{\zeta_l - x_j} + 4 \sum_{k=1}^{2n} \sum_{l=1}^n \frac{\dell_{x_j} \zeta_l}{x_k - \zeta_l} + 8 \!\!\!\! \sum_{1 \leq l < m \leq n} \!\! \frac{\dell_{x_j} \zeta_l - \dell_{x_j} \zeta_m}{\zeta_l - \zeta_m}.
\]
For the last two terms use the stationary relation \eqref{eq: stationary} to obtain
\[
\sum_{l=1}^n \dell_{x_j} \zeta_l \sum_{k=1}^{2n} \frac{1}{x_k - \zeta_l} = - \sum_{l=1}^n \dell_{x_j} \zeta_l \sum_{m \neq l} \frac{2}{\zeta_l - \zeta_m} = -2 \!\!\!\! \sum_{1 \leq l < m \leq n} \!\! \frac{\dell_{x_j}\zeta_l - \dell_{x_j} \zeta_m}{\zeta_l - \zeta_m}.
\]
Thus the last two terms in the above expression vanish, thereby proving $\dell_j \log Z = U_j$. The $(3,0)$-differential property of $Z_{\alpha}$ simply means that
\[
Z_{\alpha}(\phi(\bfs x)) \prod_{i=1}^{2n} \phi'(x_i)^3 = Z_{\alpha}(\bfs x)
\]
holds for all M\"{o}bius transforms $\phi$ of $\H$ to itself. Here $\phi(\bfs x) = (\phi(x_1), \ldots, \phi(x_{2n}))$. This property is verified directly by checking it for translations, dilations, and inversions.
\end{proof}

The $(3,0)$-differential property of $Z_{\alpha}$ is also necessary to obtain M\"{o}bius invariance of the curves, which we explain in Section \ref{sec: commutation}. The following system of conformal Ward identities is also an expression of conformal invariance but in this section we focus on the algebraic verification of the identities. The proof relies on another simple relationship between the poles and critical points that we call the \textbf{centroid relation}, which we prove next.

\begin{lem}\label{lem:arithmetic_mean}
Let $\bfs x = \{ x_1, \ldots, x_{2n} \} \subset \R$ be distinct and $\alpha$ be a topological link pattern connecting them. Let $R \in \CRR_{n+1}(\bfs x; \alpha)$ be canonical. If the pole function values $\zeta_{\alpha,1}(\bfs x), \ldots, \zeta_{\alpha,n}(\bfs x)$ are all distinct then
\begin{equation} \label{eq: means}
\frac{1}{n} \sum_{k=1}^{n} \zeta_{\alpha, k}(\bfs x) = \frac{1}{2n} \sum_{j=1}^{2n} x_j.
\end{equation}
\end{lem}

\begin{proof}
By the definition of our canonical element we know that $R'$ has the partial fraction expansion \eqref{eq:Rprime_partial_fraction} with $d = n$ and with $\zeta_k$ representing the pole functions. The same partial fraction expansion implies $R'(z) = 1 + O(z^{-2})$ near $z = \infty$, and therefore
\[
\lim_{z \to \infty} z (R'(z) - 1) = 0.
\]
On the other hand, we may use the factorization \eqref{eq:Rprime_partial_fraction} of $R'$ to write $R'(z) - 1$ as
$$
R'(z) - 1 = \frac{(2\sum_{k=1}^n \zeta_k - \sum_{j=1}^{2n} x_j)z^{2n-1} + \cdots}{\prod_{k=1}^{n}(z-\zeta_k)^2} \sim \left( 2 \sum_{k=1}^n \zeta_k - \sum_{j=1}^{2n} x_j \right) z^{-1}.
$$
Combining the last two facts we conclude that $2\sum_{k=1}^n \zeta_k = \sum_{j=1}^{2n} x_j$, as claimed. 
\end{proof}

\begin{thm}\label{CWI}
For distinct real points $\bfs x$ and a link pattern $\alpha$ connecting them the functions $U_{\alpha,j}$ of \eqref{eq: U} satisfy the \textbf{conformal Ward identities}
\begin{align}\label{eq: CWI0}
\sum_{j=1}^{2n} U_{\alpha, j} = 0, \quad \sum_{j=1}^{2n} x_j U_{\alpha, j} = -6n, \quad \sum_{j=1}^{2n} x_j^2 U_{\alpha, j} = - 6 \sum_{j=1}^{2n} x_j.
\end{align}
\end{thm}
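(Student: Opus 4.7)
The plan is to verify each of the three conformal Ward identities by direct computation, splitting $U_{\alpha,j}$ into its two defining pieces (the $x$--$x$ interaction and the $x$--$\zeta$ interaction) and handling each by antisymmetric pairing combined with the stationary and centroid relations from Theorem~\ref{main: stationary}. Throughout I will use the ordered-pair pairing identity $\frac{f(a) - f(b)}{a-b}$ applied to $f(t) = 1, t, t^2$ on pairs of distinct indices, together with the substitution
\[
\sum_{j=1}^{2n} \frac{1}{\zeta_k - x_j} \;=\; 2 \sum_{l \ne k} \frac{1}{\zeta_k - \zeta_l}
\]
coming from \eqref{eq: stationary}, to convert all $x$--$\zeta$ sums into $\zeta$--$\zeta$ sums that are then handled by the same pairing trick.

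For the first identity $\sum_j U_{\alpha,j} = 0$, the $x$--$x$ double sum $\sum_j \sum_{k \ne j} \frac{2}{x_j - x_k}$ vanishes by antisymmetry, and after swapping the order of summation in the $x$--$\zeta$ piece and applying the stationary relation I reduce to $\sum_{k \ne l} \frac{8}{\zeta_k - \zeta_l}$, which again vanishes by antisymmetry.

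For the second identity, the $x$--$x$ sum becomes $\sum_{j<k}\bigl(\frac{2x_j}{x_j-x_k} + \frac{2x_k}{x_k-x_j}\bigr) = 2\binom{2n}{2} = 2n(2n-1)$. For the $x$--$\zeta$ piece I write $\frac{x_j}{\zeta_k - x_j} = -1 + \frac{\zeta_k}{\zeta_k - x_j}$; the $-1$ terms contribute $-8n^2$, while the stationary relation plus the pairing $\frac{\zeta_k}{\zeta_k-\zeta_l} + \frac{\zeta_l}{\zeta_l-\zeta_k} = 1$ turns the remaining piece into $8\binom{n}{2} = 4n(n-1)$. The grand total is $2n(2n-1) - 8n^2 + 4n(n-1) = -6n$. (Note that the centroid relation is not used here.)

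The third identity follows the same recipe at one higher order, and this is where the bookkeeping becomes the main obstacle. The $x$--$x$ sum evaluates to $2(2n-1)\sum_j x_j$ via $\frac{x_j^2 - x_k^2}{x_j - x_k} = x_j + x_k$. For the $x$--$\zeta$ piece I use the decomposition
\[
\frac{x_j^2}{\zeta_k - x_j} \;=\; (\zeta_k - x_j) - 2\zeta_k + \frac{\zeta_k^2}{\zeta_k - x_j}.
\]
Summing the first two terms over $j$ and $k$, the centroid relation $2\sum_k \zeta_k = \sum_j x_j$ makes the $(\zeta_k - x_j)$ contribution vanish and turns the $-2\zeta_k$ contribution into $-8n\sum_j x_j$. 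The remaining term reduces, via the stationary relation and the pairing $\frac{\zeta_k^2 - \zeta_l^2}{\zeta_k - \zeta_l} = \zeta_k + \zeta_l$, to $8(n-1)\sum_k \zeta_k = 4(n-1)\sum_j x_j$ using the centroid relation once more. Combining everything gives $(4n - 2 - 8n + 4n - 4)\sum_j x_j = -6\sum_j x_j$. The conceptual content is minimal; the only real care needed is to track where the centroid relation is invoked (not in the first two identities, but twice in the third) so that the linear-in-$x_j$ contributions cancel correctly.
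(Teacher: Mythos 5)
Your proposal is correct and follows essentially the same route as the paper's proof: direct evaluation of each identity by antisymmetric pairing on the $x$--$x$ sums, conversion of the $x$--$\zeta$ sums into $\zeta$--$\zeta$ sums via the stationary relation, and a final appeal to the centroid relation only in the third identity. The arithmetic checks out, including the observation that the centroid relation is not needed for the first two identities.
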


\begin{proof}[Proof of Theorem \ref{CWI}] For convenience we drop the dependence on $\alpha$ throughout. The first conformal Ward identity holds by symmetry:
\[
\sum_{j=1}^{2n} U_j = \sum_{j=1}^{2n}\sum_{k\ne j} \frac2{x_j-x_k} + \sum_{j=1}^{2n}\sum_{k=1}^n \frac4{\zeta_k-x_j}= \sum_{k=1}^n \sum_{l\ne k}\frac8{\zeta_k-\zeta_l} = 0.
\]
To find a sufficient and necessary condition for the second identity $\sum_{j=1}^{2n} x_j U_j= -6n$, we compute 
\[
\sum_{j=1}^{2n}\sum_{k\ne j} \frac{x_j}{x_j-x_k} = 2n^2-n, \qquad 
\sum_{j=1}^{2n}\sum_{k=1}^n \frac{x_j}{\zeta_k-x_j } = -2n^2 + \sum_{k=1}^n \sum_{l\ne k}\frac{2\zeta_k}{\zeta_k-\zeta_l} = -2n^2 +n(n-1).
\]
Thus we find $\sum_{j=1}^{2n} x_j U_j= 4n^2 - 2n - 8n^2 +4n(n-1) =-6n$, as claimed. For the last of the conformal Ward identities we compute 
\[
\sum_{j=1}^{2n} \sum_{k\ne j} \frac{x_j^2}{x_j-x_k} = \sum_{j<k}(x_j+x_k)= (2n-1)\sum_{j=1}^n x_j
\]
and therefore 
\[
\sum_{j=1}^{2n}\sum_{k=1}^n \frac{x_j^2}{\zeta_k-x_j} = -n\sum_{j=1}^{2n} x_j -2n \sum_{k=1}^n \zeta_k + \sum_{k=1}^n \sum_{l\ne k} \frac{2\zeta_k^2}{\zeta_k-\zeta_l} = -n\sum_{j=1}^{2n} x_j -2 \sum_{k=1}^n \zeta_k. 
\]
Now the last conformal Ward identity follows from the centroid relation of equation \eqref{eq: means}.
\end{proof}

\begin{rmk*}
Section \ref{sec: commutation} will show that the $U_{\alpha,j}$ satisfying the conformal Ward identities implies that the curves they generate are invariant under M\"{o}bius transformations. However it is already apparent that the curves that make up $\Gamma(R)$ have the correct M\"{o}bius invariance properties by considering \textit{pre}-compositions of $R \in \CRR_{n+1}(\bfs x)$. For $\phi \in \PSL(2, \R)$ it is straightforward to verify that $R \circ \phi^{-1} \in \CRR_{n+1}(\bfs \phi(x))$ and that $\Gamma(R \circ \phi^{-1}) = \phi(\Gamma(R))$. Therefore any formulation of the Loewner flow for real loci of real rational functions should obey this type of M\"{o}bius invariance. We also note that the term ``M\"{o}bius invariance'', although standard, is not necessarily the best description. A better term might be ``M\"{o}bius commutation'', since it indicates that growing $\Gamma(R)$ via Loewner flow and then mapping the curves via a M\"{o}bius transform is equivalent to first pre-composing the rational function and then applying the Loewner chain corresponding to the pre-composition. As usual this will only hold up to a time change.
\end{rmk*}

Using the canonical element for the definition of $U_{\alpha,j}$ is largely a matter of convenience, as the next lemma shows that the $U_{\alpha,j}$ are constant on (most of) the equivalence class $\CRR_{n+1}(\bfs x; \alpha)$.

\begin{lem}\label{lem:U_on_CRR}
Let $\bfs x = \{ x_1, \ldots, x_{2n} \} \subset \R$ be distinct and $\alpha$ be a topological link pattern connecting them. Define $U_{\alpha,j}$ on $\CRR_{n+1}(\bfs x; \alpha)$ by
\begin{align}\label{eq:U_on_CRR}
U_{\alpha,j}(R) = \sum_{k \neq j} \frac{2}{x_j - x_k} - \sum_{\zeta_k \in \bfs \zeta(R)} \frac{4}{x_j - \zeta_k},
\end{align}
where $\bfs \zeta(R)$ is the set of poles of $R$. Apart from a subset of $\CRR_{n+1}(\bfs x; \alpha)$ with $\PSL(2, \R)$ Haar measure zero the map $U_{\alpha,j}$ is well-defined, constant, and agrees with \eqref{eq: U}.
\end{lem}

\begin{proof}
As written the formula \eqref{eq:U_on_CRR} is ill-defined at elements of $\CRR_{n+1}(\bfs x; \alpha)$ for which the real rational function has a pole that overlaps with a critical point. In that case the pole is of order two and $R'$ has a pole of order three at the critical point. However there are only finitely many post-inversions that map any given element of $\CRR_{n+1}(\bfs x; \alpha)$ into such an element, from which it follows that the subset of such elements has zero measure under the Haar measure on $\PSL(2, \R)$.

For canonical $R$ this definition agrees with \eqref{eq: U}. For the rest of the equivalence class it is sufficient to consider post-compositions of $R$ by elements of $\PSL(2, \R)$, and this group is generated by translations, dilations, and inversions. In the first two cases it is easy to see that the pole set is preserved under post-translations and post-dilations of $R$, i.e. $\bfs \zeta(R + h) = \bfs \zeta(R)$ for $h \in \R$ and $\bfs \zeta(rR) = \bfs \zeta(R)$ for $r > 0$. Since the critical points of $R$ are also preserved under these operations it follows that the $U_{\alpha,j}$ are also preserved. Under post-inversion we have $\bfs \zeta(1/R) = \mathfrak{Z}(R)$, where $\mathfrak{Z}(R)$ is the zero set of $R$. The zeros, poles, and critical points satisfy
\[
\sum_{\zeta_k \in \bfs \zeta(R)} \frac{1}{x_j - \zeta_k} = \sum_{\eta_m \in \mathfrak{Z}(R)} \frac{1}{x_j - \eta_m}, \quad j=1,\ldots,2n,
\]
which follow from $R'(x_j) = 0$ and $R'/R$ given by
\[
\frac{R'(z)}{R(z)} = \sum_{\eta_m \in \mathfrak{Z}(R)} \frac{1}{z - \eta_m} - \sum_{\zeta_k \in \bfs \zeta(R)} \frac{1}{z - \zeta_k}.
\]
Consequently $U_{\alpha,j}(1/R) = U_{\alpha,j}(R)$, proving that $U_{\alpha,j}$ is constant. 
\end{proof}

\begin{rmk*}
See Example \ref{eg: rainbow} for a particular canonical rational function for which the pole overlaps with a critical point. Since the set of such singularities has measure zero within the equivalence class it is natural to extend the constant value of $U_{\alpha,j}$ to these elements, and this is the choice we adopt. From the Loewner chain point of view this is certainly the correct choice, since the set $\Gamma(R)$ that is being grown does not depend on the particular representative of the equivalence class. This choice also naturally extends our SLE$(0, x_j, \bfs \rho_j)$ dynamics of Theorem \ref{thm: real_locus} past the times at which a pole collides with a critical point. Indeed if $R$ is the initial rational function of Theorem \ref{thm: real_locus} then the flow $t \mapsto R \circ g_t^{-1}$ may eventually run into one of these singularities in $\CRR_{n+1}(\bfs x(t))$. At such a time simply post-compose by an element of $\PSL(2, \R)$ such that the poles and critical points no longer overlap, thereby allowing one to continue on the SLE$(0, x_j, \bfs \rho_j)$ dynamics. The choice of the particular post-composition does not matter precisely because the $U_{\alpha,j}$ are constant on the equivalence classes.
\end{rmk*}

\subsection{Geodesic Multichord Property\label{sec:geo_multichord}}

For $R \in \CRR_{n+1}(\bfs x)$ Theorem \ref{thm: real_locus} shows that the map $t \mapsto R \circ g_t^{-1}$ remains real rational before any two elements in $\bfs x \cup \bfs \zeta$ collide, where $\bfs \zeta = \bfs \zeta(R)$ is the pole set of $R$. The discussion at the end of the last section shows that the SLE$(0, x_j, \bfs \rho_j)$ dynamics that drive the evolution of $R \circ g_t^{-1}$ can be continued on past collision times between $\bfs x$ and $\bfs \zeta$ elements. In this section we study the limit of $R \circ g_t^{-1}$ when two elements of $\bfs x$ collide, and this analysis leads to the proof of the geodesic multichord property of Theorem \ref{thm:RRF_are_geos}.

Our arguments make use of the structure of $\Gamma(R)$ described in Lemma \ref{lem: RRF_locus}. We use that any $R \in \CRR_{n+1}(\bfs x)$ consists of $n$ curves connecting the points in $\bfs x$ according to a link pattern $\alpha$ and we assume that we know $\alpha$. The simplest situation for merging is when one grows a curve that has no other curve below it. 

\begin{prop}\label{prop:merging}
Let $\bfs x = \{ x_1, \ldots, x_{2n} \} \subset \R$ be distinct, relabeling them such that $x_1 < \ldots < x_{2n}$ if necessary. Let $R \in \CRR_{n+1}(\bfs x)$ and $\alpha$ be the link pattern for $\Gamma(R)$. If $\{x_j, x_{j+1} \} \in \alpha$ then
\[
\lim_{t \uparrow \tau} R \circ g_{t}^{-1} \in \CRR_{n}( g_{\tau}(\bfs{\hat{x}}_j); \hat{\alpha}_j),
\]
where $\hat{\bfs{x}}_j = \bfs x \backslash \{ x_j, x_{j+1} \}$, $\hat{\alpha}_j = \alpha \backslash \{ x_j, x_{j+1} \}$, $g_t$ is the Loewner chain of the SLE$(0, x_j, \bfs \rho_j)$ process, $\bfs \rho_j$ is given by \eqref{eq: rhoj} with $\zeta_k$ the poles of $R$, and $\tau$ the time at which $x_j(t)$ collides with $g_t(x_{j+1})$ under the flow.
\end{prop}

We have stated the result for growth from $x_j$ (i.e. under the SLE$(0, x_j, \bfs \rho_j)$ Loewner chain) but the proof makes it self-evident that the analogous result holds when the growth occurs at $x_{j+1}$ (i.e. under the SLE$(0, x_{j+1}, \bfs \rho_{j+1})$ Loewner chain) and $\tau$ is the collision time between $x_{j+1}(t)$ and $g_t(x_j)$.

\begin{proof}
First observe that existence of at least one $j$ such that $\{ x_j, x_{j+1} \} \in \alpha$ is guaranteed by the non-crossing property of the link pattern $\alpha$. Furthermore, given the existence of a single $R \in \CRR_{n+1}(\bfs x; \alpha)$ there exists a $\phi \in \PSL(2, \R)$ such that $\phi \circ R$ has a pole in $(x_j, x_{j+1})$. To see this let $\eta$ be the SLE$(0, x_j, \bfs \rho_j)$ curve from $x_j$ to $x_{j+1}$ and let the $F$ be the face of $\Gamma(R)$ whose boundary is $\eta \cup [x_j, x_{j+1}]$. This face of $\Gamma(R)$, like every other, must have exactly one pole of $R$ on its boundary. This is because $R$ is a bijection of each face onto $\H$ or $-\H$ that extends continuously to the boundary of the face. The boundary being a subset of the real locus means that $R$ maps it to $\widehat{\R}$, hence there must be a pole on the boundary. For this particular face the pole must lie either on $\eta$ or in $[x_j, x_{j+1}]$. The family of post-compositions 
\[
x \mapsto \frac{-1}{R-x}, \quad x \in \R,
\]
continuously maps the pole around $\Gamma(R)$, so choose an $x$ and $\phi(z) = -1/(z-x)$ such that $\phi \circ R$ has the pole in $(x_j, x_{j+1})$. It is convenient to represent the SLE$(0, x_j, \bfs \rho_j)$ process by $\phi \circ R$, meaning the driving function $x_j(t)$ evolves as
\[
\dot{x}_j = U_{\alpha,j}(\bfs x)
\]
but with $U_{\alpha,j}$ determined by \eqref{eq:U_on_CRR} and the pole set $\bfs \zeta = \{ \zeta_1, \ldots, \zeta_{n+1} \}$ of $\phi \circ R$. By Theorem \ref{thm: real_locus} we know that $\phi \circ R \circ g_t^{-1} \in \CRR_{n+1}(\bfs x(t))$ for any $t < \tau$, and from the proof of that same theorem we know that $\phi \circ R \circ g_t^{-1}$ has pole set $\{ g_t(\zeta_1), \ldots, g_t(\zeta_{n+1}) \}$. The proof also shows that the stationary relation is preserved at time $t$, hence
\begin{align}\label{eq:stationary_at_t}
\sum_{l=1}^{2n} \frac{1}{g_t(\zeta_k) - x_l(t)} = \sum_{l \neq k} \frac{2}{g_t(\zeta_k) - g_t(\zeta_l)}, \quad k=1,\ldots,n+1.
\end{align}
Now if $\zeta_m$ is the pole of $\phi \circ R$ in $(x_j, x_{j+1})$ then for all $t < \tau$ we have $x_j(t) \leq g_t(\zeta_m) \leq x_{j+1}(t)$ since the Loewner chain preserves order on the real line. Consider the $m$th equation in \eqref{eq:stationary_at_t} (i.e. the one belonging to $\zeta_m$). The left hand side of this equation stays finite in the $t \uparrow \tau$ limit iff
\begin{align}\label{eq:pole_limit}
g_t(\zeta_m) \sim \tfrac{1}{2}(x_j(t) + x_{j+1}(t)),
\end{align}
since $x_{j+1}(t) - x_j(t) \to 0$ as $t \uparrow \tau$. But the left hand side must stay finite in this limit because the right hand side does; this follows because the other poles $\zeta_k, k \neq m$, are on the boundaries of the other faces, hence are separated from $\zeta_m$ initially and must remain separated as $t \uparrow \tau$. Thus the $t \uparrow \tau$ limit of the equations \eqref{eq:stationary_at_t} for $1 \leq k \leq n+1$ (but $k \neq m$) is
\[
\frac{2}{g_{\tau}(\zeta_k) - x_j(\tau)} + \sum_{\substack{l=1 \\ l \neq j,j+1}}^{2n} \frac{1}{g_{\tau}(\zeta_k) - x_l(\tau)} = \frac{2}{g_{\tau}(\zeta_k) - x_j(\tau)} + \sum_{l \neq k,j} \frac{2}{g_{\tau}(\zeta_k) - g_{\tau}(\zeta_l)}.
\]
Thus the $n$ points $\{ g_{\tau}(\zeta_k), k \neq m \}$, are a solution to the stationary relation for the $2n-2$ points $\{ x_l(\tau), 1 \leq l \leq 2n, l \neq j,j+1 \}$. By Theorem \ref{thm: stationary} there exists an $R_o \in \CRR_{n}(\hat{\bfs x}_j)$ with pole set $\{g_{\tau}(\zeta_k) : k \neq m \}$. To see that $R_o$ is the limit of $R \circ g_{t}^{-1}$ we may assume that $R$ initially has the factorization \eqref{eq:Rprime_product}, from which Theorem \ref{thm: real_locus} implies that
\[
(R \circ g_t^{-1})'(z) = \frac{\prod_{l = 1}^{2n} (z - x_l(t))}{\prod_{\zeta_k \neq \infty} (z - g_t(\zeta_k))^2} 
\]
for $t < \tau$. As $t \uparrow \tau$ the asymptotic relation \eqref{eq:pole_limit} implies that
\[
\lim_{t \uparrow \tau} \frac{(z - x_j(t))(z - x_{j+1}(t))}{(z - g_t(\zeta_m))^2} = 1,
\]
from which it follows that
\[
\lim_{t \uparrow \tau} (R \circ g_t^{-1})'(z) = \frac{\prod_{l \neq j,j+1} (z - x_l(\tau))}{\prod_{\substack{\zeta_k \neq \infty \\ k \neq m} } (z - g_{\tau}(\zeta_k))^2}.
\]
Therefore the limit is a real rational function with critical points $\{ x_l(\tau)$, $l \neq j,j+1 \}$ and poles $\{ g_{\tau}(\zeta_k)$, $k \neq m \}$, since they are already known to satisfy the stationary relation. But this rational function is precisely $R_o$. Therefore the $t \uparrow \tau$ limit of $R \circ g_t$ is in $\CRR_n(g_{\tau}(\hat{\bfs{x}}_j))$. It only remains to show that $\Gamma(R_0) = \hat{\alpha}_j$, which follows from the simple topological fact that deleting the curve anchored at $x_j$ from $\Gamma(R)$ endows the remaining curves with the $\hat{\alpha}_j$ pattern.
\end{proof}

\begin{rmk*}
The assumption of Proposition \ref{prop:merging} that the curve connects adjacent critical points is not crucial. It simplifies the proof since it only requires analyzing the asymptotic behavior of two critical points and one pole under a merging event. If we apply the Loewner chain for a curve connecting non-adjacent critical points then we simply have to analyze the limiting behavior of more critical points and poles. In that case the number of equations that drop out of the stationary relation is one-half the number of critical points between and including the endpoints of the curve. The degree of the limiting rational function also drops by this amount. When the curve completes the poles and critical points that are not swallowed by the curve become the poles and critical points of the limiting rational function. 
\end{rmk*}

The next result is basic but important for the proof of Theorem \ref{thm:RRF_are_geos}.

\begin{lem}\label{lem:base_geo_chord}
Let $x,y \in \R$ be distinct and suppose $R \in \CRR_{2}(\{x,y\})$. Then $\Gamma(R)$ is the hyperbolic geodesic connecting $x$ and $y$ in $\H$.
\end{lem}

\begin{proof}
The set $\CRR_2(\{x,y\})$ is non-empty since we can write down the canonical rational function:
\[
z \mapsto z + \frac{(x-y)^2/4}{z - (x+y)/2}.
\]
The poles of this rational function solve the stationary relation with $\zeta_1 = \infty$, $\zeta_2 = (x+y)/2$. Example \ref{eg: circle} computes directly that the real locus of this rational function is the real line together with the circle that passes through $x$ and $y$ and intersects the real line at right angles. Thus the arc connecting $x$ and $y$ in $\H$ is a semi-circle, which is precisely the hyperbolic geodesic. 

Now we claim that for $R \in \CRR_2(\{x,y\})$ we can always find a $\phi \in \PSL(2, \R)$ such that $\phi \circ R$ is equal to the rational function above, and this proves that $\CRR_2(\{x,y\})$ has only one equivalence class. Note that 
\[
(\phi \circ R)' = (\phi' \circ R) R' = \frac{P'Q - Q'P}{(cP + dQ)^2} = \frac{(z-x)(z-y)}{(cP + dQ)^2},
\]
where $R = P/Q$ and $\phi(z) = (az + b)/(cz + d) \in \PSL(2,\R)$. Since $\deg P, \deg Q \in \{ 1, 2 \}$ and at least one of the two has degree equal to two it is straightforward to find $c,d \in \R$ such that $cP + dQ = z - (x+y)/2$ (this comes down to solving a $2 \times 2$ linear system). This shows that $\phi \circ R$ has the same derivative as the canonical rational function above, which is sufficient to conclude that $\phi \circ R$ can be post-composed into the canonical rational function itself.
\end{proof}

Now we come to the proof of Theorem \ref{thm:RRF_are_geos}. It is recursive and based on a descent argument, and Lemma \ref{lem:base_geo_chord} acts as the base case. The recursive part uses a repeated application of Proposition \ref{prop:merging} together with the well known conformal invariance property of hyperbolic geodesics.

\begin{figure}
\begin{center}
\includegraphics[scale=.54]{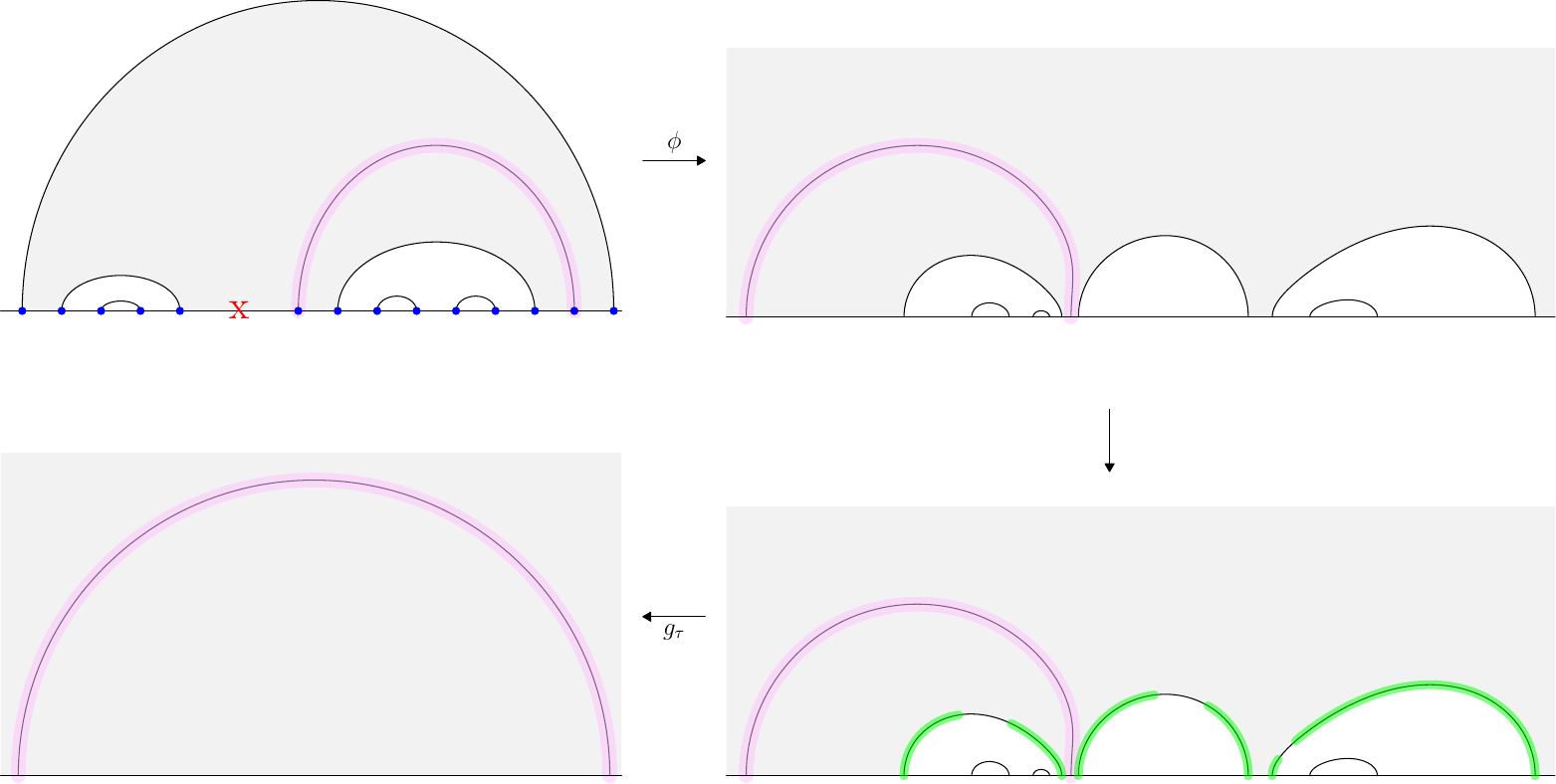}
\captionsetup{width=.95\linewidth}
\caption{A schematic description of the proof of Theorem \ref{thm:RRF_are_geos}. Here $n = 7$ and the critical points are indicated by the blue points in the top left figure. From $R \in \CRR_{n+1}(\bfs x)$ we know that the real locus (in $\overline{\H}$) must connect the blue points according to some non-crossing link pattern; the pattern given in the top left is one such possibility. Note that the shown embedding is only for illustration purposes; it is not the real locus of an actual rational function (such a thing is difficult to compute). The goal is to show that the pink highlighted curve is a geodesic in the grey shaded region. First apply a M\"{o}bius transform of $\H$ to itself such that the shaded region becomes unbounded; this is done so that we can apply the Loewner chain in $\H$. Here the M\"{o}bius transform is an inversion that sends the point marked with an x to infinity. The top right diagram is the image of the curves and the shaded region under this M\"{o}bius transform. The black curves and the pink curve form $\Gamma(R \circ \phi^{-1})$. Now apply Proposition \ref{prop:merging} repeatedly to remove the black curves in the top right, choosing to remove a ``bottom'' curve at each step. At each step the remaining curves are a part of the real locus of a real rational function. In the final step (on the bottom left) everything under the green curves has been removed and only the image of the pink curve remains. We directly compute that it is a hyperbolic geodesic in $\H$. Inverting the intermediate conformal maps and using the conformal invariance of hyperbolic geodesics shows that the original pink curve was a hyperbolic geodesic in the original shaded domain.}
\end{center}
\end{figure}

\begin{proof}[Proof of Theorem \ref{thm:RRF_are_geos}]
Let $\bfs \eta = (\eta_1, \ldots, \eta_n)$ be the ensemble of $n$ curves in $\H$ that connect the points in $\bfs x$ in a non-crossing way. The goal is to show that any $\eta_j$ is the hyperbolic geodesic in the connected component of
\[
\H \backslash \bigcup_{k \neq j} \eta_k
\]
that it belongs to. More precisely, we want to show that it is the hyperbolic geodesic among all curves in that component that connect the two endpoints of $\eta_j$.

First, we will assume without loss of generality that $\eta_j$ is on the boundary of the unbounded connected component of $\H \backslash \Gamma(R)$. If not then there exists a $\phi \in \PSL(2, \R)$ such that $\phi(\eta_j)$ is on the boundary of the unbounded connected component of $\H \backslash \phi(\Gamma(R))$, and since $\phi(\Gamma(R)) = \Gamma(R \circ \phi^{-1})$ and $R \circ \phi^{-1} \in \CRR_{n+1}(\phi(\bfs x))$ one may apply the argument below starting from the real rational function $R \circ \phi^{-1}$ instead. The existence of such a $\phi$ follows by taking $\phi(z) = -1/(z-y)$, where $y \in \R$ is in the interval of $\R \backslash \bfs x$ that is immediately to the left of the left endpoint of $\eta_j$.

Now we begin the descent argument. Topological considerations show that there must exist a curve $\eta_k$ in $\Gamma(R)$ that connects two adjacent elements of $\bfs x$ (adjacent in the usual ordering of $\R$), and this curve can be chosen to be distinct from $\eta_j$. Distinctness from $\eta_j$ follows from the assumption that $\eta_j$ is on the boundary of the unbounded connected component of $\H \backslash \Gamma(R)$. Indeed there is only one link pattern, the nested ``rainbow'' pattern, that has only a single curve connecting two adjacent points of $\bfs x$, and $\eta_j$ cannot be that single curve since we assume that $\eta_j$ borders the unbounded connected component of $\H \backslash \Gamma(R)$. 

Apply the SLE$(0, \bfs \rho)$ Loewner chain of Theorem \ref{thm: real_locus} that generates $\eta_k$ (one can apply it from either endpoint of $\eta_k$). Let $g_{1,t}$ be the Loewner chain, $\tau_1$ be the time at which the curve completes, and denote $h_1 = g_{1, \tau_1}$. By Proposition \ref{prop:merging} we have that $R \circ h_1^{-1}$ is a real rational function of degree $n$ with $2n-2$ critical points. Furthermore we have that
\[
\Gamma(R \circ h_1^{-1}) = h_1(\Gamma(R))
\]
so that $h_1(\eta_j)$ is one of the $n-1$ curves in $\Gamma(R \circ h_1^{-1})$. Since the Loewner chain preserves the point at infinity it follows that $h_1(\eta_j)$ is on the boundary of the unbounded connected component of $\H \backslash \Gamma(R \circ h_1^{-1})$. Therefore we can apply the descent argument again. We apply the Loewner flow for a curve in $\Gamma(R \circ h_1^{-1})$ that connects adjacent points among the $2n-2$ critical points of $R \circ h_1^{-1}$ but that is distinct from $h_1(\eta_j)$. This gives a second conformal map $h_2$ with the properties that $R \circ h_1^{-1} \circ h_2^{-1}$ is a real rational function of degree $n-1$ with $2n-4$ critical points, that
\[
\Gamma(R \circ h_1^{-1} \circ h_2^{-1}) = (h_2 \circ h_1)(\Gamma(R)),
\]
and that $(h_2 \circ h_1)(\eta_j)$ is one of the $n-2$ curves in $\Gamma(R \circ h_1^{-1} \circ h_2^{-1})$ and is on the boundary of the unbounded connected component of $\H \backslash \Gamma(R \circ h_1^{-1} \circ h_2^{-1})$. Continue repeating the descent, at each step choosing a Loewner chain that removes a curve other than the image of $\eta_j$ but that connects two adjacent boundary points. The procedure stops when we have applied $n-1$ Loewner chains and generated a sequence of conformal maps $h_1, \ldots, h_{n-1}$ such that $R_{n-1}$ is a real rational function of degree two with exactly two critical points, where $R_i = R_{i-1} \circ h_i^{-1}$ with $R_0 = R$. Then Lemma \ref{lem:base_geo_chord} implies that $\Gamma(R_{n-1})$ is the hyperbolic geodesic in $\H$ connecting the two critical points of $R_{n-1}$. By construction of the $R_i$ we have
\[
\Gamma(R_{n-1}) = h_{n-1}(\Gamma(R_{n-2})) = \ldots = (h_{n-1} \circ \ldots \circ h_1)(\Gamma(R)).
\]
However the right hand side is exactly $(h_{n-1} \circ \ldots \circ h_1)(\eta_j)$, since at each step we constructed $h_i$ by applying the Loewner chain for a curve other than $\eta_j$. This gives
\[
\eta_j = (h_{n-1} \circ \ldots \circ h_1)^{-1}(\Gamma(R_{n-1})),
\]
and this completes the proof since hyperbolic geodesics are conformally invariant and $\Gamma(R_{n-1})$ has already been shown to be a hyperbolic geodesic in $\H$.
\end{proof}

\subsection{Real Locus as Flow Lines\label{sec:locus_geodesic}}

Thus far we have focused on our new description of Loewner chains for the real locus of $R \in \CRR_{n+1}(\bfs x)$. There is well known and classical theory showing that the curves in the real locus are also flow lines of the vector field $1/R'$. We now briefly recall this theory, and in the concluding remarks of Section \ref{sec: conclusion} we explain how, at least heuristically, it is the classical limit of the \textbf{imaginary geometry} of Miller and Sheffield \cite{MS16:imaginary1, MS16:imaginary2, MS16:imaginary3, MS16:imaginary4}. 

Given a rational function $R \in \CRR_{n+1}(\bfs x)$ associate to it a vector field $v_R$ on $\widehat{\C}$ defined by
\[
v_R(z) = \frac{1}{R'(z)}.
\]
We consider flow lines of the differential equation $\dot{z} = v_R(z)$, and by the factorization of $R'$ given in \eqref{eq:Rprime_product} this equation can be written in the standard coordinate chart of $\C$ as 
\[
\dot{z} = \frac{\prod_{\zeta_k \neq \infty} (z - \zeta_k)^2}{\prod_{i=1}^{2n} (z - x_i)}.
\]
The set $\mathcal{S}_R = \{ \bfs x, \bfs \zeta \}$ are the singularities of this vector field although they are of two distinct types: the poles of $R$ are stationary points of the vector field $v_R$, while $v_R$ blows up at the critical points of $R$. For the sake of simplicity we will assume in this section that the poles and the critical points are fully distinct, i.e. that $\bfs x \cap \bfs \zeta = \emptyset$. The phase portrait of the vector field is largely determined by its \textit{index} at each singular point. The index counts the number of rotations of the vector field as one traverses a small circle around the singular point, i.e. the index $\mathcal{J}(v_R; s_0)$ around critical point $s_0$ is the winding number of the curve $\theta \mapsto v_R(s_0 + r e^{i \theta})$, $0 \leq \theta \leq 2 \pi$. For $R \in \CRR_{n+1}(\bfs x)$ one easily verifies that $\mathcal{J}(v_R; \zeta_k) = 2$ while $\mathcal{J}(v_R; x_i) = -1$. In most modern literature the singular points with index $-1$ are called \textit{saddle points} while those with index $2$ are called \textit{dipoles}. These values agree with the Poincar\'{e}-Hopf index theorem (which states that the total sum of all indices equals the Euler characteristic of the surface) since the vector field $v_R$ lives on the Riemann sphere and
\[
\sum_{s \in \mathcal{S}_R} \mathcal{J}(v_R; s) = 2(n+1) - 2n = 2 - 2 \cdot 0 = \chi(\widehat{\C}).
\]
Flow lines of rational vector fields on the Riemann sphere are well described in several papers, among them \cite{Benzinger, MV95, GGJ:phase_portrait, KR:vector_fields} and the references therein. Since we work in the much simpler special case of rational vector fields that have a rational primitive we can derive a much quicker description of the flow lines. Indeed, for a holomorphic function $f$ it is straightforward to verify that the flow lines of $\dot{z} = 1/f'(z)$ are the level lines of $\Im f$. This fact can be verified by observing that $1/f'(z)$ is orthogonal to the gradient $\overline{\dell} \Im f$, which is in turn orthogonal to the level lines of $\Im f$. Thus $1/f'(z)$ is tangent to the level lines of $\Im f$. Alternatively, one can also verify this fact by noting that it is true for $f(z) = z$ and that vector fields are $(-1,0)$-differentials. Specializing this fact to $f = R$ gives the following: 

\begin{lem}\label{lem:Rprime_flow_lines}
Let $x_1, \ldots, x_{2n}$ be real and distinct and $R \in \CRR_{n+1}(\bfs x)$. The flow lines of $\dot{z} = 1/R'(z)$ are level lines of $\Im R$.
\end{lem}

With this result the phase portrait of the differential equation has an elementary description. Consider the flow lines on each face of the graph generated by $\Gamma(R)$ (recall that the faces are the $2n+2$ open, simply connected regions of the complement of $\Gamma(R)$ in the Riemann sphere). As we mentioned in the proof of Proposition~\ref{prop:merging}, on each face $R$ is a bijection onto either $\H$ or $-\H$. Thus by Lemma \ref{lem:Rprime_flow_lines} the flow lines of $v_R$ within a given face are the pre-images of the horizontal lines in either $\H$ or $-\H$. Since those horizontal lines meet at infinity their pre-images meet at the pole of $R$ that sits on the boundary of the given face. In other words, if $\zeta_j$ is the pole of $R$ that sits the boundary of a face $F$, then every flow line in $F$ moves towards $\zeta_j$ in both forward \textit{and} backward time. See Figures~\ref{figure: circle}, \ref{figure: quadruple1}, \ref{figure: quadruple2}, and \ref{figure: n=3} in Section~\ref{sec: examples} for illustrations of this fact. Another way of stating it is that any face of $\Gamma(R)$ that has $\zeta_j$ on its boundary is a basin of attraction for $\zeta_j$, when it is regarded as a stationary point of the vector field $v_R$. 

Under replacement of $R$ by a post-composition $\phi \circ R$, where $\phi \in \PSL(2, \R)$, the horizontal lines get mapped to circles and the flow lines become the pre-images of these circles by $R$. This is also readily apparent in Figures~\ref{figure: circle}, \ref{figure: quadruple1}, \ref{figure: quadruple2}, and \ref{figure: n=3}. In these figures we also observe that around each critical point the flow lines of $v_R$ that are off of the real locus locally look like hyperbolas that are asymptotic to the coordinate axes.

On the other hand, around each critical point the flow lines on the real locus move toward the critical point in one of the coordinate directions and move away from the critical point in the other direction, i.e. they display the behavior of a saddle point. The real locus of $\Gamma(R)$ is characterized as the flow lines of $v_R$ that move towards the critical points of $R$ in at least one direction. We summarize these results in the next proposition, which is a special case of the more general results in \cite{Benzinger}.

\begin{prop}[\cite{Benzinger}]\label{prop:Rprime_flow_real_locus}
Let $x_1, \ldots, x_{2n}$ be real and distinct and $R \in \CRR_{n+1}(\bfs x)$. For each $z \in \widehat{\C} \backslash \mathcal{S}_R$ the flow line of $1/R'$ passing through $z$ converges in each direction, and the limit is either a critical point or a pole of $R$. Moreover $z \in \Gamma(R)$ iff at least one of the limits of the flow line passing through $z$ is a critical point.
\end{prop}

Another way of summarizing the above results is that the vector field $v_R$ is, on the closure of each face, the pullback of the vector field $1$ by the rational map $R$. This is consistent with vector fields being $(-1,0)$-differentials. The flow lines of the vector field $1$ are also geodesics of the flat metric on the Riemann sphere, and the pullback of these geodesics by $R$ are also geodesics in the pullback metric (this fact is true in great generality). Since intervals on the real line are geodesics in the flat metric on the sphere we immediately obtain the following. 

\begin{prop}\label{prop:locus_as_geodesics}
Any arc within an edge of $\Gamma(R)$ is a geodesic of the pullback metric
\begin{align}\label{eq:Rprime_metric}
ds^2 = |R'(z)|^2 |dz|^2.
\end{align}
\end{prop}

Thus the real locus $\Gamma(R)$ can be characterized as both flow lines of a vector field and as geodesics of a metric. Similar characterizations for random interface curves via random vector fields or random metrics have been of intense interest over the last decade. In the final section we use heuristics from conformal field theory to explain how a certain limit of the Miller-Sheffield imaginary geometry, which uses the Gaussian free field as a vector field to describe random interface curves, naturally leads to the description of $\Gamma(R)$ as the flow lines of $v_R$. We are unsure of how to view the geodesic description of $\Gamma(R)$ as a limit of random geodesics, but we note that in the deterministic setting the geodesic description can be obtained from the flow line description. Indeed, consider vector fields
\[
X_1(z) = \Re \Big( \frac{1}{R'(z)} \partial_z \Big), \quad X_2(z) = -\Im \Big( \frac{1}{R'(z)} \partial_z \Big), 
\]
where $\partial_z = \partial_x - i \partial_y$. The Cauchy-Riemann equations show that the Lie bracket $[X_1, X_2]$ vanishes and so the flows of $X_1$ and $X_2$ locally commute. Classical theory, as explained in \cite{MV95} among other places, implies that there exists a metric tensor $\textsl{g}$ such that $\{X_1,X_2\}$ forms an orthonormal (moving) frame for $\metg$, i.e. $\metg(X_j, X_k) = \delta_{jk}$ for $j,k=1,2$. Under this metric the flow lines of $X_1$ and $X_2$ are geodesics, by construction. In the standard Euclidean chart this metric is expressed as
\[
ds^2 = \metg(\partial_x, \partial_x) \, dx^2 + \metg(\partial_y, \partial_y) \, dy^2 + 2 \metg(\partial_x, \partial_y) dx \, dy = |R'(x+iy)|^2 (dx^2 + dy^2) = |R'(z)|^2 \, |dz|^2,
\]
and therefore agrees with \eqref{eq:Rprime_metric}.
It would be interesting to see if the flow line description of random interface curves could be carried over to a geodesic description in a similar manner, although of course the classical theory would not apply in that case.

\section{Commutation, Conformal Invariance, and the Null Vector Equations \label{sec: commutation}}

We now reconsider the origins of the null vector equations \eqref{eq: NV0} and their somewhat mysterious appearance in this theory. Theorem \ref{thm: Z} constructs a solution set to \eqref{eq: NV0} from the critical points and poles of the canonical rational function in $\CRR_{n+1}(\bfs x; \alpha)$, one such set for each $\alpha$. This is natural if one knows beforehand that solutions to \eqref{eq: NV0} produce driving functions for Loewner chains for the real locus of $R \in \CRR_{n+1}(\bfs x)$, but to date the literature seems only to arrive at the null vector equations \eqref{eq: NV0} by considering $\kappa \to 0$ limits of the BPZ equations for multiple SLE$(\kappa)$. 

In this section we give a purely geometric explanation for the null vector equations that is free of any reference to multiple SLE$(\kappa)$. We show instead that the null vector equations appear naturally when one asks for two desirable properties in a class of Loewner chains: \textbf{geometric commutation} and \textbf{conformal invariance}. The former asks that one can grow the entire ensemble of curves by applying the Loewner chains for each individual curve in any particular order, while the second asks that the Loewner chains generating the curves are invariant under conformal transformations (at least up to a time change). In spirit our derivation of the null vector equations is most similar to the argument of Dub\'{e}dat \cite{Dubedat07} (see also \cite{Graham:multiple_SLE}) who proved that the BPZ equations are a natural consequence of commutation for the multiple SLE$(\kappa)$ curves. We show that together the commutation and conformal invariance properties imply that the dynamics for the Loewner driving function must satisfy the null vector equations \eqref{eq: NV0} and an additional set of algebraic relations known as the \textbf{conformal Ward identities}. This shows that these algebraic relations are a natural consequence of the geometric properties of the curves, and ultimately that multiple SLE$(0)$ curves are natural geometric objects in their own right without relying on any reference to their multiple SLE$(\kappa)$ counterparts, the Brownian loop measure, or even as the real loci of real rational functions. 

We proceed axiomatically. We first specify geometric commutation and conformal invariance as defining properties of an ensemble of curves and then derive the null vector equations and conformal Ward identities for the dynamics of the Loewner driving functions as a consequence of these properties. Loosely speaking, the defining properties we wish to consider are the following.
\begin{enumerate}[(i)]
  \item The $n$ curves in the ensemble are smooth, simple, and disjoint, and each curve connects two distinct boundary point in $\R$. We enumerate these $2n$ boundary points as $\bfs x = (x_1, \ldots, x_{2n})$, and the curves connect them according to one of the $C_n$ possible non-crossing link patterns $\alpha$. 
  \item Each curve can be individually generated by a Loewner chain, and the dynamics of the driving function only depends on the $2n$ marked points $\bfs x$ and the desired link pattern $\alpha$. Moreover each of the $n$ curves can be grown from either one of the boundary points that it connects. To distinguish between the two possibilities we consider the ensemble as $2n$ distinct curves, enumerated as $\gamma_j$, $j=1,\ldots,2n$, where $\gamma_j$ is the curve grown from $x_j$.
  \item The curves \textbf{geometrically commute}, meaning that the same collection of curves can be generated by applying the individual Loewner chains in any chosen order. 
  \item \label{item: Moebius invariance} Each curve $\gamma_j$ is \textbf{M\"{o}bius invariant} in $\H$. This means that if $\gamma_j$ is the curve generated by a Loewner flow and initial data $\bfs x$, then its image $\phi(\gamma_j)$ under a conformal automorphism $\phi$ of $\H$ is, up to a time change, generated by the same flow with initial data $\phi(\bfs x) = (\phi(x_1), \ldots, \phi(x_n))$. 
\end{enumerate}
We now explain these conditions more precisely. Let $\bfs x = \{ x_1, \ldots, x_{2n} \} \subset \R$ be distinct. The $x_i$ will be the collection of $2n$ boundary points for the curves. The first condition implies that the ensemble of $n$ curves is homeomorphic to a multiple SLE$(0; \bfs x; \alpha)$ ensemble (equivalently the real locus of an $R \in \CRR_{n+1}(\bfs x; \alpha)$). We assume without loss of generality that each $\gamma_j$ is parameterized so that $\hcap(\gamma_j[0, t]) = 2t$, although later we will consider other time parameterizations. For assumption (ii) we assume that individually each curve $\gamma_j$ is generated by a Loewner chain of the form
\begin{equation} \label{eq: Loewner1}
\partial_t g_{j,t}(z) = \frac{2}{g_{j,t}(z) - x_j(t)}, \quad g_{j,0}(z) = z, \, x_j(0) = x_j,
\end{equation}
where $t \mapsto x_j(t)$ is the driving function. We assume that the collection of points $\bfs x(t) = (x_1(t), \ldots)$ evolves as an autonomous dynamical system, and we restrict attention to systems of the type
\begin{align}\label{eq: Loewner1_driving}
\dot{x}_j = (\partial_j \mathcal{U})(\bfs x), \quad \dot{x}_k = \frac{2}{x_k - x_j} \textrm{ for } j \neq k.
\end{align}
At this point $\mc{U} : \R^{2n}_d \to \R$ is assumed to be smooth on each connected component of $\R^{2n}_d$, where the latter is the open subset of $\R^{2n}$ in which all coordinates are distinct. Under these dynamics the driving function $x_j(t)$ evolves as specified by $\partial_j \mc{U}$, while the points $x_k(t)$ simply follow the Loewner chain generated by $x_j(t)$. This reflects the fact that $\gamma_j$ is only growing from one point, but the evolution of the points $x_k(t)$ does influence the dynamics of $x_j(t)$ through the function $\mc{U}$. Throughout we use the notation
\begin{align}\label{eq:comm_Uj}
U_j = \partial_j \mc{U}
\end{align}
and observe that smoothness of $\mc{U}$ implies that $\partial_j U_k = \partial_k U_j$ for all $j,k$. We define differential operators corresponding to \eqref{eq: Loewner1_driving} by
\begin{align}\label{eq: Lj_vector_field}
\LieL_j = U_j(\bfs x) \partial_j + \sum_{k \neq j} \frac{2}{x_k - x_j} \partial_k, \quad j=1,\ldots,N.
\end{align}
Note that each $\LieL_j$ and the boundary data $\bfs x = \bfs x(0)$ determines the Loewner chain $t \mapsto g_{j,t}$ and the curve $\gamma_j$. Before stating the theorem relating $U_j$ to the null vector equations we first make a precise definition of the notion of geometric commutation that is expressed in (iii).

\begin{def*}
Let $\gamma_1, \ldots, \gamma_N$ be smooth, simple curves in $\H$ that are generated by Loewner chains of the form \eqref{eq: Loewner1}-\eqref{eq: Loewner1_driving}. The curves are said to \textit{locally} geometric commute if for any pair $1 \leq j \neq k \leq N$, and any $s, t \geq 0$ such that $\gamma_j[0,t]$ and $\gamma_k[0,s]$ do not intersect and individually only intersect the boundary at one point ($x_j$ and $x_k$, respectively), then $g_{j,t}(\gamma_k[0,s])$ is the curve generated by $\LieL_k$ and the initial data $(g_{j,t}(x_1), \ldots, g_{j,t}(x_N))$, at least up to a time change.
\end{def*}

Since the above holds for all $j \neq k$, this condition means that the hull $\gamma_j[0,t] \cup \gamma_k[0,s]$ can be removed by running the two Loewner chains in either order. The only caveat is that the second Loewner chain must be applied with a different time parameterization, since the half-plane capacity of the remaining hull is altered after the application of the first flow. This difference in time parameterizations is the reason for the appearance of the term $(x_k - x_j)^{-2}$ in the following result.

\begin{thm}\label{thm:commutation}
Let $\gamma_1, \ldots, \gamma_{2n}$ be smooth, simple curves in $\H$ that are generated by Loewner chains of the form \eqref{eq: Loewner1}-\eqref{eq: Loewner1_driving}. If the curves locally geometrically commute then $\LieL_j$ satisfy the \textbf{commutation relations}
\begin{align}\label{eq: Lie_CR}
[\LieL_j, \LieL_k] = \frac{4}{(x_k - x_j)^2} ( \LieL_k - \LieL_j ), \quad \textrm{ for all } 1 \leq j \neq k \leq {2n}.
\end{align}
Moreover, under the additional assumption that $\partial_j U_k = \partial_k U_j$ for all $j,k$, the commutation relations hold for $\LieL_j$ of the form \eqref{eq: Lj_vector_field} iff there exists smooth functions $h_1, \ldots, h_{2n} : \R \to \R$ such that
\begin{align}\label{eq: NVE}
\frac{1}{2} U_j^2 + \sum_{k \neq j} \frac{2}{x_k - x_j} U_k - \sum_{k \neq j} \frac{6}{(x_k - x_j)^2} = h_j(x_j), \quad j=1,\ldots,2n.
\end{align}
\end{thm}

The argument for the commutation relation \eqref{eq: Lie_CR} does not rely on the particular form \eqref{eq: Loewner1_driving} of the dynamics for the driving function, or equivalently on formula \eqref{eq: Lj_vector_field} for the differential operators. Instead it is based on properties of the Loewner equation and the half-plane capacity. Our proof is an adaptation of the one found in \cite{Dubedat07} (see also \cite{Graham:multiple_SLE}) that derives the same algebraic commutation relation for multiple SLE$(\kappa)$ curves. In Dub\'{e}dat's case the geometric commutation property of the curves only needs to hold in law and the operators $\LieL_j$ of \eqref{eq: Lj_vector_field} are replaced by the second order differential operators that are the generators of the random driving functions. Equation \eqref{eq: Lie_CR} is essentially a geometric condition, with the particular form of the $\LieL_j$ largely irrelevant. 

Our derivation of \eqref{eq: NVE} differs from Dub\'{e}dat's approach in that it is less computational and more conceptual. Most notably, we derive \eqref{eq: NVE} purely as a consequence of viewing the Loewner evolution of one curve in the coordinate chart induced by the growth of another curve. In contrast, the derivation that Dub\'{e}dat uses for the $\kappa > 0$ analogue of \eqref{eq: NVE} is more algebraic. Essentially, in the $\kappa = 0$ case his approach would be to insert \eqref{eq: Lj_vector_field} into \eqref{eq: Lie_CR} and reduce terms. Our derivation of \eqref{eq: NVE} is somewhat more geometric in spirit, and unifies nicely with our derivation of conformal invariance.

Note that the algebraic description of geometric commutation in \eqref{eq: NVE} only forces that the left hand side of \eqref{eq: NVE} is a function of the single variable $x_j$. From the multiple SLE$(0)$ point of view this is somewhat surprising. For that particular family of curves it is proved by Peltola-Wang \cite[Proposition 1.6]{PW20} that their versions of $U_j$ satisfy \eqref{eq: NV0}; that is they satisfy \eqref{eq: NVE} with $h_j \equiv 0$. This also follows from our Theorems \ref{thm: real_locus} and \ref{thm: Z}. Combining these results with our Theorem \ref{thm:commutation} gives another proof that the multiple SLE$(0)$ curves geometrically commute. However, since \eqref{eq: NVE} is an if and only if statement, Theorem \ref{thm:commutation} also implies that if one can find solutions $U_j$ to \eqref{eq: NVE} for some $h_j \neq 0$, then there are families of geometrically commuting curves that are \textit{distinct} from multiple SLE$(0)$. In this paper we do not make any considerations of this type, but it would be interesting to consider particular examples and their possible relation to other statistical mechanical systems. Instead we focus on the case $h_j \equiv 0$ (\textit{without} knowing beforehand that it corresponds to multiple SLE$(0)$ curves) and show that $h_j \equiv 0$ is forced by the M\"{o}bius invariance requirement on the curves that is laid out in assumption (iv) of p.~\pageref{item: Moebius invariance}.

\begin{thm}\label{thm: hj_is_zero}
If solutions $U_j, j=1,\ldots,2n$, to \eqref{eq: NVE} generate curves that are each invariant under M\"{o}bius transformations then $h_j \equiv 0$ for $j=1,\ldots,2n$.
\end{thm}

Theorem \ref{thm: hj_is_zero} is stated only as invariance under M\"{o}bius transformations, but by a standard construction this is sufficient to obtain full conformal invariance. Indeed, on other simply connected domains the ensemble of curves is defined by conformal mapping, but this is only possible so long as this definition is self-consistent with the group of conformal automorphisms of $\H$ to itself. These conformal automorphisms are precisely the subgroup PSL$(2, \R)$ of fractional linear transformations with real coefficients. Invariance of the curves with respect to PSL$(2, \R)$ is ultimately a statement about the behavior of the $\LieL_j$ under these transformations. Since $\mc{U}$ and $U_j = \partial_j \mc{U}$ are assumed to be defined globally on $\R^{2n}_d$ the $\LieL_j$ generate curves determined by both the boundary data $\bfs x = (x_1, \ldots, x_N)$ and the boundary data $\phi(\bfs x) = (\phi(x_1), \ldots, \phi(x_N))$, where $\phi \in \PSL(2, \R)$. M\"{o}bius invariance is the statement that if $\gamma_j$ is the curve determined by $\LieL_j$ and $\bfs x$ then $\phi(\gamma_j)$ is the curve determined by $\LieL_j$ and $\phi(\bfs x)$, at least up to a time change. To obtain Theorem \ref{thm: hj_is_zero} from this condition we first show it forces that $\mc{U}$ is a pre-pre-Schwarzian form of order $3$ in each variable, and hence each $U_j$ is a pre-Schwarzian form of order $3$ in the $x_j$ variable and a scalar in the remaining ones. These facts imply that the $U_j$ satisfy the system of \textbf{conformal Ward identities} \eqref{eq: CWI0}. The proof of Theorem \ref{thm: hj_is_zero} shows that $h_j \equiv 0$ is necessary for \eqref{eq: NVE} and \eqref{eq: CWI0} to hold simultaneously.

\subsection{Transformation of Loewner Flows under Coordinate Changes} \label{sec:Loewner_flows_charts}
We now prove Theorems \ref{thm:commutation} and \ref{thm: hj_is_zero}: that local commutation and conformal invariance imply that the $U_j$ of \eqref{eq: Loewner1_driving}-\eqref{eq:comm_Uj} satisfy the null vector equations \eqref{eq: NV0}. We first analyze the consequences of local commutation and conformal invariance separately and combine the arguments together at the end of the section. Both local commutation and conformal invariance are instances of essentially the same general property: that the Loewner chain of a curve, when viewed in a different coordinate chart, is a time change of the Loewner chain in the standard coordinate chart but with different initial conditions. For local commutation we study the Loewner chain for $\gamma_j$ in the coordinates induced by the conformal maps $g_{k,t}$ for $k \neq j$, while for conformal invariance we study the Loewner chain for each $\gamma_j$ in the coordinates corresponding to M\"{o}bius transformations of $\H$.

To this end, let us briefly review how Loewner chains tranform under coordinate changes. The following summary is modified from \cite[Section 4.6]{Lawler:book} and \cite[Section 3.2]{Lawler:trieste}. Let $\gamma = \gamma(t)$ be a continuous, non-crossing curve in $\overline{\H}$, and for simplicity we assume that $\gamma(0) = x \in \R$ and $\gamma(0,t] \subset \H$. Assume that $\gamma$ is generated by the Loewner chain
\begin{align}\label{eq:vf_LE}
\partial_t g_t(z; x) = \frac{2}{g_t(z; x) - W_t}, \, \dot{W}_t = v(W_t; g_t(z_1), \ldots, g_t(z_N)) \quad g_0(z) = z, \, W_0 = x.
\end{align}
We only consider the case where $\dot{W}_t = v(W_t; g_t(z_1), \ldots, g_t(z_N))$ for some $v : \R \times \C^N \to \R$. We allow $v$ to depend on the location of a collection of marked points in the flow $g_t$, but to keep the notation brief we only write $\dot{W}_t = v(W_t)$ from now on. Under a conformal transformation $\Phi : \mathcal{N} \to \H$, defined in a neighborhood $\mathcal{N}$ of $x$ such that $\gamma[0,T] \subset \mathcal{N}$ and such that $\Phi$ sends $\partial \mathcal{N} \cap \R$ to $\R$, the Loewner chain of the image curve $\tilde{\gamma}(t) = \Phi \circ \gamma(t)$ is as follows, at least for $0 \leq t \leq T$. Let $\tilde{g}_t$ denote the unique conformal transformation of $\H \backslash \tilde{\gamma}[0, t]$ onto $\H$ that satisfies the normalization $\tilde{g}_t(z) = z + o(1)$ as $z \to \infty$. Letting $h_t = \tilde{g}_t \circ \Phi \circ g_t^{-1}$, it can be computed that $\tilde{g}_t(z)$ evolves as
\[
\partial_t \tilde{g}_t(z; x) = \frac{2 h_t'(W_t)^2}{\tilde{g}_t(z; x) - \tilde{W}_t}, \quad \tilde{g}_0(z) = z, \, \tilde{W}_0 = \Phi(x),
\]
where $\tilde{W}_t = \tilde{g}_t \circ \Phi \circ \gamma(t) = \tilde{g}_t \circ \Phi \circ g_t^{-1}(W_t) = h_t(W_t)$ is the driving function for the new flow. Note that $\tilde{W}_0 = \Phi(W_0) = \Phi(x)$. The equation for $\partial_t \tilde{g}_t(z)$ shows that $\tilde{\gamma}$ is parameterized so that its half-plane capacity satisfies $\operatorname{hcap}(\tilde{\gamma}[0,t]) = 2 \sigma(t)$, where
\begin{align}\label{eq: hcap_reparameterization}
\sigma(t) := \int_0^t h_s'(W_s)^2 \, ds.
\end{align}
Furthermore, by $\tilde{W}_t = h_t(W_t)$ and $\dot{W}_t = v(W_t)$ we compute that when the time evolution of $W_t$ is sufficiently smooth (which is the only case that we consider) then the time evolution of $\tilde{W}_t$ satisfies
\begin{align}
\label{eq: tildeW_dot}
\dot{\tilde{W}}_t = (\partial_t h_t)(W_t) + h_t'(W_t) \dot{W}_t = -3 h_t''(W_t) + h_t'(W_t) v(W_t).
\end{align}
The first part of the final equality comes from \cite[eqn. (4.35)]{Lawler:book}. Thus the driving function for the image curve $\tilde{\gamma}$ follows a vector field that varies with time. Using that $h_0 = \Phi$, we see from \eqref{eq: tildeW_dot} that at $t = 0$ and at the point $\Phi(x)$ (since $\tilde{W}_0 = \Phi(x)$) this vector field is
\begin{align}\label{eq:vf_at_zero}
-3 \Phi''(x) + \Phi'(x) v(x). 
\end{align}
On the other hand, consider the Loewner chain $g_{\sigma(t)}(\cdot; \Phi(x)), t \geq 0$. These are the normalizing maps for the curve started from $\Phi(x)$, whose half-plane capacity is parameterized by \eqref{eq: hcap_reparameterization}, but whose driving function evolves according to $v$. This curve and the image curve are the same iff their Loewner chains are the same, i.e. if $\tilde{g}_t(\cdot; x) = g_{\sigma(t)}(\cdot; \Phi(x))$. Equality of these two chains requires that the driving functions both start at the same position (which is already true since they both start at $\Phi(x)$) and have the same first derivative at $t = 0$. For the image curve the first derivative is given above by \eqref{eq:vf_at_zero}, while for the curve corresponding to $g_{\sigma(t)}(\cdot; \Phi(x))$ the first derivative is $v(\Phi(x)) \sigma'(0) = v(\Phi(x)) \Phi'(x)^2$. Again this used \eqref{eq: hcap_reparameterization} and that $h_0 = \Phi$. Thus a necessary condition for the two curves to be equal is
\begin{align}\label{eq: v_transform}
v(\Phi(x)) \Phi'(x)^2 = -3 \Phi''(x) + \Phi'(x) v(x).
\end{align}
An immediate application of \eqref{eq: v_transform} is the following result on conformal invariance of curves generated by deterministic Loewner flow of the type \eqref{eq:vf_LE}. 

\begin{prop}\label{prop: Mobius_inv}
The curve generated by the Loewner flow \eqref{eq:vf_LE} is invariant under M\"{o}bius transformations iff $v$ is a pre-Schwarzian form of order $3$ in the driving function variable and a scalar field in all other variables.
\end{prop}

The proposition is stated for $v : \R \times \C^N \to \R$ of the form $v(x; z_1, \ldots, z_N)$, where the $z_i$ are other marked points on which $v$ may depend. For a given $\phi \in \PSL(2, \R)$, $v$ being a pre-Schwarzian form of order $3$ and a scalar in all the other variables means that $v$ transforms as
\begin{align}\label{eq: U_is_PS3}
v(\bfs x) = v(\phi(\bfs x)) \phi'(x) + 3 (\log \phi')'(x),
\end{align}
where, as usual, $\phi(\bfs x) = (\phi(x); \phi(z_1), \ldots, \phi(z_N))$. See any book on differential geometry for this definition, or \cite[Lecture 4]{KM13}. 

\begin{proof}
For conformal invariance we use \eqref{eq: v_transform} in which the conformal map $\Phi$ is replaced by a generic $\phi \in \PSL(2, \R)$. By equation \eqref{eq: v_transform} a necessary condition for the generated curve to be conformally invariant is that
\[
v(\phi(\bfs x)) \phi'(x)^2 = -3\phi''(x) + \phi'(x) v(\bfs x).
\]
Rearranging this gives \eqref{eq: U_is_PS3}.
\end{proof}

Finally, we end this section by using the argument of Proposition \ref{prop: Mobius_inv} to complete the proof of Proposition \ref{prop: SLE_zero_rho_curve}, specifically the conformal invariance of the SLE$(0, \bfs \rho)$ curve iff $\int \! \bfs \rho = -6$. A positive $\kappa$ version of this result can be found in several places, for example \cite[Theorem 3]{SW05}.

\begin{proof}[Proof of Proposition \ref{prop: SLE_zero_rho_curve} - conformal invariance]
By definition, specifically \eqref{eq: g}~--~\eqref{eq: zero_rho_driving}, SLE$(0, \bfs \rho)$ Loewner chains are of the form \eqref{eq:vf_LE}, with $v$ given by
\[
v(\bfs x) = v(x; z_1, \ldots, z_N) = \sum_{k=1}^N \frac{\rho_k}{x - z_k}, \quad \rho_k \in \R.
\]
By either \eqref{eq: v_transform} or the argument in the proof of Proposition \ref{prop: Mobius_inv}, the SLE$(0, x, \bfs \rho)$ Loewner flow is invariant under M\"{o}bius transformations of $\H$ to itself iff
\[
v(\phi(\bfs x)) \phi'(x)^2 = -3 \phi''(x) + \phi'(x) v(\bfs x)
\]
holds for all $\phi \in \PSL(2, \R)$ and for all choices of $\bfs x$. Since $\PSL(2,\R)$ is generated by translations, dilations, and inversions, the invariance holds iff it holds in each of these three cases. It is straightforward to check that translations and dilations impose no conditions on the values of $\rho_k$, since it is already true that
\[
v(x+h, z_1+h, \ldots, z_N + h) = v(x, z_1, \ldots, z_N), \quad v(r \bfs x)r = v(\bfs x)
\]
holds for all $h \in \R$, all $r > 0$, and all appropriate $\bfs x$, regardless of the value of $\rho_k$, by the definition of $v$. For the inversion map $\phi(x) = -1/x$, however, it is required that
\[
v(\phi(\bfs x)) x^{-4} = 6 x^{-3} + x^{-2} v(\bfs x).
\]
Straightforward algebra and the definition of $v$ allows us to rewrite this equation as
\[
x^{-3} \left( \sum_{k \neq j} \frac{\rho_k (z_k - x)}{x - z_k} - 6 \right) = 0.
\]
Clearly this holds for all choices of $x$ and $z_i$ iff $\int \! \bfs \rho = -6$.
\end{proof}

\subsection{Derivation of the Null Vector Equations\label{subsec:NVE_deriv}}

We now use the theory of Section \ref{sec:Loewner_flows_charts} to prove Theorems \ref{thm:commutation} and \ref{thm: hj_is_zero}. Throughout these proofs and the rest of the section we use the notation
\begin{equation} \label{eq: Moser's notation}
x_{jk} = x_{jk}(\bfs x) = \begin{cases}
(x_j - x_k)^{-1}, & k \neq j \\
0, & k = j
\end{cases}
\end{equation}
and that these quantities satisfy the identities 
\[
x_{jk} = - x_{kj}, \quad x_{jk}^{-1} + x_{kl}^{-1} = x_{jl}^{-1}, \quad x_{lj} + x_{kl} = x_{jk}^{-1} x_{lj} x_{lk} \textrm{ for } j \neq k.
\]

\begin{proof}[Proof of Theorem \ref{thm:commutation}]
The definition of commutation implies that for sufficiently small $s, t > 0$ the normalizing map for the hull $\gamma_j[0,t] \cup \gamma_k[0,s]$ is the composition of the Loewner maps for each individual hull $\gamma_j[0,t]$ or $\gamma_k[0,s]$, when applied in either order. The first such Loewner map to be applied distorts the half-plane capacity of the remaining hull, which is quantified in \eqref{eq: hcap_reparameterization}. For example, in removing $\gamma_j[0,t]$ we are considering the coordinate change $\Phi = g_{j,t}$, which leads to
\begin{align}\label{eq: sigma_jk}
\sigma_{j,k}^{t,s} = \hcap(\tilde{\gamma}_k[0,s]) = \int_0^s (h_{j,k}^{t,u})'(x_k(u))^2 \, du.
\end{align}
In this case $h_{j,k}^{t,s} = \tilde{g}_{k,s} \circ g_{j,t} \circ g_{k,s}^{-1}$. With this notation in hand, commutation implies that
\[
g_{k, \sigma_{j,k}^{t,s}} \circ g_{j,t} = g_{j, \sigma_{k,j}^{s,t}} \circ g_{k, s}.
\]
On the left hand side the driving function first evolves according to the dynamics of $\LieL_j$ for $t$ units of time and then $\LieL_k$ for $\sigma_{j,k}^{t,s}$ units of time. The right hand side is analogous. These Loewner maps can be the same only if the driving function move to the same position when the maps are applied in either order. In our setup the motion of the driving functions is fully determined by the motion of the points in $\bfs x$, so a necessary condition for these maps to be the same is that
\begin{align}\label{eqn:semigroup_commutation}
e^{\sigma_{j,k}^{t,s} \LieL_k} e^{t \LieL_j} = e^{\sigma_{k,j}^{s, t} \LieL_j} e^{s \LieL_k},
\end{align}
where $t \mapsto e^{t \LieL}$ denotes the flow map corresponding to the dynamics $\LieL$. The commutation relation \eqref{eq: Lie_CR}, as we now explain, is a straightforward consequence of this identity. From \eqref{eq: sigma_jk} we obtain
\begin{align*}
\sigma_{j,k}^{t,s} = s ((h_{j,k}^{t,0})'(x_k) + O(s)) = s (g_{j,t}'(x_j) + O(s)) &= s(1 - 2t x_{jk}^2 + o(t) + O(s)) \\
&= s - 2st x_{jk}^2 + o(st) + O(s^2),
\end{align*}
where the constants in the error terms may depend on $x_j$ and $x_k$. Now use the above to expand \eqref{eqn:semigroup_commutation} in powers of $s$ and $t$, and keeping only those coefficients of $st$ (the first place at which the expansion differs) we obtain the infinitesimal commutation relation \eqref{eq: Lie_CR}. 

Now we prove \eqref{eq: NVE}. One way to proceed is to insert the formula \eqref{eq: Lj_vector_field} for $\LieL_j$ into the commutation relation \eqref{eq: Lie_CR} and reduce terms. Instead we derive \eqref{eq: NVE} directly as an application of \eqref{eq: v_transform}. By \eqref{eq: v_transform} and the preceding discussion, a necessary condition for the curve $t \mapsto g_{k,s}(\gamma_j(t))$ to be the same as the curve determined by $\LieL_j$ and initial data $e^{s \LieL_k} \bfs x$ is that
\begin{align}\label{eq: v_transform_at_s}
g_{k,s}'((e^{s \LieL_k} \bfs x)_j)^2 U_j(e^{s \LieL_k} \bfs x) = -3 g_{k,s}''((e^{s \LieL_k} \bfs x)_j) + g_{k,s}'((e^{s \LieL_k} \bfs x)_j) U_j(\bfs x).
\end{align}
Commutation forces that this relation holds for sufficiently small $s > 0$, and we will now compute that equation \eqref{eq: NVE} is the derivative of this equation at $s = 0$. The computation is by Taylor expansion. On the one hand the flow map $e^{s \LieL_k}$ perturbs the points $\bfs x$ and the functions $U_j$ by
\[
(e^{s \LieL_k} \bfs x)_j = x_j + 2s x_{jk} + O(s^2), \quad U_j(e^{s \LieL_k} \bfs x) = U_j(\bfs x) + s U_k \partial_k U_j(\bfs x) + s \sum_l 2 x_{lk} \partial_l U_j(\bfs x) + O(s^2). 
\]
On the other hand the small $s$ behavior of the Loewner map $g_{k,s}$ is
\[
g_{k,s}(x_j) = x_j + 2s x_{jk} + O(s^2), \,\, g_{k,s}'((e^{s \LieL_k} \bfs x)_j) = 1 - 2s x_{jk}^2 + O(s^2), \,\, g_{k,s}''((e^{s \LieL_k} \bfs x)_j) = 4s x_{jk}^3 + O(s^2).
\]
Inserting the last two equations into \eqref{eq: v_transform_at_s} and collecting terms, there is a term $U_j(\bfs x)$ on each side that cancels, and then equality of the coefficients of $s$ leads to
\begin{align}\label{eq: CRjk_zero}
U_k \partial_k U_j - 2x_{jk}^2 U_j + \sum_l x_{lk} \partial_l U_j + 12 x_{jk}^3 = 0.
\end{align}
Use that $\partial_j U_k = \partial_k U_j$ (which follows from the assumption $U_j = \partial_j \mc{U}$) on the first term to replace it with $U_k \partial_j U_k$. Also define
\begin{align}\label{eq: NVj}
\NV_k = \frac{1}{2} U_k^2 + \sum_{l} 2 x_{lk} U_l - \sum_{l} 6 x_{lk}^2
\end{align}
and then notice that \eqref{eq: CRjk_zero} is equivalent to $\partial_j \NV_k \equiv 0$. This holds for each $j \neq k$, which is in turn equivalent to each function $\NV_j$ only depending on $x_j$.
\end{proof}

Now we turn to the matter of conformal invariance. Since the Loewner chains \eqref{eq: Loewner1}-\eqref{eq: Loewner1_driving} are of the type \eqref{eq:vf_LE}, Proposition \ref{prop: Mobius_inv} implies that the each curve in the ensemble is conformally invariant iff each $U_j$ is a pre-Schwarzian form of order $3$ at $x_j$ and a scalar in all other variables, i.e. that
\begin{align}\label{eq: U_is_PS3_2}
U_j(\bfs x) = U_j(\phi(\bfs x)) \phi'(x_j) + 3 (\log \phi')'(x_j).
\end{align}
Since we assume $U_j = \partial_j \mc{U}$ this implies that $\mc{U}$ is a pre-pre-Schwarzian (PPS) form of order $3$ in each variable. The latter means that $\mc{U}$ transforms as
\begin{align}\label{eq: U_is_PPS3}
\mc{U}(\bfs x) = \mc{U}(x_1, \ldots, \phi(x_j), \ldots, x_{2n}) + 3 \log \phi'(x_j)
\end{align}
in each variable $x_j$. In particular, applying the same relation in each coordinate gives
\begin{align}\label{eq: U_is_PPS_2}
\mc{U}(\bfs x) = \mc{U}(\phi(\bfs x)) + \sum_{k=1}^{2n} 3 \log \phi'(x_k),
\end{align}
and then differentiating both sides with respect to $x_j$ recovers \eqref{eq: U_is_PS3_2}. Equation \eqref{eq: U_is_PPS_2} also shows that $U_j$ satisfy the conformal Ward identities \eqref{eq: CWI0}. The calculation, which is standard, involves applying \eqref{eq: U_is_PPS_2} with respect to the infinitesimal generators $1, z$, and $z^2$ (regarded as vector fields) for translations, dilations, and inversions, respectively. We record this result as the following corollary.

\begin{cor}\label{prop: U_is_PPS}
Assume that $U_j = \partial_j \mc{U}$ and that all $2n$ curves generated by Loewner chains \eqref{eq: Loewner1}-\eqref{eq: Loewner1_driving} are M\"{o}bius invariant. Then $U_j$ solve the conformal Ward identities \eqref{eq: CWI0} and $\mc{U}$ is a pre-pre-Schwarzian form of order $3$ in each variable.
\end{cor}

Hence we have established that our $U_j$ of \eqref{eq: Lj_vector_field} must satisfy the conformal Ward identities \eqref{eq: CWI0} in order to satisfy the axiom of conformal invariance. This puts us in a position to prove Theorem \ref{thm: hj_is_zero}: that the only solutions $U_j$ to both \eqref{eq: NVE} and \eqref{eq: CWI0} are those with $h_j \equiv 0$.

\begin{proof}[Proof of Theorem \ref{thm: hj_is_zero}]
Consider the $1/4$-superposition of the $\LieL_j$ operators, written as a vector field $V : \R^{2n} \to \R^{2n}$ with components
\[
V_j(\bfs x) = \frac{1}{4} U_j(\bfs x) + \sum_k \frac{1}{2} x_{jk}(\bfs x).
\]
Since $\partial_j U_k = \partial_k U_j$ it follows that $\partial_j V_k = \partial_k V_j$, and since $U_j$ are assumed to satisfy the null vector equation \eqref{eq: NVE} a straightforward but tedious computation shows that $\NV_j(\bfs x)$ of \eqref{eq: NVj} becomes
\[
\NV_j(\bfs x) = 8 V_j^2 - 8 \sum_k x_{jk} (V_j + V_k) - 8 \sum_k x_{jk}^2 + 4 \sum_{k,l} (x_{jk} x_{jl} + x_{jl} x_{kl}).
\]
Now assume $\NV_j(\bfs x) = h_j(x_j)$ and sum the above over $j$. The second and fourth terms cancel by symmetry, leading to
\begin{align}\label{eq: Vnorm}
|V(\bfs x)|^2 = \sum_j V_j(\bfs x)^2 = \sum_{j} \sum_k x_{jk}(\bfs x)^2 + \frac{1}{8} \sum_j h_j(x_j).
\end{align}
Furthermore, it is straightforward to verify that the conformal Ward identities \eqref{eq: CWI0} for the $U_j$ become the three equations
\[
\sum_{j} V_j(\bfs x) = 0, \quad \sum_{j} x_j V_j(\bfs x) = n(n-2), \quad \sum_j x_j^2 V_j(\bfs x) = (n-2) \sum_j x_j.
\]
It is sufficient to show that these three equations and \eqref{eq: Vnorm} holding simultaneously forces that $h_j \equiv 0$ for each $j$. To do so we put the dynamics $\dot{\bfs x}(t) = V(\bfs x(t))$ on the space and then consider the evolution of the above conformal Ward identities with respect to time. Note that these dynamics are equivalent to $\dot{x}_j = V_j(\bfs x)$, and by differentiating again and using \eqref{eq: Vnorm} we obtain
\begin{align}\label{eq: Vnorm_ddot}
\ddot{x}_j = \sum_k \partial_k V_j(\bfs x) \dot{x}_k = \sum_k (\partial_j V_k(\bfs x)) V_k(\bfs x) = \frac{1}{2} \partial_j \sum_k V_k(\bfs x)^2 = -2 \sum_k x_{jk}^3 + \frac{1}{16} h_j'(x_j).
\end{align}
The second equality used that $\partial_j V_k = \partial_k V_j$. In Section \ref{sec: CM} we will see that these equations also come from a particular Hamiltonian system. Now from the second conformal Ward identity we have
\begin{align}\label{eq: CWI2_is_zero}
0 = \frac{d}{dt} \sum_j x_j V_j(\bfs x) = \frac{d}{dt} \sum_j x_j \dot{x}_j = \sum_j \dot{x}_j^2 + \sum_j x_j \ddot{x}_j &= \frac{1}{8} \sum_j (\tfrac{1}{2} x_j h_j'(x_j) + h_j(x_j)).
\end{align}
The last equality comes about from inserting \eqref{eq: Vnorm} and \eqref{eq: Vnorm_ddot}, and noting that
\[
0 = \sum_j \sum_k x_{jk}(\bfs x)^2 - 2  \sum_j \sum_k x_j x_{jk}(\bfs x)^3,
\]
which follows from finding a common denominator and then using symmetry. Now in \eqref{eq: CWI2_is_zero} the terms in the final summation are functionally independent (i.e. the $j$th term is a function of $x_j$ alone) so the sum is zero iff each term is a constant, leading to
\[
\frac{1}{2} x_j h_j'(x_j) + h_j(x_j) = C_j \quad j=1,\ldots,2n.
\]
The general solution to this equation is $h_j(x) = C_j + D_j x^{-2}$ for $D_j$ arbitrary. This is the consequence of the second conformal Ward identity. From the first and third conformal Ward identities we obtain
\[
0 = \sum_j \frac{d}{dt} \left(x_j^2 V_j(\bfs x) - (n-2) x_j\right) + (n-2) V_j(\bfs x) = \sum_j \frac{d}{dt} x_j^2 \dot{x}_j = \sum_j 2 x_j \dot{x}_j^2 + x_j^2 \ddot{x}_j.
\]
From \eqref{eq: Vnorm_ddot} the last part of the summation becomes
\[
\sum_j x_j^2 \ddot{x}_j = -2 \sum_{j,k} x_j^2 x_{jk}^3 + \frac{1}{16} \sum_j x_j^2 h_j'(x_j).
\]
For the first part of the summation, use that $\dot{x}_j = V_j$ and $\NV_j(\bfs x) = h_j(x_j)$ to rewrite it as
\[
\sum_j 2 x_j \dot{x}_j^2 = 2 \sum_{j,k} x_j x_{jk} (\dot{x}_j + \dot{x}_k) + 2 \sum_{j,k} x_j x_{jk}^2 - \sum_{j,k,l} x_j x_{jk} x_{jl} + \frac{1}{4} \sum_j x_j h_j(x_j).
\]
The first term on the right hand side is identically zero because
\[
\sum_{j,k} x_j x_{jk} (\dot{x}_j + \dot{x}_k) = \sum_{j < k} (x_j - x_k) x_{jk} (\dot{x}_j + \dot{x}_k) = \sum_{j < k} \dot{x}_j + \dot{x}_k = (2n-1) \sum_j \dot{x}_j = 0, 
\]
the last equality being the first conformal Ward identity. From the last four equations we obtain
\[
0 = 2 \sum_{j,k} (x_j x_{jk}^2 - x_j^2 x_{jk}^3) - \sum_{j,k,l} x_j x_{jk} x_{jl} + \frac{1}{4} \sum_j \left( x_j h_j(x_j) + \frac{1}{4} x_j^2 h_j'(x_j) \right).
\]
However it is easy to see that, due to some basic algebra and symmetry considerations, that
\[
\sum_{j,k} x_j x_{jk}^2 - x_j^2 x_{jk}^3 = - \sum_{j,k} x_j x_k x_{jk}^3 = 0, \quad \sum_{j,k,l} x_j x_{jk} x_{jl} = 0.
\]
This leads to the conclusion that 
\[
0 = \sum_j x_j h_j(x_j) + \frac{1}{4} x_j^2 h_j'(x_j),
\]
and again the terms are functionally independent and so each one must be equal to a constant. However the general solution to the equation
\[
x_j h_j(x_j) + \frac{x_j^2}{4} h_j'(x_j) = c_j
\]
is $h_j(x) = \frac{4}{3} c_j x^{-1} + d_j x^{-4}$ for $d_j$ arbitrary. In conclusion, if the $U_j$ satisfy both the generalized null vector equations \eqref{eq: NVE} and the conformal Ward identities \eqref{eq: CWI0}, then $h_j$ are given by both
\[
h_j(x) = C_j + \frac{D_j}{x^2}, \quad h_j(x) = \frac{4}{3} \frac{c_j}{x} + \frac{d_j}{x^4},
\]
for some constants $c_j, C_j, d_j, D_j$. The only common solution is $h_j \equiv 0$.
\end{proof}

\section{Calogero-Moser Dynamics \label{sec: CM}}

In this section we prove our results related to the Calogero-Moser system. We first show that the first order dynamics of the $1/4$-superposition of the SLE$(0, x_j, \bfs \rho_j)$ processes coming from Theorem \ref{thm: real_locus} can be converted into two autonomous second order Calogero-Moser systems, one for the poles and one for the critical points. This uses explicit computation and the stationary relation. Our arguments are simple but different from those typically encountered in the literature, owing to the non-traditional direction from which we approach the Calogero-Moser system. We then discuss connections with some of the known results about the Calogero-Moser system, in particular its well-known integrability properties via its explicit Lax pair.

\subsection{Calogero-Moser via Poles and Critical Points} \label{subsec:poles_to_CM}

Our first result expresses the first order dynamics of the poles and critical poles under the $1/4$-superposition in a manner that is helpful for later computations.

\begin{prop}\label{prop:CMfromSLE}
Let $\bfs x = \{ x_1, \ldots, x_{2n} \}$ be distinct boundary points and $\bfs \zeta = \{ \zeta_1, \ldots, \zeta_{n+1} \} \subset \widehat{\C}$ be a solution to the stationary relation \eqref{eq: stationary}. Let $\bfs x(t)$ and $\bfs \zeta(t)$ be the evolution of the $x_j$ and $\zeta_k$ under the $1/4$-superposition of the associated SLE$(0, x_j, \bfs \rho_j)$ processes. Then the pair $(\bfs x(t), \bfs \zeta(t))$ forms the closed dynamical system satisfying
\begin{equation} \label{eq: Calogero-Moser1}
\dot x_j = \phantom{-}\sum_{k\ne j} \frac1{x_j-x_k} - \sum_{k=1}^{n+1} \frac1{ x_j-\zeta_k},
\end{equation}
and 
\begin{equation} \label{eq: Calogero-Moser2}
\dot\zeta_k = -\sum_{l\ne k}\frac1{\zeta_k-\zeta_l}+\sum_{j=1}^{2n}\frac1{\zeta_k-x_j}. 
\end{equation}
\end{prop}

\begin{proof}[Proof of Proposition~\ref{prop:CMfromSLE}] 
By \eqref{eq: x2} and \eqref{eq: rhoj} the evolution of $x_j(t)$ under the $1/4$-superposition is 
\begin{equation}\label{eq: x dot}
\dot x_j = \frac14 \left( \sum_{k \neq j} \frac{2}{x_j-x_k} - \sum_{k=1}^{n+1} \frac{4}{x_j - \zeta_k} + \sum_{k\ne j} \frac2{x_j-x_k} \right) = \sum_{k\ne j} \frac1{x_j-x_k} - \sum_{k=1}^{n+1} \frac1{x_j - \zeta_k},
\end{equation}
which shows \eqref{eq: Calogero-Moser1}. On the other hand, since the poles follow the Loewner flow we have $\zeta_k(t) := g_t(\zeta_k(0))$, and therefore
\[
\dot\zeta_k =\dot g_t(\zeta_k(0))= \frac12\sum_{j=1}^{2n}\frac{1}{g_t(\zeta_k(0))-x_j}=\frac12\sum_{j=1}^{2n}\frac{1}{\zeta_k-x_j}.
\]
The stationary relation~\eqref{eq: stationary} implies that 
\begin{equation}\label{eq: zeta dot}
\dot\zeta_k =\sum_{l\ne k}\frac1{\zeta_k-\zeta_l}= -\sum_{l\ne k}\frac1{\zeta_k-\zeta_l}+\sum_{j=1}^{2n} \frac1{\zeta_k-x_j},
\end{equation}
which shows \eqref{eq: Calogero-Moser2}.
\end{proof}

The proof relied on the fact that the stationary relation \eqref{eq: stationary} holds at all times under the evolution of the Loewner chain (before collisions). Preservation of the stationary relation was shown in the proof of Theorem \ref{thm: real_locus}, but it of course relies on the assumption that the stationary relation holds at time zero. 

From Proposition \ref{prop:CMfromSLE} we now prove Theorem \ref{thm: CM-main} that the critical points follow the Calogero-Moser dynamics under the $1/4$-superposition. 

\begin{proof}[Proof of Theorem \ref{thm: CM-main}]
By differentiating \eqref{eq: x dot}, we have 
\[
\ddot x_j = -\sum_{k\ne j} \frac{\dot x_j-\dot x_k}{(x_j-x_k)^2} + \sum_l\frac{\dot x_j-\dot\zeta_l}{(x_j-\zeta_l)^2}.
\]
Using the formula \eqref{eq: x dot} for $\dot x_j, \dot x_k$ and the second equality of \eqref{eq: zeta dot} for $\dot \zeta_l$ we obtain
\begin{align*}
\ddot x_j = &-\sum_{k\ne j}\frac1{(x_j-x_k)^2}\left(\frac2{x_j-x_k} + \sum_{l\ne j,k}\Big(\frac1{x_j-x_l} - \frac1{x_k-x_l}\Big)+ \sum_l \Big(\frac1{\zeta_l-x_j}-\frac1{\zeta_l-x_k}\Big) \right)\\
&+\sum_l \frac1{(x_j-\zeta_l)^2}\left(\sum_{k\ne j} \frac1{x_j-x_k} + \sum_{m=1}^{n} \frac1{\zeta_m-x_j} +\sum_{m\ne l}\frac1{\zeta_l-\zeta_m}-\sum_{k=1}^{2n} \frac1{\zeta_l-x_k} \right).
\end{align*}
Rearranging terms gives 
\begin{align*}
\ddot x_j &+\sum_{k\ne j}\frac2{(x_j-x_k)^3} - \sum_{k\ne j}\sum_{l\ne j,k}\frac1{(x_j-x_k)(x_j-x_l)(x_k-x_l)}\\
&=-\sum_{k\ne j}\sum_{l}\frac1{(x_j-x_k)(x_j-\zeta_l)(x_k-\zeta_l)}+\sum_l \frac1{(x_j-\zeta_l)^2}\sum_{k\ne j} \frac1{x_j-x_k}\\
&+\sum_l \frac1{(x_j-\zeta_l)^2}\left(\sum_{m} \frac1{\zeta_m-x_j} - \sum_{m\ne l}\frac1{\zeta_l-\zeta_m} \right).
\end{align*}
The last term on the right hand side used the stationary relation \eqref{eq: stationary}. Now simplify the right hand side by grouping powers of $(x_j - \zeta_l)^{-2}$, and then use the stationary relation again to obtain
\begin{align*}
\sum_l &\frac1{(x_j-\zeta_l)^2}\left(-\sum_{k\ne j}\frac1{x_k-\zeta_l}+\sum_{m} \frac1{\zeta_m-x_j} - \sum_{m\ne l}\frac1{\zeta_l-\zeta_m} \right)\\
&=\sum_l \frac1{(x_j-\zeta_l)^2}\sum_{m\ne l}\Big(\frac1{\zeta_l-\zeta_m}+ \frac1{\zeta_m-x_j}\Big)= \sum_{l}\sum_{m\ne l}\frac1{(x_j-\zeta_l)(x_j-\zeta_m)(\zeta_l-\zeta_m)}.
\end{align*}
Combining all of the above, we obtain 
\begin{align*}
\ddot x_j +\sum_{k\ne j}\frac2{(x_j-x_k)^3} 
&= \sum_{k\ne j}\sum_{l\ne j,k}\frac1{(x_j-x_k)(x_j-x_l)(x_k-x_l)}\\
&+ \sum_{l}\sum_{m\ne l}\frac1{(x_j-\zeta_l)(x_j-\zeta_m)(\zeta_l-\zeta_m)}.
\end{align*}
The right-hand side cancels by symmetry. 
\end{proof}

Using Proposition \ref{prop:CMfromSLE} we can also show that the second order dynamics of the poles are an autonomous system under the $1/4$-superposition, and that they follow the same Calogero-Moser dynamics as the critical points. 

\begin{cor}\label{cor: CM}
Let $\bfs x = \{ x_1, \ldots, x_{2n} \}$ be real and distinct and $\bfs \zeta = \{ \zeta_1, \ldots, \zeta_{n+1} \} \subset \widehat{\C}$ be symmetric under conjugation and solve the stationary relation \eqref{eq: stationary}. Then under the $1/4$-superposition of the SLE$(0, x_j, \bfs \rho_j)$ processes the poles $\zeta_k$ follow the second order dynamics.
\[
\ddot \zeta_k = - \sum_{l \neq k} \frac{2}{(\zeta_k - \zeta_l)^3}.
\]
\end{cor}

\begin{proof}[Proof of Corollary \ref{cor: CM}]
Differentiating the first equality of \eqref{eq: zeta dot}, we have 
\[
\ddot \zeta_k = -\sum_{l\ne k} \frac{\dot \zeta_k-\dot \zeta_l}{(\zeta_k-\zeta_l)^2}.
\]
Now by using the first equality of \eqref{eq: zeta dot} again for $\dot \zeta_k, \dot \zeta_l$ we obtain
\[
\ddot \zeta_k = -\sum_{l\ne k}\frac1{(\zeta_k-\zeta_l)^2}\Big(\frac1{\zeta_k-\zeta_l} + \sum_{m\ne k,l}\frac1{\zeta_k-\zeta_m} - \frac1{\zeta_l-\zeta_k} - \sum_{m\ne k,l}\frac1{\zeta_l-\zeta_m} \Big)
\]
Rearranging terms gives
\[
\ddot \zeta_k = -\sum_{l\ne k}\frac2{(\zeta_k-\zeta_l)^3}+ \sum_{l\ne k}\sum_{m\ne k,l}\frac1{(\zeta_k-\zeta_l)(\zeta_k-\zeta_m)(\zeta_l-\zeta_m)}.
\]
The last term cancels by symmetry. 
\end{proof}

Now we use Theorem \ref{thm: CM-main} and Corollary \ref{cor: CM} to prove Theorem \ref{parabolic PDE} on the PDE $R_t = R \circ g_t^{-1}$. As a preliminary to Theorem \ref{parabolic PDE} we first prove that the polynomial $Q_t$ evolves according to the backward heat equation. Recall that
\[
Q_t(z) = \prod_k (z-\zeta_k(t)).
\]

\begin{proof}[Proof of Theorem \ref{parabolic PDE}]
Let $L_t(z) = \sum_k \log(z-\zeta_k(t))$. On the one hand, differentiating with respect to $t$ gives
\[
\dot L = -\sum_k \frac{\dot\zeta_k}{z-\zeta_k} = -\sum_k \sum_{l\ne k} \frac1{(z-\zeta_k)(\zeta_k-\zeta_l)}.
\]
On the other hand, differentiating with respect to $z$ gives
\begin{align*}
L''+ L'^2 &= \Big(\sum_k\frac1{z-\zeta_k}\Big)^2 - \sum_k\frac1{(z-\zeta_k)^2} = \sum_k \sum_{l\ne k} \frac1{z-\zeta_k}\frac1{z-\zeta_l}\\
&= \sum_k \sum_{l\ne k} \Big(\frac1{z-\zeta_k}-\frac1{z-\zeta_l}\Big)\frac1{\zeta_k-\zeta_l} \\
&= 2 \sum_k \sum_{l \ne k} \frac{1}{(z - \zeta_k)(\zeta_k - \zeta_l)} = -2 \dot{L}.
\end{align*}
Thus $L_t(z)$ satisfies
\[
\dot L = -\frac12 L''- \frac12 L'^2.
\]
Applying this equation to the exponential of $L_t(z)$ shows that $Q_t(z)$ solves the backward heat equation. Now for the evolution of $R_t$, recall that $R_t(z)=P_t(z)/Q_t(z)$ is a canonical rational function with critical points $x_j(t)$ and poles $\zeta_k(t)$, and therefore
\begin{equation}\label{eq: R_t'}
R_t'(z) = \frac{\prod_{j=1}^{2n}(z-x_j(t))}{\prod_{\zeta_k \neq \infty} (z-\zeta_k(t))^2}.
\end{equation}
Since $R_t \circ g_t = R \circ g_t^{-1} \circ g_t = R$ we have 
\[
\frac{d}{dt} R_t(g_t(z)) \equiv 0.
\]
Let $f_t = g_t^{-1}$. Applying the chain rule the last equation can be rewritten as
\[\frac{\dot R_t(z)}{R_t'(z)} = \frac{\dot f_t(z)}{f_t'(z)} = -\frac12\sum_{j=1}^{2n} \frac1{z-x_j(t)}.
\]
On the other hand by differentiating \eqref{eq: R_t'} with respect to $z$ we obtain
\begin{align*}
-\frac12\sum_{j=1}^{2n} \frac1{z-x_j(t)} = -\frac12\Big(\frac{R_t''(z)}{R_t'(z)} + 2\sum_{\zeta_k \neq \infty} \frac1{z-\zeta_k(t)} \Big) = -\frac12 \frac{R_t''(z)}{R_t'(z)} - \frac{Q_t'(z)}{Q_t(z)}.
\end{align*}
Equating the left hand side to $\dot{R_t}(z)/R_t'(z)$ completes the proof.
\end{proof}

\begin{rmk*}
A standard computation also shows that since $Q_t$ evolves according to the backward heat equation then the meromorphic function $u_t$ having poles at $\zeta_k(t)$, namely
\[
u_t(z) =\frac{Q_t'(z)}{Q_t(z)}=\sum_{k}\frac1{z-\zeta_k(t)},
\]
satisfies the Burgers-Hopf equation $\dot u = -\tfrac{1}{2} u''- uu'$. Regardless of whether one chooses to evolve $Q$ or $u$ the main point is that both evolve autonomously, and the solution can be inserted into the evolution equation for $R_t$.
\end{rmk*}

\subsection{The Hamiltonian Point of View\label{subsec:Hamiltonian}}

Recall that the Calogero-Moser system is also Hamiltonian, with the Hamiltonian $\Ham = \Ham(\bfs x, \bfs p)$ given by \eqref{eq: Ham}. The Hamiltonian is in \textit{normal form}, meaning that it splits into the sum of a kinetic energy (a function of $\bfs p$) and a potential energy (a function of $\bfs x$). The phase space for $(\bfs x, \bfs p)$ is typically taken to be $\{ \bfs x \in \R^{2n} : x_1 < \ldots < x_{2n} \} \times \R^{2n}$, with $\bfs p$ representing the conjugate momenta. Under the standard Hamiltonian equations of motion
\[
\dot{x}_j = \frac{\partial \Ham}{\partial p_j}, \quad \dot{p}_j = - \frac{\partial \Ham}{\partial x_j}
\]
we have $\bfs p = \dot{\bfs x}$. It is straightforward to directly verify that $\dot{\Ham} = 0$ along the Hamiltonian flow (of course this is always true in the Hamiltonian setting), but it is also interesting to note that the Hamiltonian itself is zero along orbits determined by solutions to the null vector equations.

\begin{prop}\label{prop:Hamiltonian_is_zero}
Let the critical points $\bfs x$ evolve via \eqref{eq: x dot} with initial velocities
\[
\dot{x}_j = \frac{1}{4} \left( U_j(\bfs x) + \sum_{k \neq j} \frac{2}{x_j - x_k} \right)
\]
where $U_j$ is a solution to the null vector equation \eqref{eq: NV0} for $j=1,\ldots,2n$. Then $\Ham(\bfs x, \dot{\bfs x}) \equiv 0$.
\end{prop}

Note that the result applies when the $U_j$ are the vector fields for the driving functions corresponding to the $1/4$-superposition of the SLE$(0, x_j, \bfs \rho_j)$ processes and the $\bfs x$ and $\bfs \zeta$ obey the stationary relation. This follows from Theorem \ref{thm: Z}, equation \eqref{eq: U}, and equation \eqref{eq: rhoj} for the $\bfs \rho_j$.

\begin{proof}
Square the equation for $\dot{x}_j$ and use that $U_j$ solves the null vector equations \eqref{eq: NV0} to obtain
\begin{align}\label{eq:Ham_NVE}
\dot{x}_j^2 - \sum_{k \neq j} \frac{\dot{x}_j + \dot{x}_k}{x_j - x_k} + \frac{1}{2} \sum_{k \neq j} \sum_{l \neq k} \frac{1}{(x_j - x_k) (x_j - x_l)} - \sum_{k \neq j} \frac{1}{(x_j - x_k)^2} = 0, \quad j=1,\ldots,2n.
\end{align}
Summing the above over $j$ gives $2 \Ham(\bfs x, \dot{\bfs x}) = 0$, since the second and third terms cancel by symmetry.
\end{proof}

In fact the relation \eqref{eq:Ham_NVE} already provides a proof of Theorem \ref{thm: CM2SLEzero}.

\begin{proof}[Proof of Theorem \ref{thm: CM2SLEzero}]
By invoking any one of \cite{EG02, MTV:Shapiro, EG11, PW20} we know that for each choice of $(\bfs x; \alpha)$ there exists an $R \in \CRR_{n+1}(\bfs x; \alpha)$. Then $U_{\alpha,j}(\bfs x)$ of \eqref{eq: U} exist and, by Theorem \ref{thm: Z}, solve the system of null vector equations \eqref{eq: NV0}. Each $U_{\alpha,j}$, $j=1,\ldots,2n$, is also the formula for the dynamical evolution of the SLE$(0, x_j, \bfs \rho_j)$ process. By Theorem \ref{thm: real_locus} the $1/4$-superposition of these processes generates the real locus of $R$, while by Theorem \ref{thm: CM-main} the second order evolution of the points $\bfs x(t)$ follows the Calogero-Moser dynamics \eqref{eq: CM-main} under this superposition. However, under this superposition the initial velocities $\dot{x}_j(0)$ are specified by \eqref{eq: x dot} which, as in Proposition \ref{prop:Hamiltonian_is_zero}, is the same as
\[
\dot{x}_j = \frac{1}{4} \left( U_{\alpha,j}(\bfs x) + \sum_{k \neq j} \frac{2}{x_j - x_k} \right).
\]
The algebra in equation \eqref{eq:Ham_NVE} completes the argument.
\end{proof}

We have now twice used the algebra in \eqref{eq:Ham_NVE}, but we note that it does not rely on our specific solutions \eqref{eq: U} to the null vector equations. Instead it only assumes that $U_j$ satisfies the null vector equations \eqref{eq: NV0}. We study this idea further in the rest of this section, culminating in the proof of Theorem \ref{thm: CM2SLEzero}. Assume that $\{ (x_j, U_j), j=1,\ldots,2n \}$ are related to $\{(x_j, p_j), j=1,\ldots,2n \}$ via
\begin{align}\label{eq: xdot_u_relation}
p_j = U_j + \sum_{k \neq j} \frac{2}{x_j - x_k}.
\end{align}
Note that we have dropped the factors of $1/4$ appearing in the $1/4$-superposition; this makes the coefficients that appear below somewhat simpler. Then solving for $U_j$ and inserting the result into the left hand side of the null vector equation \eqref{eq: NV0} leads to the identity (already used in \eqref{eq:Ham_NVE})
\begin{align}\label{eq:NVE_for_p}
\frac{1}{2}U_j^2 + \sum_k 2 x_{kj} U_k - \sum_k 6 x_{kj}^2 = \frac{1}{2} p_j^2 - \frac{1}{2} \sum_k (p_j + p_k)x_{jk} + \frac{1}{4} \sum_k \sum_{l \neq k} x_{jk} x_{jl} - \frac{1}{2} \sum_k x_{jk}^2. 
\end{align}
Here we use the notation ~\eqref{eq: Moser's notation} employed by Moser \cite{Moser:CM_integrable} in his derivation of the Lax pair:
\[
x_{jk} = x_{jk}(\bfs x) = \begin{cases}
0, & j = k, \\
(x_j - x_k)^{-1}, & j \neq k.
\end{cases}
\]
To study the null vector equations from the Hamiltonian point of view we make the definition 
\begin{align}\label{eq:Hj}
2 \Ham_j = 2\Ham_j(\bfs x, \bfs p) = p_j^2 - \sum_k (p_j + p_k)x_{jk} + \frac{1}{2} \sum_k \sum_{l \neq k} x_{jk} x_{jl} - \sum_k x_{jk}^2.
\end{align}
We call the collection of $\Ham_j$ the \textbf{null vector Hamiltonians}, and we note that
\[
\sum_{j} \Ham_j = \Ham.
\]
Our next result shows that $\Ham_j$ has a nice interpretation in terms of the Lax pair for the Calogero-Moser system. Recall that the Lax pair is two square matrices $L = L(\bfs x, \bfs p)$ and $M = M(\bfs x, \bfs p)$ each of size $2n \times 2n$, and by \cite{Moser:CM_integrable} the entries are given by
\begin{align}\label{eq:Lax_pair}
L_{jk} = \begin{cases} 
p_j, & j = k, \\
-x_{jk}, & j \neq k, 
\end{cases}
\quad \textrm{ and } \quad
M_{jk} = \begin{cases}
-\sum_{l} x_{jl}^2, & j = k ,\\
x_{jk}^2, & j \neq k.
\end{cases}
\end{align}
This leads to the following representation of $\Ham_j$ in terms of $L^2$.

\begin{lem}\label{lem:NV_Ham_Lax}
Each $\Ham_j$ is the one-half the $j$th row sum of $L^2$, i.e.
\[
\Ham_j = \tfrac{1}{2} \mb{e}_j' L^2 \mb{1},
\]
where $\mb{e}_j'$ is the transpose of the $j$th standard basis vector and $\mb{1}$ is the vector of all ones. Consequently, the $U_j$, $j=1,\ldots,2n$, defined by \eqref{eq: xdot_u_relation} solve the null vector equations \eqref{eq: NV0} for a given $\bfs x$ iff the $\bfs p$ variables satisfy $L^2(\bfs x, \bfs p) \mb{1} = \mb{0}$. 
\end{lem}

\begin{proof}
Write $L = P - X_1$, where $P = P(\bfs p) = \diag(\bfs p)$ is the square matrix with entries of $\bfs p$ along its diagonal, and $X_1 = X_1(\bfs x)$ is the square matrix with entries $(X_1)_{jk} = x_{jk}$. Note that $P$ is symmetric and $X_1$ is anti-symmetric. Then
\[
L^2 = P^2 - P X_1 - X_1 P + X_1^2.
\]
It is straightforward to compute the entries of $P^2 - P X_1 - X_1 P$ and see that they give the first two terms on the right hand side of \eqref{eq:Hj}. For $X_1^2$ we have
\begin{align*}
\mb{e}_j' X_1^2 \mb{1} = \sum_{k} (X_1^2)_{jk} = - \sum_{k} \sum_l x_{jl} x_{kl} &= -\sum_{l} x_{jl}^2 - \sum_{k \neq j} \sum_{l \neq j} x_{jl} x_{kl} \\
&= -\sum_{l} x_{jl}^2 - \frac{1}{2} \sum_{k \neq j} \sum_{l \neq k} (x_{jl} x_{kl} + x_{jk} x_{lk} ) \\
&= - \sum_{l} x_{jl}^2 + \frac{1}{2} \sum_{k \neq j} \sum_{l \neq k} x_{jk} x_{jl}.
\end{align*}
The second-to-last equality simply symmetrized the double sum in $k$ and $l$, while the last equality used the easily verified identity $x_{jk} x_{jl} = -x_{jl} x_{kl} - x_{kj} x_{kl}$ (which holds for $j \neq k$ and $l \neq k$). The concluding statement, that one has solutions $U_j$ to the null vector equations iff one has solutions $\bfs p$ to $L^2(\bfs x, \bfs p) \mb{1} = \mb{0}$, follows exactly from the first result, the algebraic identity \eqref{eq:NVE_for_p}, and the bijection \eqref{eq: xdot_u_relation} between $p_j$ and $U_j$.
\end{proof}

The latter result allows for a study of the null vector equations from a geometric point of view, by considering the submanifold of the phase space given by
\[
\left \{ (\bfs x, \bfs p) \in \R^{2n} \times \R^{2n} : x_1 < \ldots < x_{2n}, \, L^2(\bfs x, \bfs p) \mb{1} = \mb{0} \right \}.
\]
Using the time evolution of the Lax pair we now show that this submanifold is invariant under the Calogero-Moser dynamics.

\begin{prop}\label{prop:Hj_preservation}
For each $c \in \R$, the submanifolds
\[
\left \{ (\bfs x, \bfs p) : \Ham_j(\bfs x, \bfs p) = c \textrm{ for all } j \right \}
\]
are individually preserved under the Hamiltonian system corresponding to the Hamiltonian \eqref{eq: Ham}.
\end{prop}

\begin{proof}
By Lemma \ref{lem:NV_Ham_Lax}, $(\bfs x, \bfs p)$ being on the submanifold is equivalent to $L^2 \mb{1} = c \mb{1}$. We compute the time evolution of this vector from the time derivative of $L^2$, using Moser's result \cite{Moser:CM_integrable} that the Lax pair satisfies the evolution equation
\begin{align}\label{eq:Lax_matrix_evolution}
\dot{L} = [L,M].
\end{align}
Here $[L,M] = LM - ML$ is the standard matrix commutator. The time derivative on the left hand side is with respect to the standard equations of motion determined by the Hamiltonian $\Ham$. Thus
\[
\dot{(L^2)} = L \dot{L} + \dot{L} L = L [L,M] + [L,M] L = [L^2,M],
\]
from which it follows that
\[
\frac{d}{dt} L^2 \mb{1} = L^2 M \mb{1} - M L^2 \mb{1} = - M c \mb{1} = \mb{0}. 
\]
The last line twice used that $M \mb{1} = \mb{0}$, which is apparent from the definition of $M$.
\end{proof}

We do not make any statement about the non-emptiness or structure of these submanifolds. Assuming they have a nice topological structure, their invariance under Hamiltonian flow for \eqref{eq: Ham} is another type of commutation property, which we now show. Recall that to every smooth function $F = F(\bfs x, \bfs p)$ of phase space there is an associated vector field defined by
\[
X_F = \sum_{j=1}^{2n} \frac{\partial F}{\partial p_j} \partial_{x_j} - \sum_{j=1}^{2n} \frac{\partial F}{\partial x_j} \partial_{p_j}.
\]
Given two smooth functions $F = F(\bfs x, \bfs p)$ and $G = G(\bfs x, \bfs p)$ the commutator of their vector fields is 
\[
[X_F, X_G] = X_{\{ F,G \}}
\]
where $\{F,G\}$ is the Poisson bracket defined by
\[
\{F, G \} = \sum_{j=1}^{2n} \frac{\partial F}{\partial p_j} \frac{\partial G}{\partial x_j} - \frac{\partial F}{\partial x_j} \frac{\partial G}{\partial p_j}.
\]
Lengthy and somewhat tedious calculations show that the Hamiltonians $\Ham_j$ of \eqref{eq:Hj} satisfy the Poisson bracket identity
\[
\{ \Ham_j, \Ham_k \} = x_{jk}^{-2} (\Ham_k - \Ham_j)
\]
for all $j,k$. Consequently the vector fields $\Ham_j$ commute along the submanifolds of Proposition \ref{prop:Hj_preservation}.

\section{Examples \label{sec: examples}}

In this section we study several examples of our results, focusing on the cases $n=1, n=2,$ and $n=3$ for which explicit computation is possible. Most of the phenomena present in the higher $n$ case are already present in $n=2$ case. We find explicit formulas for the poles $\zeta_1, \ldots, \zeta_n$ in terms of the critical points $\bfs x$, which we then convert into formulas for the $U_j$ that are purely in terms of the $\bfs x$. This is in contrast to \eqref{eq: U} in Theorem \ref{thm: Z} that represents the solutions in terms of the critical points and the poles, but is only possible because we consider small $n$. For larger $n$ this type of analysis will be much more complicated. We also consider the evolution of $R \circ g_t^{-1}$ of Theorem \ref{parabolic PDE}.

We summarize the results of our examples in Table \ref{tab:examples-table}. To keep the expressions simple we present the results for specific values of the critical points, but explicit formulas for general critical points are worked out in the text. For Examples \ref{eg: circle}, \ref{eg: neighbor}, \ref{eg: rainbow}, and \ref{eg: n=3} we use the canonical rational element to carry out the analysis, which we recall is the unique element of $\CRR_{n+1}(\bfs x; \alpha)$ satisfying the hydrodynamic normalization at infinity. In Example \ref{eg: hyperbola} we consider the $n = 1$ case but for a rational function without a finite pole. Our main results do not describe this case but the analysis can still be carried out. In general, for $R = P/Q$ with $\deg P = n+1$ and $\deg Q = d \leq n+1$ most of our theory still holds, modulo some minor changes to the statements. The primary difference between the cases $d < n$ and $d \in \{ n, n+1 \}$ is that for $d < n$ there are several curves that simultaneously go off to infinity, see Example~\ref{eg: hyperbola}. Example \ref{eg: rainbow} is interesting because $0$ is both a critical point \textit{and} a double pole of the canonical rational function. This type of behavior is not ruled out by any of our theorems, but the example shows that the analysis can be carried out regardless. This emphasizes that our typical assumption of simplicity of the poles is convenient rather than crucial. 

\begin{table}[ht]

\caption{Summary of our examples.}

\begin{tabular}{c|cccc}
& \cellcolor[HTML]{EFEFEF} Critical Points & \cellcolor[HTML]{EFEFEF} Finite Poles & \cellcolor[HTML]{EFEFEF} Rational Function & \cellcolor[HTML]{EFEFEF} Real Locus in $\mathbb{\H}$ \\
\hline
\cellcolor[HTML]{EFEFEF} Example \ref{eg: circle} & $-1,1$ & $0$ & $z + 1/z$ & $x^2 + y^2 = 1$ \\
\cellcolor[HTML]{EFEFEF} Example \ref{eg: hyperbola} & $-1,1$ & none & $z^3 - 3z$ & $y^2 = 3(x^2 - 1)$ \\
\cellcolor[HTML]{EFEFEF} Example \ref{eg: neighbor} & $-3,0,1,2$ & $\pm \sqrt{7/3}$ & $(z^3-3)/(3z^2-7)$ & Fig. \ref{figure: quadruple1} and \eqref{eq: neighbor_locus} \\
\cellcolor[HTML]{EFEFEF} Example \ref{eg: rainbow} & $-3,0,1,2$ & $0,0$ & $(z^3 + 7z - 3)/z^2$ & Fig. \ref{figure: quadruple2} and \eqref{eq: rainbow_locus} \\
\cellcolor[HTML]{EFEFEF} Example \ref{eg: n=3} & $\pm a, \pm 1, \pm 1/a$ & $0,\pm i$ & $z+1/z+(a+1/a)^2\dfrac{z}{1+z^2}$ & Fig. \ref{figure: n=3} and \eqref{eq: n=3} \\
\end{tabular}
\label{tab:examples-table}
\end{table}

\subsection{\texorpdfstring{The $n=1$ case}{The n=1 case}}
We consider the $n=1$ case with critical points $x,y \in \R$.

\begin{eg} \label{eg: circle}
We describe a single curve connecting $x,y$, which turns out to be a semi-circle connecting the two in $\H$. Of course this is already clear from the hyperbolic geodesic property but we show the explicit calculations. From the Loewner equation point of view the semi-circle is really two curves that ``meet in the middle'', one growing from $x$ and the other from $y$. We assume there is a single finite pole $\zeta$, and according to the centroid relation~\eqref{eq: means} we have $\zeta = \frac12(x+y)$. Note this also agrees with the stationary relation \eqref{eq: stationary}. The canonical rational function is
\[
R(z) = z-\frac{(\zeta-x)(\zeta-y)}{z-\zeta}.
\]
By Theorem \ref{thm: Z} the solutions to the null vector equations are
\[
U_1=-6/(x-y), \quad U_2=-6/(y-x).
\]
For simplicity, we set $\mu(t) = \nu(t) \equiv 1/4$ and assume $x \le y$. Then the Loewner dynamics are 
\[
\dot g_t = \frac12 \Big( \frac{1}{g_t-x_t} + \frac{1}{g_t-y_t}\Big), \qquad \dot x_t = -\frac{1}{x_t-y_t}, \quad \dot y_t = \frac{1}{x_t-y_t}.
\]
One can easily check that the Hamiltonian is 
\[
\mathcal{H} = \frac{1}{2}(\dot x_t^2 + \dot y_t^2) - \frac{1}{(x_t-y_t)^2} \equiv 0.
\]
With the specific initial data $x_0=-1,\,y_0=1$, we find 
$x_t, y_t$, and $\zeta_t$ as 
\[
x_t = -\sqrt {1-t},\qquad y_t = \sqrt{1-t}, \qquad \zeta_t \equiv 0, \qquad(0\le t \le 1).
\]
By Lemma \ref{lem: gt_algebraic} we obtain
\[
g_t(z)-\frac{ t-1}{g_t(z)}=z+\frac1z, \qquad \gamma_1(t) = -\sqrt{1-t} + i\sqrt t, \qquad
\gamma_2(t) = \sqrt{1-t} + i\sqrt t, \qquad (0\le t \le 1).
\]
Thus the curves lie on the unit circle and $g_t$ is explicitly given by 
\[
g_t(z) = \frac{1 + z^2 - \sqrt{(1 - z^2)^2 + 4 t z^2 }}{2 z}.
\]
\begin{figure}[ht]
\begin{center}
\includegraphics[width=.33\textwidth]{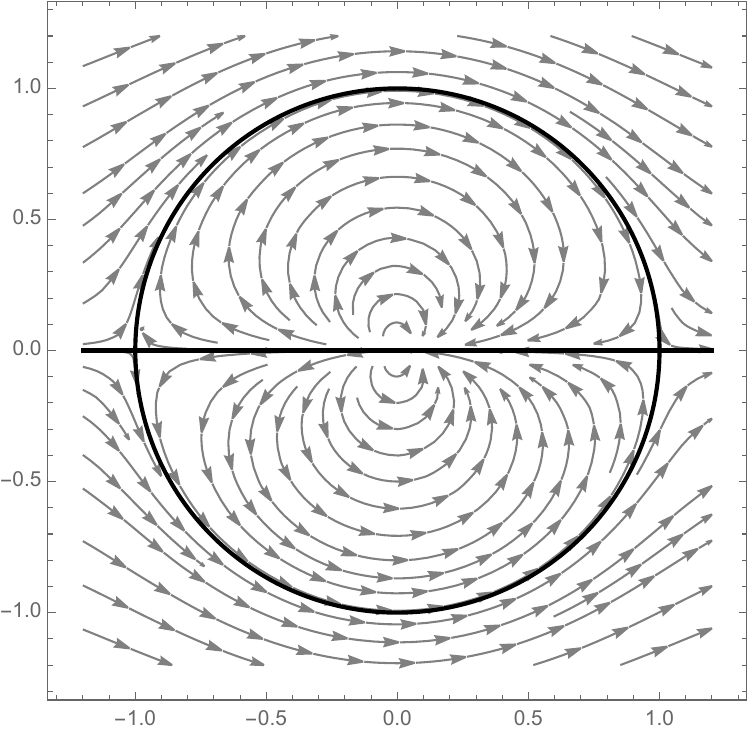}
\hspace{.1\textwidth}
\includegraphics[width=.33\textwidth]{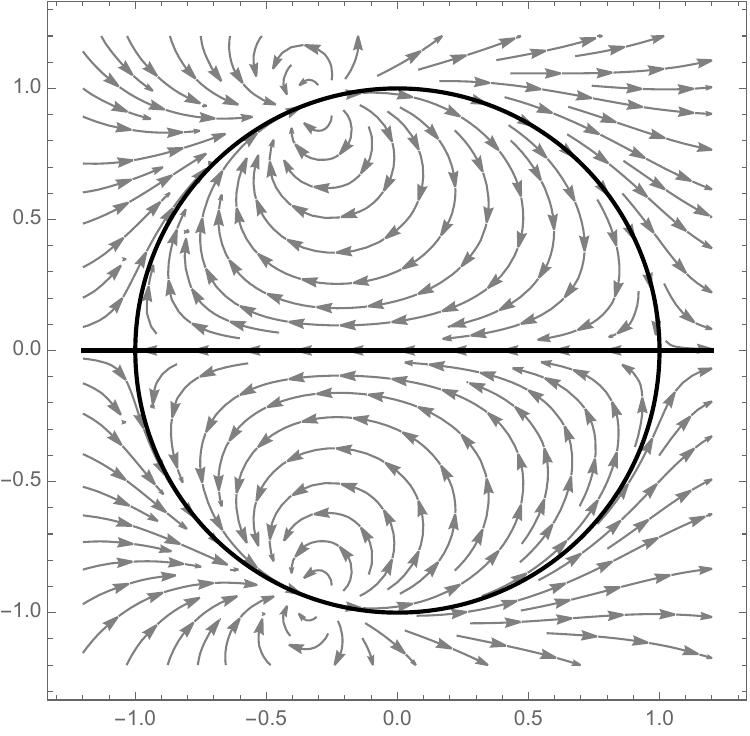} 
\end{center}
\caption{Flow lines of $1/R'$ for Example~\ref{eg: circle}: the left picture is the flow lines for canonical $R$ and the right picture is the flow line for a non-canonical $R$. Arcs on the real locus $\Gamma(R)$, highlighted by the thick dark line, are also flow lines. }
\label{figure: circle}
\end{figure}
The polynomial $Q_t(z) = z$ satisfies the backward heat equation \eqref{eq: backward heat} and the rational function 
\[
R_t(z) = z - \frac{t-1}z
\]
satisfies the following parabolic PDE:
\[
\dot R_t(z) = -\frac12 R_t''(z)- \frac1{z} R_t'(z),
\]
see Theorem~\ref{parabolic PDE}.
\end{eg}

\begin{eg} \label{eg: hyperbola}
We consider the $n=1$ case again with critical points $x,y$, but this time we assume there is no finite pole. Note this does \textit{not} fall into the framework of the results of this paper but we include it anyway. Since there is no pole the corresponding rational function is a polynomial, and we will see that its real locus consists of two curves that start from $x$ and $y$ and merge at infinity. According to Theorem \ref{thm: Z} we set 
\[
U_1=2/(x-y), \quad U_2=2/(y-x).
\]
For simplicity, set $\mu(t) = \nu(t) \equiv 1/4$ and assume $x \le y$. Then the Loewner dynamics are 
\[
\dot g_t = \frac12 \Big( \frac{1}{g_t-x_t} + \frac{1}{g_t-y_t}\Big), \qquad \dot x_t = \frac{1}{x_t-y_t}, \quad \dot y_t = -\frac{1}{x_t-y_t}.
\]
Solving the system of ODEs, we find the solutions $x_t$ and $y_t$ are 
\[
x_t = \frac{x_0+y_0 - \sqrt{4t + (x_0-y_0)^2}}2,\qquad
y_t = \frac{x_0+y_0 + \sqrt{4t + (x_0-y_0)^2}}2.
\]

Let us consider the merging case that $x \equiv x_0 = 0 = y_0 \equiv y$ first. In this case, we have $x_t = -\sqrt{t}$, $y_t = \sqrt{t}$. Applying Lemma \ref{lem: gt_algebraic} we see that $g_t = g_t(z)$ satisfies $g_t^3-3tg_t = z^3$. Therefore the paths $\gamma_1,\gamma_2$ are described by 
\[
\gamma_1(t)= \sqrt[3]{2}\, e^{\frac{2\pi}3i}\,\sqrt{t}, \qquad \gamma_2(t) = \sqrt[3]{2}\,e^{\frac{\pi}3i}\sqrt{t}.
\]
The curves are a pair of symmetric half-lines emanating from the real axis at angles $\pi/3$ and $2 \pi/3$.

\begin{figure}[ht]
\begin{center}
\includegraphics[width=.33\textwidth]{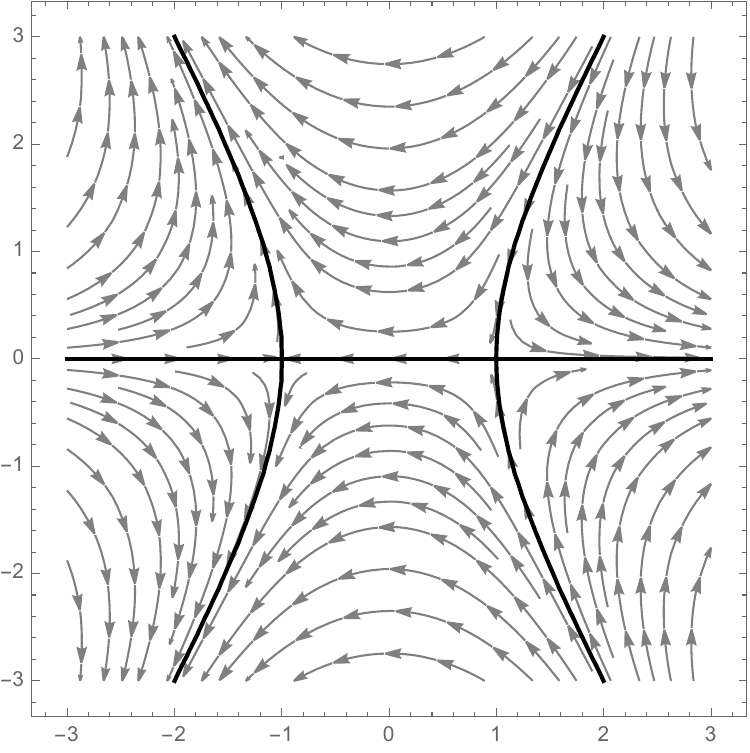}
\hspace{.1\textwidth}
\includegraphics[width=.33\textwidth]{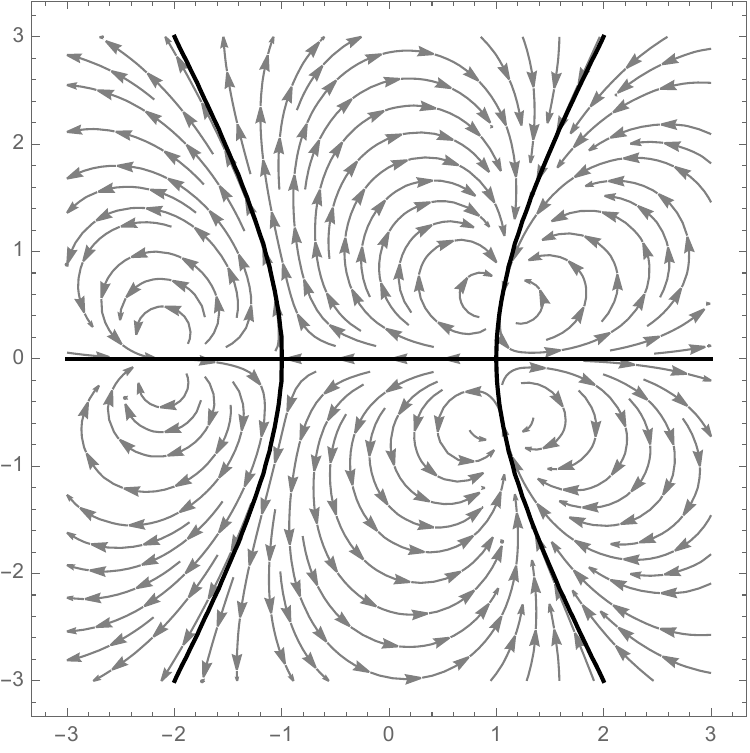} 
\end{center}
\caption{Flow lines of $1/R'$ for Example~\ref{eg: hyperbola}: the left picture is the flow lines for canonical $R$ and the right picture is the flow line for a non-canonical $R$. Arcs on the real locus $\Gamma(R)$ are also flow lines and are highlighted by the thick dark line.}
\label{figure: hyperbola}
\end{figure}

Next, we consider the case that curves start from different points, say $x \equiv x_0=-1, y \equiv y_0=1$. Solving the system of ODEs, we find the solutions $x_t$ and $y_t$ as 
$x_t = -\sqrt{1+t}, y_t = \sqrt{1+t}$. In this case, $z^3-3z$ is an integral of motion and $g_t^3-3(t+1)g_t=z^3-3z$. Furthermore, the SLE(0) paths $\gamma_1,\gamma_2$ are described by 
\[
\gamma_1(t) = -\frac12s+i\frac{\sqrt3}2\sqrt{s^2-4}, \qquad \gamma_2(t) = \frac12s+i\frac{\sqrt3}2\sqrt{s^2-4},
\]
where $s$ is real and $s^3-3s = 2(1+t)\sqrt{1+t}\, (s\ge2)$. The curves lie on the hyperbola
$y^2=3(x^2-1)$.
Both rational functions 
\[
R_t(z) = z^3 - 3tz, \qquad R_t(z) = z^3 - 3(t+1)z
\]
satisfy the backward heat equation:
\[
\dot R_t(z) = -\frac12 R_t''(z),
\]
see Theorem~\ref{parabolic PDE}.
\end{eg}

\begin{rmk*}
The Lax pair associated to the critical points in Examples~ \ref{eg: circle} and \ref{eg: hyperbola} is given by 
\[
L = \begin{pmatrix} \dot x & -\dfrac1{x-y}\\ \dfrac1{x-y} & \dot y \end{pmatrix}, \qquad M = \begin{pmatrix} - \dfrac1{(x-y)^2} & \phantom{-}\dfrac1{(x-y)^2} \\ \phantom{-}\dfrac1{(x-y)^2} & - \dfrac1{(x-y)^2} \end{pmatrix}.
\]
One can verify the equation $\dot L = [L,M]$ directly: 
\[
\dot L = \begin{pmatrix} \ddot x & \dfrac{\dot x-\dot y}{(x-y)^2} \\ -\dfrac{\dot x-\dot y}{(x-y)^2} & \ddot y \end{pmatrix}, \qquad [L,M] = \begin{pmatrix} - \dfrac2{(x-y)^3} & \dfrac{\dot x-\dot y}{(x-y)^2} \\ -\dfrac{\dot x-\dot y}{(x-y)^2} & \dfrac2{(x-y)^3} \end{pmatrix}.
\]
The characteristic polynomial of $L$ is 
$$p_L(\lambda) = \lambda^2 - (\dot x + \dot y)\lambda + \dot x \dot y + \frac1{(x-y)^2}.$$
Thus both $\dot x + \dot y$ and $\dot x \dot y + 1/(x-y)^2$ are integrals of motion. 
\end{rmk*}

\subsection{\texorpdfstring{The $n=2$ case}{The n=2 case}}
We consider the $n=2$ situation with $2$ finite poles. Thus we assume $-\infty < x_1 < x_2 < x_3 < x_4 < \infty$ are the critical points. We will describe the rational function and the real locus for both the \textit{rainbow} pattern ($x_1$ connected to $x_4$ and $x_2$ connected to $x_3$) and the \textit{neighbor} pattern ($x_1$ connected to $x_2$ and $x_3$ connected to $x_4$). As expected, the stationary relation 
\[
\sum_{j=1}^4 \frac1{\zeta_1-x_j} = \frac2{\zeta_1-\zeta_2}, \quad \sum_{j=1}^4 \frac1{\zeta_2-x_j} = -\frac2{\zeta_1-\zeta_2
}
\]
has two pairs of solutions $\{\zeta_1^+,\zeta_2^+\}, \{\zeta_1^-,\zeta_2^-\}$ for the finite poles:
\begin{align}\label{eq: n_two_poles}
\zeta_1^\pm = \frac14\Big(s_1 -\sqrt{s_1^2 - \frac83s_2 \pm \frac83 \sqrt{S} }\Big),\qquad \zeta_2^\pm = \frac14\Big(s_1 + \sqrt{s_1^2 - \frac83s_2 \pm \frac83 \sqrt{S} }\Big),
\end{align}
where 
\begin{align}\label{eq:n_two_S}
S =\Big(\sum_{j<k}\frac2{(x_j-x_k)^2}\Big)\Big/\Big(\sum_j\prod_{k\ne j} \frac1{(x_j-x_k)^2} \Big) = s_2^2-3s_1s_3+12s_4
\end{align}
and 
$s_k$'s are the elementary symmetric functions in the four variables $x_1, x_2, x_3, x_4:$
\[
s_1 = x_1 + x_2 + x_3 + x_4, \quad s_2 = \!\! \sum_{1 \leq j < k \leq 4} \!\! x_j x_k, \quad s_3 = \sum_{j< k < l} x_j x_k x_l, \quad s_4 = x_1 x_2 x_3 x_4.
\]
The plus case corresponds to the neighbor pattern while the minus case corresponds to the rainbow pattern, as we will see shortly. Note that we our solutions only involve two $\zeta$ because we are considering the canonical rational function and so the third pole is at infinity. Inserting $\zeta_1^{\pm}, \zeta_2^{\pm}$ into the system~\eqref{eq: U} to obtain $U_j^{\pm}$, $j=1,2,3,4$, we first claim that the solutions $U_j^{\pm}$ can be entirely expressed in terms of $x_j$'s as 
\begin{equation} \label{eq: Upm}
U_j^\pm = -\sum_{k\ne j}\frac2{x_j-x_k} \mp \frac{4\sqrt{S}}{\prod_{k\ne j}(x_j-x_k)}.
\end{equation}
To see this, let
\[
V_j = \frac{1}{4}\Big(U_j + \sum_{k\ne j}\frac2{x_j-x_k}\Big).
\]
Then, as in \eqref{eq:Ham_NVE}, the null vector equations \eqref{eq: NV0} can be rewritten in terms of $V_j$ as 
\begin{equation} \label{eq: NV04V}
V_j^2-\sum_{k\ne j} \frac{V_j+V_k}{x_j-x_k} + \frac{1}{2} \sum_{k \neq j} \sum_{l \neq k} \frac{1}{(x_j-x_k)(x_j-x_l)} -\sum_{k\ne j}\frac{1}{(x_j-x_k)^2}=0,
\end{equation}
Summing the above over $j=1,\ldots,4$ gives 
\begin{equation} \label{eq: sphere}
\sum_j V_j^2=2\sum_{j<k}\frac1{(x_j-x_k)^2},
\end{equation}
i.e. that $(V_1,\ldots,V_4)$ lies on a sphere. On the other hand, in the case $n=2$, the conformal Ward identities \eqref{eq: CWI0} can be rewritten in terms of $V_j$ as 
\begin{equation}\label{eq: tilde_CWI}
\sum_j V_j = \sum_j x_j V_j = \sum_j x_j^2 V_j = 0. 
\end{equation}
Due to the Vandermonde determinant formula, the intersection of these three hyperplanes is the straight line passing through the origin 
with direction vector
\[
\left( \frac1{\prod_{k\ne1}(x_1-x_k)}, \frac1{\prod_{k\ne2}(x_2-x_k)},\frac1{\prod_{k\ne3}(x_3-x_k)},\frac1{\prod_{k\ne4}(x_4-x_k)} \right).
\]
This line intersects the sphere \eqref{eq: sphere} at two points, so there are at most two solutions $U_j$ to both the null vector equations \eqref{eq: NV0} and the conformal Ward identities \eqref{eq: CWI0}. On the other hand, by Theorems \ref{thm: Z} and \ref{CWI} we already know that \eqref{eq: U} provides at least two solutions to the null vector equations and conformal Ward identities. Thus we have exactly two solutions to \eqref{eq: NV0} and \eqref{eq: CWI0}. It is straightforward to verify that for the proposed solutions \eqref{eq: Upm} the corresponding $V_j^{\pm}$ satisfy the version \eqref{eq: NV04V} of the null vector equations and the conformal Ward identities \eqref{eq: tilde_CWI}, showing that the $U_j^{\pm}$ are in fact equal to the solutions given by Theorem \ref{thm: Z}, up to a determination of the sign which we leave as an exercise for the reader.

Now we consider the dynamics for $x_j$ driven by $U_j^{\pm}$. For simplicity, we set $\nu_j \equiv 1/4$. Then using \eqref{eq: Upm} for $U_j^{\pm}$ we find the dynamics of $x_j$'s to be
\begin{align}\label{eq: n_two_xdot}
\dot x_j = \mp\frac{\sqrt{S}}{\prod_{k\ne j}(x_j-x_k)}.
\end{align}
The following symmetric terms, as the coefficients of the characteristic polynomial of the Lax matrix $L$, are integrals of motion:
\[
\dot x_1 + \dot x_2 + \dot x_3 + \dot x_4, \qquad \sum_{j<k}\Big(\dot x_j \dot x_k + \frac1{(x_j-x_k)^2}\Big),
\]
\[\dot x_1\dot x_2 \dot x_3 + \dot x_1 \Big(\frac1{(x_2-x_3)^2} + \frac1{(x_2-x_4)^2}+\frac1{(x_3-x_4)^2}\Big) + \textrm{ their cyclic terms},
\]
and 
\[
\dot x_1\dot x_2 \dot x_3\dot x_4 + \dot x_1\dot x_2 \frac1{(x_3-x_4)^2} + \frac1{(x_1-x_2)^2(x_3-x_4)^2}+ \textrm{ their cyclic terms}.
\]
Now recall from \eqref{eq:n_two_S} that $S$ is expressible in terms of the elementary symmetric functions $s_1, s_2, s_3, s_4$ in the four variables $x_1, x_2, x_3, x_4$.
From these formulas and \eqref{eq: n_two_xdot} we obtain, by symmetry, 
\begin{align*}
\dot s_1 &= \mp\sqrt{S} \sum_j \frac1{\prod_{k\ne j}(x_j-x_k)}=0, \\
\dot s_2 &= \mp\sqrt{S} \sum_j \frac{\sum_{k\ne j} x_k}{\prod_{k\ne j}(x_j-x_k)}=0, \\
\dot s_3 &= \mp\sqrt{S} \sum_j \frac{(\prod_{k\ne j} x_k)\sum_{k\ne j} 1/x_k}{\prod_{k\ne j}(x_j-x_k)}=0,
\end{align*} 
and
\[
\dot s_4 =\mp\sqrt{S} \sum_j \frac{\prod_{k\ne j} x_k}{\prod_{k\ne j}(x_j-x_k)}= \pm\sqrt{S}.
\]
Now let $R$ generically denote the canonical element (generically meaning that at this point use it for both the canonical element of the rainbow pattern and the neighbor pattern). Now we can describe $\dot g_t$ and $(R \circ g_t^{-1})'$ in terms of $s_j(t)$, obtaining
\begin{equation}\label{eq: g4quadruple}
\dot g_t = \frac12\sum\frac1{g_t-x_j(t)} = \frac12\frac{4g_t^3-3s_1(t)g_t^2+2s_t(t)g_t-s_3(t)}{g_t^4-s_1(t)g_t^3+s_2(t)g_t^2-s_3(t)g_t + s_4(t)}
\end{equation}
and 
\[
(R \circ g_t^{-1})'(z) = \frac{\prod_j(z-x_j(t))}{(z-\zeta_1^\pm(t))^2(z-\zeta_2^\pm(t))^2 } = \frac{z^4-s_1(t)z^3+s_2(t)z^2-s_3(t)z + s_4(t)}{\big(z^2-s_1(t)z/2+(s_2(t)\mp\sqrt{S(t)})/6)\big)^2}.
\]
Due to the translation invariance we may assume that $s_1(0) = 0.$ Then $s_1(t) \equiv 0$ and 
\begin{equation}\label{eq: R4quadruple}
(R \circ g_t^{-1})(z) = \frac{z^3 + z s_4(t)/\sigma(t) + s_3(t)/2}{z^2+\sigma(t)}, \qquad \sigma(t):=\frac{s_2(t)\mp\sqrt{S(t)}}6,
\end{equation}
where $S(t) = s_2(t)^2+12s_4(t)$.

Up to this point the calculations hold for generic critical points $x_1, x_2, x_3, x_4$. To make the remaining calculations simpler we now consider the dynamics of $x_j$, $S$, and $s_j$ with the specific initial data: 
\[
x_1(0) = -3, \quad x_2(0)=0,\quad x_3(0)=1, \quad x_4(0)=2.
\]
Then we have $s_1\equiv0$, $s_2 \equiv -7$, $s_3 \equiv -6$, and
\[
\dot s_4= \pm\sqrt{S}=\pm\sqrt{49+12s_4}.
\]
Solving this ODE, we find $s_4 = 3t^2\pm7t$ and $S(t) = (6t\pm7)^2$. We take up the plus case in the next example and the minus case in Example \ref{eg: rainbow}.

\begin{eg} \label{eg: neighbor}
We first consider the case $s_4 = 3t^2+7t$.
By \eqref{eq: g4quadruple} and \eqref{eq: R4quadruple}, $g_t$ is described by the following ODE: 
\[
\dot g_t = \frac{2g_t^3- 7g_t+3}{g_t^4 - 7g_t^2 + 6g_t + 3 t^2 +7t}
\]
and $R^+(z) = (z^3-3)/(z^2-7/3)$ is its integral of motion:
\[
\frac{g_t^3-3tg_t-3}{g_t^2-t-7/3}=\frac{z^3-3}{z^2-7/3}.
\]
The nontrivial solutions to $R^+(z) = R^+(u)$ are found as 
\[
z =\frac{-9+7 u\pm\sqrt{3} \sqrt{27+42 u-49 u^2-36 u^3+28 u^4}}{2 (-7+3 u^2)}.
\]
Eliminating the parameter $u$, the real locus of $R^+$ in $\mathbb{H}$ is given by 
\begin{align}\label{eq: neighbor_locus}
y=\sqrt{\frac{1}{6} (-7-6 x^2+\sqrt{49+24 x (-9+14 x)})},\qquad(-3\le x \le 0, 1\le x\le 2).
\end{align}
See Figure~\ref{figure: quadruple1} for its graph. We see that the graph is the neighbor pattern $\{ \{x_1, x_2\}, \{x_3, x_4 \} \}$. Furthermore the polynomial $Q_t(z) = 3z^2 -3t-7$ satisfies the backward heat equation \eqref{eq: backward heat} and the rational function 
\[
R_t(z) = \frac{z^3-3tz-3}{3z^2-3t-7}
\]
satisfies the parabolic PDE
\[
\dot R_t(z) = -\frac12 R_t''(z)- \frac{6z}{3z^2-3t-7} R_t'(z).
\]

\begin{figure}[ht]
\begin{center}
\includegraphics[width=.33\textwidth]{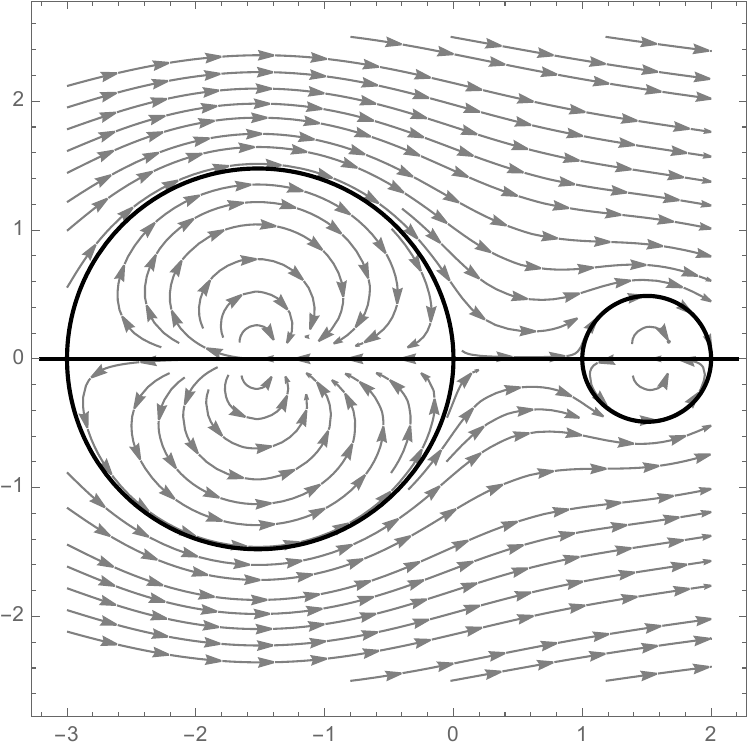}
\hspace{.1\textwidth}
\includegraphics[width=.33\textwidth]{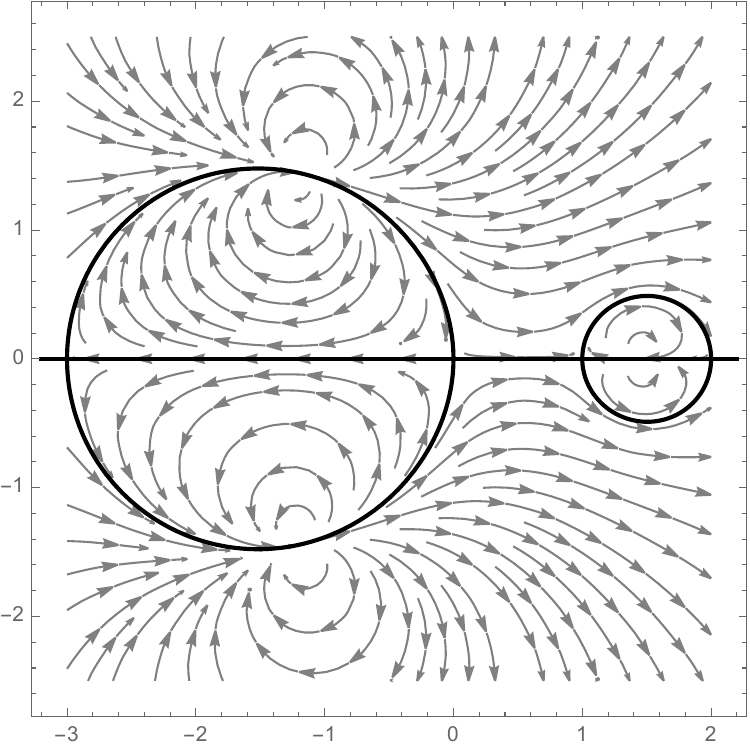} 
\end{center}
\caption{The real locus of $R=R^+$ in Example~\ref{eg: neighbor} and the flows of $1/R'$ for canonical $R$ (on the left) and non-canonical $R$ (on the right)}
\label{figure: quadruple1}
\end{figure}

\end{eg}

\begin{eg} \label{eg: rainbow}
We next consider the case $s_4 = 3t^2-7t$.
By \eqref{eq: g4quadruple} and \eqref{eq: R4quadruple}, $g_t$ is described by the following ODE: 
\[
\dot g_t = \frac{2g_t^3- 7g_t+3}{g_t^4 - 7g_t^2 + 6g_t + 3 t^2 -7t}
\]
and $R^-(z) = (z^3+7z-3)/z^2$ is its integral of motion:
\[
\frac{g_t^3+(7-3t)g_t-3}{g_t^2-t}=\frac{z^3+7z-3}{z^2},\qquad (0\le t \le \frac{7}{6}).
\]
The nontrivial solutions to $R^-(z) = R^-(u)$ are found as 
\[
z=\frac{-3+7 u\pm\sqrt{9-42 u+49 u^2-12 u^3}}{2 u^2}.
\]
Eliminating the parameter $u$, the real locus of $R^-$ in $\mathbb{H}$ is given by 
\begin{align}\label{eq: rainbow_locus}
y = \begin{cases} \sqrt{-x^2+\frac12(7+\sqrt{49-24x}\,)},\qquad (-3\le x \le 2),\\
\sqrt{-x^2+\frac12(7-\sqrt{49-24x}\,)},\qquad (0\le x \le 1).\end{cases}
\end{align}
See Figure~\ref{figure: quadruple2} for its graph. We see that the graph is the rainbow pattern $\{ \{x_1, x_4\}, \{x_2,x_3\} \}$. The polynomial $Q_t(z) = z^2-t$ satisfies the backward heat equation \eqref{eq: backward heat} and the rational function 
\[
R_t(z) = \frac{z^3+(7-3t)z-3}{z^2-t}
\]
satisfies the following parabolic PDE:
\[
\dot R_t(z) = -\frac12 R_t''(z)- \frac{2z}{z^2-t} R_t'(z),
\]

\begin{figure}[ht]
\begin{center}
\includegraphics[width=.33\textwidth]{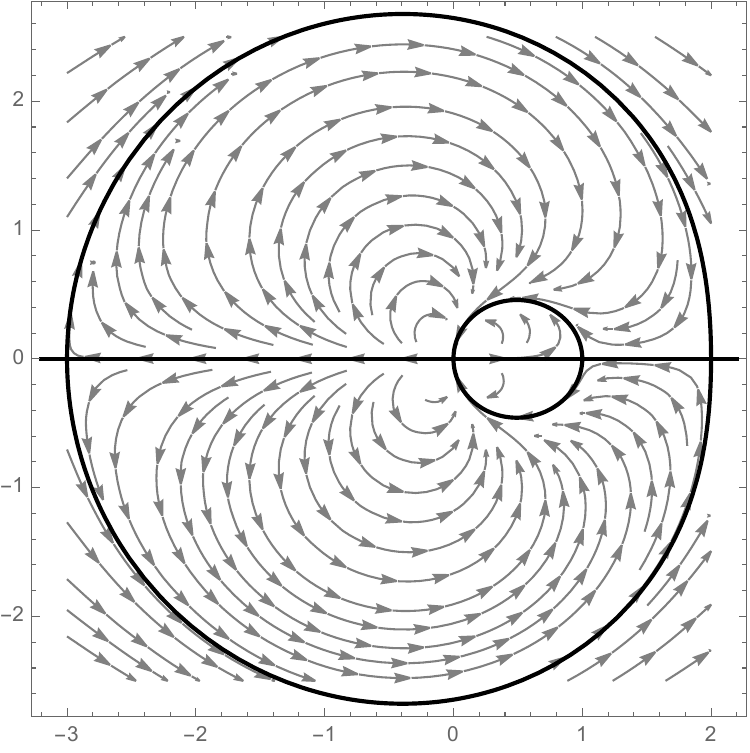}
\hspace{.1\textwidth}
\includegraphics[width=.33\textwidth]{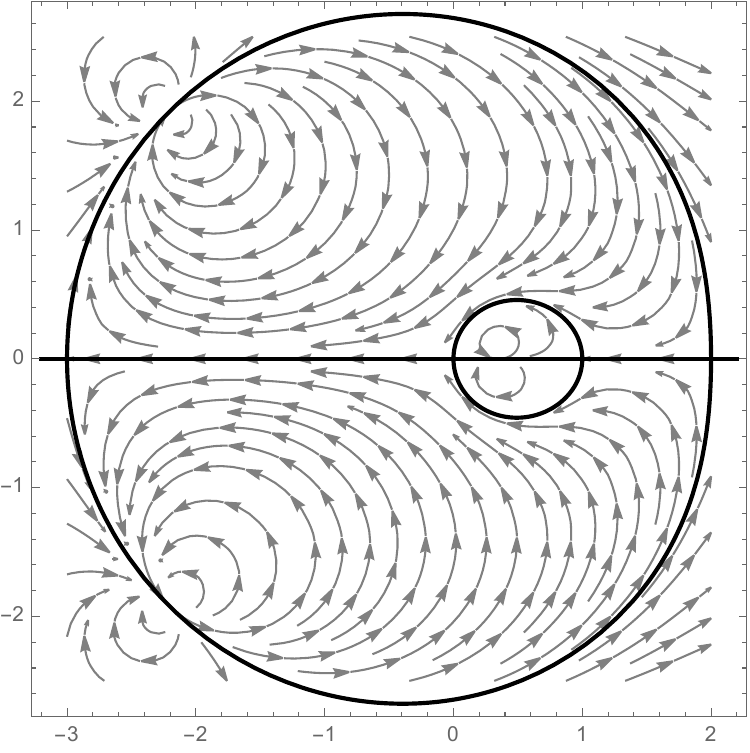} 
\end{center}
\caption{The real locus of $R=R^-$ in Example~\ref{eg: rainbow} and the flows of $1/R'$ for canonical $R$ (on the left) and non-canonical $R$ (on the right).}
\label{figure: quadruple2}
\end{figure}
\end{eg}

\subsection{\texorpdfstring{Verifying $\kappa \to 0$ Limits for $n=2$}{Verifying kappa to 0 Limits for n=2}}
In the $n=2$ case we have the explicit formulas \eqref{eq: n_two_poles} $\zeta_1^{\pm}$ and $\zeta_2^{\pm}$ for the poles. We use them to analyze the functions $Z^{\pm} = Z^{\pm}(x_1, x_2, x_3, x_4)$ of Theorem \ref{thm: Z}. Definition \eqref{eq: Z} of $Z^{\pm}$ tells us
\begin{equation} \label{eq: Zpm}
Z^\pm = \frac{(\zeta_1^\pm-\zeta_2^\pm)^8\prod_{j<k}(x_j-x_k)^2}{\prod_{j} (x_j-\zeta_1^\pm)^4(x_j-\zeta_2^\pm)^4}.
\end{equation}
We now use \eqref{eq: n_two_poles} to simplify $Z^\pm$ (up to a multiplicative constant) as 
\begin{align*}
Z^+ &= \frac{(x_3-x_1)^2(x_4-x_1)^2(x_3-x_2)^2(x_4-x_2)^2 }{(x_2-x_1)^6(x_4-x_3)^6} \Bigg(\frac{H^+_{\{\{1,2\},\{3,4\}\}} }{H^-_{\{\{1,3\},\{2,4\}\}}H^-_{\{\{1,4\},\{2,3\}\}}} \Bigg)^4,
\\
Z^- &= \frac{(x_2-x_1)^2(x_3-x_1)^2(x_4-x_2)^2(x_4-x_3)^2 }{(x_4-x_1)^6(x_3-x_2)^6} \Bigg(\frac{H^-_{\{\{1,4\},\{2,3\}\}} }{H^+_{\{\{1,3\},\{2,4\}\}}H^+_{\{\{1,2\},\{3,4\}\}}} \Bigg)^4,
\end{align*}
where $H_{\{\{j,k\},\{l,m\}\}}^\pm = H_{\{\{j,k\},\{l,m\}\}}^\pm(x_1, x_2, x_3, x_4)$ is a homogeneous function of degree 2 given by 
\[
H_{\{\{j,k\},\{l,m\}\}}^\pm = \pm2\sqrt{S} - (x_j+x_k)(x_l+x_m)+2(x_j x_k + x_l x_m).
\]
For $x_1<x_2<x_3<x_4$, $Z^\pm$ can be rewritten as 
\[
Z^+ = \frac{(1-z)^2}{(x_1-x_2)^6(x_3-x_4)^6}F_0(1-z), \qquad
Z^- = \frac{z^2}{(x_1-x_4)^6(x_2-x_3)^6}F_0(z), 
\]
where $z$ is a cross-ratio 
\[
z = \frac{(x_2-x_1)(x_4-x_3)}{(x_4-x_2)(x_3-x_1)}
\]
and $F_0$ is given by 
\[
F_0(z) = \frac{(1-z)^8}{\big((z+1)(z-1/2)(z-2)+(1-z+z^2)\sqrt{1-z+z^2}\big)^4}
\]
up to a multiplicative constant. 

Now we compare $Z^{\pm}$ with their analogues in the $\kappa > 0$ (but small) case. In this case the analogues of $Z^{\pm}$ are the so-called \textbf{pure partition functions}, which we denote by $\mathcal{Z}^{\pm}_{\kappa}$. When the logarithmic derivative of these partition functions is used as an interaction term in the driving functions of a particular stochastic Loewner system, it is known that it produces multiple SLE$(\kappa)$ curves that connect $x_1, x_2, x_3, x_4$ almost surely according to the neighbor pattern in the $\mathcal{Z}^+_{\kappa}$ case and the rainbow pattern in the $\mathcal{Z}^{-}_{\kappa}$ case. Explicit expressions for $\mathcal{Z}_\kappa^{\pm}$ are known: 
\begin{align*}
\mathcal{Z}_\kappa^+ &= (x_2-x_1)^{-2h}(x_4-x_3)^{-2h} (1-z)^{2/\kappa}F(1-z)/F(1) \\
\mathcal{Z}_\kappa^- &= (x_4-x_1)^{-2h}(x_3-x_2)^{-2h} z^{2/\kappa}F(z)/F(1), 
\end{align*}
where $h=(6-\kappa)/(2\kappa)$ and $F$ is a hypergeometric function:
\[
F(z) = \,_2F_1 \Big(\frac4\kappa,1-\frac4\kappa,\frac8\kappa;z\Big),
\]
see e.g., \cite{PelWu:multiple_SLEs,LK:configurational_measure, Lawler:part_functions_SLE, Lawler:PC}. Peltola and Wang \cite[Corollary 5.12]{PW20} showed that $-\frac1{12}\kappa \log \mathcal{Z}_{\kappa}^\pm$ converges as $\kappa \to 0$ to what they call the \textit{minimal potential}, the minimum value of a certain functional on ensembles of curves. From the relation between the minimal potential and $U_j$'s (see \cite[(1.8)-(1.10)]{PW20}), we have 
\[
\lim_{\kappa\to0}F(z)^\kappa = F_0(z),\qquad \lim_{\kappa\to0}(\mathcal{Z}_\kappa^\pm)^\kappa = Z^\pm
\]
up to a common multiplicative constant. This can be verified directly using the expressions above together with properties of hypergeometric functions.

\subsection{\texorpdfstring{The $n=3$ case}{The n=3 case}} 
We consider the $n=3$ case with $3$ finite poles and a single pole at infinity. Since $C_3 = 5$ there are five distinct solutions to the stationary relation for any $\bfs x$. We consider the particular set of critical points $\bfs x$ given by 
\[x_1 = -1/a, \quad x_2 = -1,\quad  x_3 = -a, \quad x_4 = a, \quad  x_5 =1, \quad x_6=1/a, \quad (0<a<1).
\] 
By restricting to sets $\bfs \zeta = \{ \zeta_1, \zeta_2, \zeta_3 \}$ that are symmetric under both horizontal and vertical reflections we find three distinct solutions to the stationary relation:
\begin{equation} \label{eq: 3R}
\{0,\pm i\},
\end{equation}
\begin{equation} \label{eq: 3N}
\Big\{0, \pm\frac1{\sqrt 6}\sqrt{\big(a+\frac1a\big)^2+2 + \sqrt{\Big(\big(a+\frac1a\big)^2+2\Big)^2-36}}\Big\},
\end{equation}
\begin{equation} \label{eq: 3RN}
\Big\{0, \pm\frac1{\sqrt 6}\sqrt{\big(a+\frac1a\big)^2+2 - \sqrt{\Big(\big(a+\frac1a\big)^2+2\Big)^2-36}}\Big\}. 
\end{equation}
This leaves two of the $C_3 = 5$ link patterns for which we do not provide explicit solutions (the asymmetric patterns). For each of the three pole sets above the canonical rational function is: 
\[
R(z) = z + \frac1{\zeta^4z} - \frac{z}{z^2-\zeta^2} \frac{(\zeta^2-a^2)(\zeta^2-1)(\zeta^2-1/a^2)}{2\zeta^4},
\]
where $\zeta$ is one of the non-zero elements of the set. In the next example we consider the canonical $R$ with poles \eqref{eq: 3R}. 
\begin{figure}[ht]
\begin{center}
\includegraphics[width=.3\textwidth]{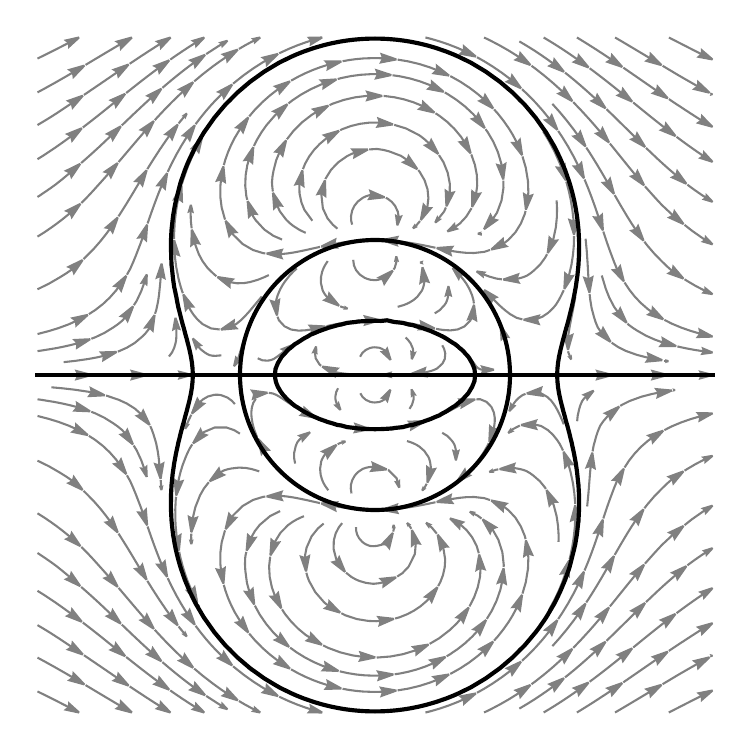}
\hspace{.03\textwidth}
\includegraphics[width=.3\textwidth]{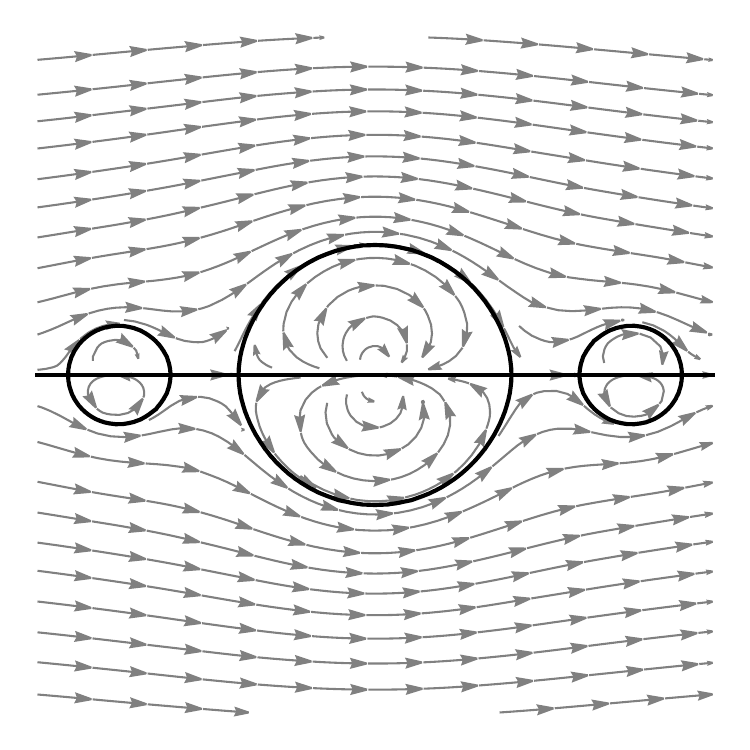}
\hspace{.03\textwidth}
\includegraphics[width=.3\textwidth]{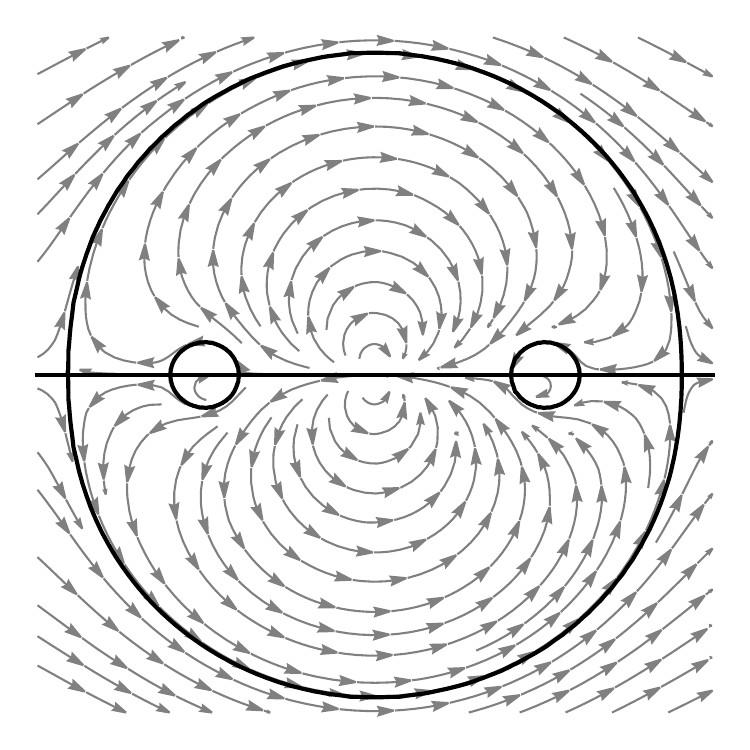} 
\end{center}
\caption{The real locus of $R$ and the flows of $1/R'$ for
canonical $R$ with poles \eqref{eq: 3R} (on the left), 
canonical $R$ with poles \eqref{eq: 3N}  (in the middle), and canonical $R$ with poles \eqref{eq: 3RN} (on the right).}
\label{figure: n=3}
\end{figure}

\begin{eg} \label{eg: n=3}
We consider an example with real locus $\Gamma$ invariant under $z\mapsto z^* = 1/\bar z$, i.e. $\Gamma = \Gamma^*$ or \eqref{eq: 3R}.
The canonical rational function 
\[
R(z) = z + \frac1z + \Big(a + \frac1a\Big)^2 \frac{z}{1+z^2}
\]
has the above prescribed critical points. A simple calculation shows that 
\[
\Im R(z) = (1-|z|^2)\Big(-\frac1{|z|^2}+\Big(a + \frac1a\Big)^2\frac1{|1+z^2|^2}\Big) \Im z .
\]
Thus its real locus in $\H$ is given by 
\begin{equation}\label{eq: n=3}
|z| = 1, \qquad \Big|z + \frac1z\Big| = a + \frac1a.
\end{equation}
For the evolution of $Q_t$ and $R_t$ of Theorem \ref{parabolic PDE} we have that $Q_t(z)=z(z^2+1-3t)$ satisfies the backward heat equation \eqref{eq: backward heat} and the rational function 
\[
R_t(z) = \frac{z^4-2(3t-1)z^2+3t^2-2t+1}{z(z^2+1-3t)}
+\Big(a+\frac{1}{a}\Big)^2 \frac{z^2-t}{z(z^2+1-3t)}  
\]
satisfies the parabolic PDE
\[
\dot R_t(z) = -\frac12 R_t''(z)- \frac{3z^2+1-3t}{z(z^2+1-3t)} R_t'(z).
\]

\end{eg}

\section{Concluding Remarks \label{sec: conclusion}}

The approach of this paper was motivated by our work in the forthcoming \cite{AKM}, in which we make a detailed study of the multiple SLE$(\kappa)$ process in the framework of the conformal field theory of \cite{KM13}. In \cite{AKM} the idea is to represent multi-point correlation functions as expected values of the Gaussian free field (GFF) and other (formal) random fields derived from the GFF by various operations, most notably \textbf{Wick products} and the so-called \textbf{operator product expansion}. An important part of the approach in \cite{AKM, KM13, KM} is that these random fields (even though they are only formally defined in a pointwise manner) come equipped with a rule for how they transform under change of coordinate charts. The transformation rule for each random field is inherited by its correlation function. This is the reason that we emphasize our solutions $Z_{\alpha}$ of \eqref{eq: Z} are differential forms and the $U_{\alpha}$ of \eqref{eq: U} are pre-pre-Schwarzians.

One of the main results of \cite{AKM} is that these correlation functions are martingale observables under the random Loewner evolution that generates the curves. The integral of motion in Theorem \ref{primitive} is a heuristic $\kappa \to 0$ limit of a \textbf{screened bosonic observable}. Briefly, this observable is created by inserting charges into the GFF at specific locations, taking an expectation to obtain a multi-point correlation function, and then integrating out some of the variables from this correlation function. We refer to this operation as the \textbf{method of screening}. In the CFT literature it usually goes by the name of the \textbf{Coulomb gas integrals} or \textbf{Dotsenko-Fateev integrals}, while in the SLE literature it was introduced by Dub\'{e}dat \cite{Dubedat06} under the name \textbf{Euler integrals}. In the language of CFT this method introduces auxiliary ``screening charges'' into the Coulomb gas. In the framework of \cite{AKM} this is accomplished by introducing the following charge distribution on the Riemann sphere:
\begin{align}\label{eq:charge}
\sum_{j=1}^{2n} a \delta_{x_j} - \sum_{k=1}^{n} 2a \delta_{\zeta_k} + 2b \delta_{\infty}, \quad a = \sqrt{2/\kappa}, \, b = \sqrt{\kappa/8} - \sqrt{2/\kappa}. 
\end{align}
Here $x_1, \ldots, x_{2n}$ are real and distinct, and $\zeta_1, \ldots, \zeta_n \in \C$ are distinct. At this point the $\zeta_i$ are arbitrary, although it is convenient to assume that they always appear in complex conjugate pairs. The special charge of $2b$ at infinity maintains the correct M\"{o}bius invariance of the system, which is a consequence of a version of the Gauss-Bonnet theorem (see \cite[Corollary 6.3]{KM:Riemann} for further explanation). Conformal field theory then associates a correlation function to this charge distribution:
\[
\mathcal{Z}_{\kappa}^{\pre}(\bfs x, \bfs \zeta) = \!\!\! \prod_{1 \leq i < j \leq 2n} \!\!\! (x_i - x_j)^{a^2} \!\!\! \prod_{1 \leq i < j \leq n} (\zeta_i - \zeta_j)^{4a^2} \prod_{i=1}^{2n} \prod_{j=1}^n (x_i - \zeta_j)^{-2a^2}.
\]
Importantly, $\mathcal{Z}_{\kappa}^{\pre}(\bfs x, \bfs \zeta)$ is also regarded as a \textit{differential} at the various coordinates rather than as a function. For our purposes this simply means that $Z_{\kappa}^{\pre}$ transforms under the conformal automorphisms of $\H$ as
\[
\mathcal{Z}_{\bfs \kappa}^{\pre}(\bfs x, \bfs \zeta) = \mathcal{Z}_{\bfs \kappa}^{\pre}(\phi(\bfs x), \phi(\bfs \zeta)) \prod_{x_i \in \bfs x} \phi'(x_i)^{\lambda_b(a)} \prod_{\zeta_k \in \bfs \zeta} \phi'(\zeta_k)^{\lambda_b(-2a)}, 
\]
where $\lambda_b : \R \to \R$ is given by $\lambda_b(x) = x^2/2 - xb$. The exponents $\lambda_b$ associated to each of the $\bfs x$ and $\bfs \zeta$ points are called \textit{conformal dimensions}. Moreover, from the explicit form of $\mathcal{Z}_{\kappa}^{\pre}(\bfs x, \bfs \zeta)$ one can derive a system of linear partial differential equations satisfied by $\mathcal{Z}_{\kappa}^{\pre}$ in the $\bfs x$ and $\bfs \zeta$ variables. At the $x_i$ points the PDEs are of second order while at the $\bfs \zeta$ points they are of first order. The conformal dimensions figure prominently in the coefficients of the various terms of the PDE. From a purely algebraic standpoint, the derivation of the system of PDEs is only based on the fact that $\mathcal{Z}_{\kappa}^{\pre}(\bfs x, \bfs \zeta)$ is of ``rational type''. By this we mean that it involves products of differences of the variables but the powers they are raised to are not necessarily integers. At most values of $\kappa$ this rational type function is multi-valued and infinitely ramified, making its analysis somewhat delicate. Ignoring these complications however, the next step in the method of screening is to choose closed contours $\mathcal{C}_1, \ldots, \mathcal{C}_n$ along which we may integrate out the $\bfs \zeta$ variables. The integration procedure is well-defined since the conformal dimension of $\mathcal{Z}_{\kappa}^{\pre}(\bfs x, \bfs \zeta)$ is $1$ at the $\zeta$ points, i.e. since $\lambda_b(-2a) = 1$. This leads to a new function of $\bfs x$ defined by
\begin{align}\label{eq:Zk_x_q}
\mathcal{Z}_{\kappa}(\bfs x) := \oint_{\mathcal{C}_1} \ldots \oint_{\mathcal{C}_n} \mathcal{Z}_{\kappa}^{\pre}(\bfs x, \bfs \zeta) \, d \zeta_n \ldots d \zeta_1.
\end{align}
The remarkable feature of this procedure is that, after the screening, the new functions $\mathcal{Z}_{\kappa}(\bfs x)$ solve the BPZ system \eqref{eq: BPZ} of partial differential equations. The solution implicitly depends on the choice of contours, and by varying this choice one can produce distinct solutions to \eqref{eq: BPZ}. We call solutions generated in this way \textit{screened solutions}. A rigorous analysis of their properties was carried out in the series of papers \cite{FK1, FK2, FK3, FK4} by Flores and Kleban. In \cite{FK3} Flores and Kleban showed that for each fixed $\bfs x$ there are $C_n$ distinct choices of the contours that lead to $C_n$ linearly independent solutions to the system of BPZ equations \eqref{eq: BPZ}, and that these solutions also solve $\kappa > 0$ versions of the conformal Ward identities \eqref{eq: CWI0}. In \cite{FK1, FK2} Flores and Kleban, using completely independent arguments from \cite{FK3}, show that there are at most $C_n$ linearly independent solutions to the system of BPZ equations plus the positive $\kappa$ conformal Ward identities. Hence the combination of their two main results proves that the dimension of the solution space to the system of $2n+3$ linear partial differential equations ($2n$ BPZ equations and $3$ conformal Ward equations) is exactly $C_n$. Other authors have obtained similar results using ideas from quantum groups \cite{KyPel:pure_partitions} or Brownian loop measure \cite{Lawler:part_functions_SLE, Lawler:PC, JL:smoothness}, but the Flores and Kleban approach is most similar to the CFT way of thinking that underlies the present paper. There is one small distinction that we make note of: the $\mathcal{Z}_{\kappa}^{\pre}$  introduced above is the focus of the forthcoming \cite{AKM}, which is a simplified and more symmetrical version of the analogous version that Flores and Kleban start from. 

Our underlying idea is to formally apply Laplace's method/the method of stationary phase to the integrals in these screened solutions. At least heuristically one would expect that most of the contribution to the integrals should come from neighborhoods of the critical points along each contour. The critical points are of the map $\bfs \zeta \mapsto \mathcal{Z}_{\kappa}^{\pre}(\bfs x, \bfs \zeta)$. Since $\mathcal{Z}_{\kappa}^{\pre}(\bfs x, \bfs \zeta)$ is of ``rational type'' and might be expected to degenerate to a rational function in the $\kappa \to 0$ limit, the obvious concentration points for the integrals are the poles of the rational function. Justifying the stationary phase argument seems to be complicated but the heuristic remains correct. This use of stationary phase is another reason why we refer to \eqref{eq: stationary} as a stationary relation. One can also view the functions $Z$ of Theorem \ref{thm: Z} as the appropriate $\kappa \to 0$ limit of the screened solutions $\mathcal{Z}_{\kappa}$ of the BPZ equations.

This same approach also leads to a heuristic explanation of how the flow line description of the real locus, as summarized in Section \ref{sec:locus_geodesic}, also naturally appears as a $\kappa \to 0$ limit of the Miller-Sheffield \textit{imaginary geometry} \cite{MS16:imaginary1,MS16:imaginary2,MS16:imaginary3, MS16:imaginary4}. In conformal field theory random vector fields (although formal and non-rigorous) naturally arise from the so-called \textit{vertex fields}. The vertex field that we consider takes the form
\[
\mathcal{V}^{(\sigma)}(z) = \frac{\mathcal{Z}_{\kappa}(\bfs x, z)}{\mathcal{Z}_{\kappa}(\bfs x)} \, e^{\odot i\sigma \Phi(z)},
\]
where $\mathcal{Z}_{\kappa}(\bfs x)$ is defined by \eqref{eq:Zk_x_q}, $\mathcal{Z}_{\kappa}(\bfs x, z)$ is defined below, $\sigma = 1/2b$ where $b$ is defined as in \eqref{eq:charge} (we explain this choice below), $\Phi$ is the Gaussian free field, and $e^{\odot i\sigma \Phi}$ is the Wick's exponential of ${i\sigma \Phi}$. Neither $\Phi(z)$ nor its Wick exponential make sense pointwise, but as this discussion is only a heuristic we do not concern ourselves with that fact. The Wick exponential term is the only random part of $\mathcal{V}^{(\sigma)}(z)$, while the prefactor is entirely deterministic. The Wick exponential transforms as a $(0,0)$-differential (i.e. as a function) while the choice of $\sigma = 1/2b$ is made so that the prefactor transforms as a vector field at $z$. More precisely, we build the $\mathcal{Z}_{\kappa}(\bfs x, z)$ as a screened correlation function of the charge distributions 
\[
\sum_{j=1}^{2n} a \delta_{x_j} - \sum_{k=1}^{n} 2a \delta_{\zeta_k} + 2b \delta_{\infty} + \sigma \delta_z - \sigma \delta_{\overline{z}}, \quad \sigma = \frac{1}{2b}.
\]
The pre-screened correlation function that conformal field theory associates to this charge distribution is
\begin{align*}
\frac{\mathcal{Z}_{\kappa}^{\pre}(\bfs x, \bfs \zeta, z)}{\mathcal{Z}_{\kappa}^{\pre}(\bfs x, \bfs \zeta)} &= (z - \overline{z})^{-\sigma^2} \prod_{j=1}^{2n} (z - x_j)^{a \sigma} (\overline{z} - x_j)^{-a \sigma} \prod_{k=1}^n (z - \zeta_k)^{-2a \sigma} (\overline{z} - \zeta_k)^{2a \sigma} \\
&= (z - \overline{z})^{-\sigma^2} \frac{\prod_j (z-x_j)^{2 a \sigma}}{\prod_{k} (z-\zeta_k)^{2a\sigma}(z-\overline{\zeta_k})^{2a\sigma}} \frac{\prod_k |z-\overline{\zeta_k}|^{4a \sigma}}{\prod_j |z-x_j|^{2a \sigma}}, 
\end{align*}
where the last equality uses the identities $w^{-1} \overline{w} = w^{-2} |w|^{2}$ and $2 b \sigma = 1$. Note that $\mathcal{Z}_{\kappa}^{\pre}(\bfs x, \bfs \zeta, z)$ is complex-valued, and at $z$ it has conformal dimensions $(\lambda_b(\sigma), \lambda_b^*(\sigma))$, where $\lambda_b^*(x) = x^2/2 + xb$. The important property of these dimensions is that $\lambda_b(\sigma) - \lambda_b^*(\sigma) = -2b\sigma = -1$, which means that $\mathcal{Z}_{\kappa}^{\pre}(\bfs x, \bfs \zeta, z)$ transforms as a vector field at $z$. Then we define $\mathcal{Z}_{\kappa}(\bfs x, z)$ to be the result of screening out the $\bfs \zeta$ variables, i.e. as in \eqref{eq:Zk_x_q} we define
\[
\mathcal{Z}_{\kappa}(\bfs x, z) := \oint_{\mathcal{C}_1} \ldots \oint_{\mathcal{C}_n} \mathcal{Z}_{\kappa}^{\pre}(\bfs x, \bfs \zeta, z) \, d \zeta_n \ldots d \zeta_1.
\]
Then $\mathcal{Z}_{\kappa}(\bfs x, z)$ remains a vector field in the $z$ variable, and as $\kappa \to 0$ we expect that steepest descent takes over and the integrals concentrate along the poles of a rational function. In other words, we expect that 
\[
\frac{\mathcal{Z}_{\kappa}(\bfs x, z)^\kappa}{\mathcal{Z}_{\kappa}(\bfs x)^\kappa} \sim  \frac{\mathcal{Z}_{\kappa}^{\pre}(\bfs x, \bfs \zeta_R(\bfs x), z)}{\mathcal{Z}_{\kappa}^{\pre}(\bfs x, \bfs \zeta_R(\bfs x))} \quad \textrm{as } \kappa \to 0
\]
where $\bfs \zeta_R(\bfs x)$ is the pole set of some $R \in \CRR_{n+1}(\bfs x)$. Since $\sigma = 1/2b \to 0$ and $a\sigma \sim -b\sigma$ as $\kappa \to 0$ this suggests that
\[
\frac{\mathcal{Z}_{\kappa}^{\pre}(\bfs x, \bfs \zeta_R(\bfs x),  z)}{\mathcal{Z}_{\kappa}^{\pre}(\bfs x, \bfs \zeta_R(\bfs x))} \sim  \frac {\prod_k (z-\zeta_k)(z-\overline{\zeta_k})}{\prod_j (z-x_j)} \frac{\prod_j |z - x_j|}{\prod_k |z - \overline{\zeta_k}|^2} = \frac{|R'(z)|}{R'(z)} \quad \textrm{as } \kappa \to 0.
\]
The limit on the right hand side is a locally scaled version of the vector field $v_R = 1/R'(z)$, and therefore has the same flow lines as $v_R$ itself. Thus we are led to the conclusion that the flow lines of $\mathcal{V}^{(\sigma)}(z)$, which are the key object of study in the Miller-Sheffield imaginary geometry, become the flow lines of $1/R'(z)$ in the $\kappa \to 0$ limit.

Given Flores and Kleban's results on the dimension of the solution space for the BPZ equations \eqref{eq: BPZ}, it is interesting to ask about the structure of the solution set to the null vector equations \eqref{eq: NV0}. More precisely we consider both systems together with the conformal Ward identities. For BPZ this means a system of $2n+3$ linear PDEs, while for the null vector equations it means the system of $2n$ quadratic equations \eqref{eq: NV0} plus the $3$ linear equations \eqref{eq: CWI0}. Here ``quadratic'' and ``linear'' are with respect to the $U_j$ variables, since we prefer to think of the $U_j$ as unknown variables with the $\bfs x = \{ x_1, \ldots, x_{2n} \}$ regarded as known parameters. For each fixed choice of $\bfs x$ we expect that there are exactly $C_n$ different solutions $(U_j : j=1,\ldots,2n)$ to the null vector + conformal Ward system. However this solution set should be discrete, in contrast to the BPZ + conformal Ward system where the solution space is linear of dimension $C_n$. That the solution space of the null vector + conformal Ward system has at least $C_n$ points is already proved in \cite{PW20}, who construct solutions for each of the $C_n$ possible link patterns. It also follows from our Theorem \ref{thm: Z} that the $U_{\alpha,j}$ of \eqref{eq: U} satisfy the system, \textit{if} we use the existence of $C_n$ distinct equivalence class in $\CRR_{n+1}(\bfs x)$ coming from any one of \cite{EG02, MTV:Shapiro, EG11, PW20}. However it does not seem to follow directly from these results that the null vector + conformal Ward system cannot contain \textit{more} than $C_n$ solutions. Likely it is true that from any solution $(U_j : j=1,\ldots,2n)$ one could construct a real rational function of degree $n+1$ and with critical points precisely at $\bfs x$, which would in turn imply that it must be one of the $C_n$ solution points already given. We have not been able to obtain this construction though.

A related problem is the enumeration of the non-equivalent solutions to the stationary relation \eqref{eq: stationary}, for each fixed collection of $\bfs x$ points. By Theorem \ref{thm: stationary} each solution set $\bfs \zeta$ of the stationary relation (that additionally satisfies $\bfs x \cap \bfs \zeta = \emptyset$) is the pole set of some $R \in \CRR_{n+1}(\bfs x)$. Two sets of solutions $(\zeta_1, \ldots, \zeta_{n+1})$ and $(\zeta_1^o, \ldots, \zeta_{n+1}^o)$ are equivalent if one is the pole set of a post-composition of the rational function associated to the other solution set. The existence of $C_n$ distinct equivalence classes in $\CRR_{n+1}(\bfs x)$ proves that there are at least $C_n$ non-equivalent solutions to the stationary relation. The matching upper bound also follows from Theorem \ref{thm: stationary} and Goldberg's upper bound \cite{Goldberg91}. However it seems that the stationary relation plays at most an indirect role in the statements or results of \cite{Goldberg91, EG02, MTV:Shapiro, EG11, PW20}. See \cite[eqn.~(2.2)]{MTV:Shapiro} for one appearance, although the distinction between poles and critical points is not so apparent there. It would be interesting to have an enumeration method that starts directly from \eqref{eq: stationary}.  

We expect that there are many interesting connections between Calogero-Moser and SLE type processes still to be discovered. In the physics literature Cardy and Doyon \cite{Cardy04, CD07} suggested a connection between the \textit{quantum} Calogero-Moser-Sutherland system and multiple \textit{radial} SLE$(\kappa)$ curves. Interestingly, in one of his last works Feynman \cite{POLY2019} also studied the Calogero-Moser system by taking classical limits of constructions originating in conformal field theory.

\providecommand{\bysame}{\leavevmode\hbox to3em{\hrulefill}\thinspace}
\providecommand{\MR}{\relax\ifhmode\unskip\space\fi MR }
\providecommand{\MRhref}[2]{%
  \href{http://www.ams.org/mathscinet-getitem?mr=#1}{#2}
}
\providecommand{\href}[2]{#2}


\end{document}